\newtheorem{theorem}{Theorem}[section]
\newtheorem{proposition}[theorem]{Proposition}
\newtheorem{corollary}[theorem]{Corollary}
\newtheorem{lemma}[theorem]{Lemma}
\theoremstyle{example}
\newtheorem{example}[theorem]{Example}
\theoremstyle{remark}
\newtheorem{remark}[theorem]{Remark}
\newtheorem{remarks}[theorem]{Remarks}
\theoremstyle{definition}
\newtheorem{definition}{Definition}[section]
\begin{document}
\title[Orbital free pressure and Its Legendre transform]{Orbital free pressure and its Legendre transform}
\author[F.~Hiai]
{Fumio Hiai}
\address{(F.H.) Tohoku University (Emeritus),
Hakusan 3-8-16-303, Abiko 270-1154, Japan}
\email{hiai.fumio@gmail.com}
\author[Y.~Ueda]
{Yoshimichi Ueda$\,^{1}$}
\address{(Y.U.) Graduate School of Mathematics, 
Kyushu University, 
Fukuoka, 819-0395, Japan}
\email{ueda@math.kyushu-u.ac.jp}
\thanks{$^{1}$Supported in part by Grant-in-Aid for Scientific Research (C) 24540214.}
\thanks{AMS subject classification: Primary:\,46L54; secondary:\,94A17.}
\thanks{Keywords:\,free probability; free entropy; free pressure; Legendre transform; orbital free entropy; liberation theory.}
\maketitle

\begin{abstract} Orbital counterparts of the free pressure and its Legendre transform (or $\eta$-entropy) are introduced and studied in comparison with other entropy quantities in free probability theory and in relation to random multi-matrix models. 
\end{abstract}

\allowdisplaybreaks{

\section{Introduction} 

Voiculescu \cite{Voiculescu:AdvMath99} introduced the \emph{free mutual information} $i^*$ for tuples of unital $*$-subalgebras of a tracial $W^*$-probability space, based on the so-called liberation processes and an appropriate theory of derivations similarly to the microstate-free approach $\chi^*$ to free entropy. Hence it is natural to regard the free mutual information $i^*$ as the microstate-free definition of free mutual information. 
A decade later in \cite{HiaiMiyamotoUeda:IJM09}, we and Miyamoto introduced the \emph{orbital free entropy} $\chi_\mathrm{orb}$ for tuples $(\mathbf{X}_1,\dots,\mathbf{X}_n)$ of self-adjoint random multi-variables $\mathbf{X}_i$ in a tracial $W^*$-probability space, based on an appropriate notion of microstates called \emph{orbital microstates}. Here, a self-adjoint random multi-variable means a tuple of (finitely many) self-adjoint random variables. Note that our first work \cite{HiaiMiyamotoUeda:IJM09} could deal with only the situation that each $\mathbf{X}_i$ generates a hyperfinite von Neumann subalgebra, but the later one \cite{Ueda:IUMJ1x}, motivated by the important contribution \cite{BianeDabrowski:AdvMath13}, extended the orbital free entropy $\chi_\mathrm{orb}$ itself with its properties and its dimension counterpart $\delta_{\mathrm{orb},0}$ to the general situation. Furthermore, several alternative approaches to orbital free entropy, $\tilde{\chi}_\mathrm{orb}$ etc., were proposed by Biane and Dabrowski \cite{BianeDabrowski:AdvMath13} as byproducts of their idea of random microstates in order to provide paths to solving at least one of the major problems in the direction. Let us briefly explain those major problems in the next paragraph to make the current status clear for the reader's convenience. 

The free mutual information $i^*$ and the minus orbital free entropy $-\chi_\mathrm{orb}$ have many natural properties in common, and hence $-\chi_\mathrm{orb}$ seems a strong candidate of microstate counterpart of $i^*$. However, only a few results are known so far about relations among them and two approaches $\chi$, $\chi^*$ of free entropy. In fact, the most desired identity $\chi(\mathbf{X}_1,\dots,\mathbf{X}_n) = \chi_\mathrm{orb}(\mathbf{X}_1,\dots,\mathbf{X}_n) + \sum_{i=1}^n \chi(\mathbf{X}_i)$ is known to hold only when every $\mathbf{X}_i$ is a singleton (see \cite[Theorem 2.6]{HiaiMiyamotoUeda:IJM09}), and moreover no analogous property for $\chi^*$ and $i^*$ is known. This issue is currently one of two major problems in the study of orbital free entropy $\chi_\mathrm{orb}$, and the other is the unification question between $i^*$ and $-\chi_\mathrm{orb}$ (see \cite[\S1]{IzumiUeda:Preprint13} for more on this latter problem). Besides these fundamental problems we do not yet know whether or not all the existing microstate approaches or definitions in \cite{BianeDabrowski:AdvMath13,Ueda:IUMJ1x} coincide without any assumption; hence we (and probably the authors of \cite{BianeDabrowski:AdvMath13} too) think that the `final form' of definition of orbital free entropy $\chi_\mathrm{orb}$ is not yet established. Moreover, it is very difficult to calculate the orbital free entropy $\chi_\mathrm{orb}$ (as well as the free mutual information $i^*$) for concrete, non-trivial examples. The latter of these two issues initially motivated us to do the present work.  

The main purpose of the present paper is to introduce and develop a possible Legendre transform approach to the orbital free entropy $\chi_\mathrm{orb}$ in order to deepen our understanding of $\chi_\mathrm{orb}$ (as well as its relation with the microstate approach $\chi$ to free entropy). The resulting quantity turns out not to agree with the orbital free entropy $\chi_\mathrm{orb}$ in general due to its character of concavity, but shares all the natural properties with $\chi_\mathrm{orb}$. We would like to emphasize that the proposed approach here enables one to investigate the orbital free entropy $\chi_\mathrm{orb}$ by analogy to statistical mechanics with a flavor similar to quantum spin systems \cite[Chapter 6]{BratteliRobinson:Book2}. 

Let us clarify our strategy and our results more precisely. The strategy here is to follow an idea from statistical mechanics, and this is analogous to the previous work \cite{Hiai:CMP05} of the first-named author. Namely, we first introduce a kind of pressure function, called the \emph{orbital free pressure} $\pi_{\mathrm{orb},R}$, in the sprit of defining the orbital free entropy $\chi_\mathrm{orb}$, and then consider its (minus) Legendre transform. Hence the resulting quantity fits into convex analysis and is primarily defined for tracial states on a certain universal $C^*$-algebra depending on a fixed cut-off constant. However, we see (Theorem \ref{T3.4}) that it does not depend on the possible choice of a cut-off constant while this independence is not known for the previous \emph{$\eta$-entropy} $\eta_R$ introduced in \cite{Hiai:CMP05}, and it turns out to become a new variant of the orbital free entropy. We will call it the \emph{orbital $\eta$-entropy} $\eta_\mathrm{orb}$, defined for tuples $(\mathbf{X}_1,\dots,\mathbf{X}_n)$ of self-adjoint random multi-variables as above. In fact, we see (Theorem \ref{T3.5}) that the orbital $\eta$-entropy $\eta_\mathrm{orb}$ shares all the natural properties with the orbital free entropy $\chi_\mathrm{orb}$. The inequality $\chi_\mathrm{orb}\le\eta_\mathrm{orb}$ holds in general, though $\eta_\mathrm{orb}$ is not necessarily equal to $\chi_\mathrm{orb}$ (see Remark \ref{R3.6}) by the concavity as mentioned before. A systematic study of $\pi_{\mathrm{orb},R}$ and $\eta_\mathrm{orb}$ including the above-mentioned results is carried out in \S2 and \S3, and the purpose of these sections and the later \S5 is to establish the foundation of the Legendre transform approach here. In the next \S4 we introduce and study the notion of `orbital equilibrium' tracial states. In particular, in \S\S4.2 we give a sufficient condition for a given tracial state to be orbital equilibrium, which will be used in \S7 where we will point out (in Example 7.1) an explicit relationship between a computation in the proof of \cite[Theorem 8.1]{CollinsGuionnetSegala:AdvMath09} and the orbital free entropy $\chi_\mathrm{orb}$. Indeed, the proof of \cite[Theorem 8.1]{CollinsGuionnetSegala:AdvMath09} contains a `prototype' of the orbital free pressure and it is
one of our motivations of the present work. Moreover, in \S\S4.3 we find several connections of the orbital $\eta$-entropy $\eta_\mathrm{orb}$ to the microstate approach $\chi$ of free entropy and the $\eta$-entropy $\eta_R$ based on the notion of (orbital) equilibrium. In \S5 we prove (Theorem \ref{T5.1}) that the free independence of $\mathbf{X}_1,\dots,\mathbf{X}_n$ (together with the f.d.a.\ assumption) is equivalent to $\eta_\mathrm{orb}(\mathbf{X}_1,\dots,\mathbf{X}_n) = 0$, where the key ingredient is a transportation cost inequality similarly to the case of $\chi_\mathrm{orb}$ in \cite{HiaiMiyamotoUeda:IJM09}. In \S6 we give a representation of the orbital free entropy $\chi_\mathrm{orb}$ as the Legendre transform of a certain modification of the pressure function introduced in \S2. This is again analogous to the representation in \cite[\S6]{Hiai:CMP05}. Note that such a representation is important as explained in \cite[\S7]{Guionnet:LNM09}. In the final \S7 we discuss random matrix models studied in \cite{CollinsGuionnetSegala:AdvMath09}. We see that random matrix models in \cite{CollinsGuionnetSegala:AdvMath09} under some conditions indeed provide orbital equilibrium tracial states in a suitable manner. 

Many existing quantities mentioned above will be used throughout the paper, but we would not like to repeat their definitions here. Instead we simply refer the reader to suitable references as follows: \cite{Voiculescu:Survey} for two approaches $\chi, \chi^*$ to free entropy,  \cite{Hiai:CMP05} for the free pressure $\pi_R$ and the $\eta$-entropy $\eta_R$, \cite{HiaiMiyamotoUeda:IJM09,Ueda:IUMJ1x} for the orbital free entropy $\chi_\mathrm{orb}$, and finally \cite{Voiculescu:AdvMath99} for the liberation theory. We will use the following notations: 
For a given unital $C^*$-algebra $\mathcal{A}$ we denote by $\mathcal{A}^{sa}$ the set of all self-adjoint elements in $\mathcal{A}$ and by $TS(\mathcal{A})$ the tracial states on $\mathcal{A}$.    
For $N\in\mathbb{N}$, $M_N(\mathbb{C})$ is the $N\times N$ complex matrix algebra, $M_N(\mathbb{C})^{sa}$ is the set of $N\times N$ self-adjoint matrices, and $\Lambda_N$ is the Lebesgue measure on $M_N(\mathbb{C})^{sa} \cong \mathbb{R}^{N^2}$. We write
$M_N(\mathbb{C})_R^{sa}$ for the set of $A\in M_N(\mathbb{C})^{sa}$ with the operator
norm $\|A\|_\infty\le R$. The normalized trace on $M_N(\mathbb{C})$ is denoted by
$\mathrm{tr}_N$. Furthermore, $\mathrm{U}(N)$ and $\mathrm{SU}(N)$ are the unitary group and the special unitary group of order $N$ with the
Haar probability measures $\gamma_{\mathrm{U}(N)}$ and $\gamma_{\mathrm{SU}(N)}$, respectively.

\section{Orbital free pressure}

Let $\mathbf{x}_i = (x_{ij})_{j=1}^{r(i)}$, $1 \leq i \leq n$, be non-commutative
multi-indeterminates, and let $R > 0$ be given. We define $\mathcal{C}_R(\mathbf{x}_i)$ to
be the universal $C^*$-free product of $r(i)$  copies of $C[-R,R]$, the $C^*$-algebra of
continuous complex functions on $[-R,R]$, with the identification $x_{ij}(t) := t$ in
the $j$th copy of $C[-R,R]$. (Recall that the universal $C^*$-algebra generated by a single element $x=x^*$ with $\|x\|=R$ is isomorphic to $C[-R,R]$ with the identification
$x(t)=t$.) Write $\mathbf{x} := \mathbf{x}_1\sqcup\cdots\sqcup\mathbf{x}_n$ for simplicity, and
define $\mathcal{C}_R(\mathbf{x})$ to be the universal $C^*$-free product of the
$\mathcal{C}_R(\mathbf{x}_i)$, $1 \leq i \leq n$, i.e.,
\begin{equation}\label{F1}
\mathcal{C}_R(\mathbf{x})
=\bigstar_{i=1}^n\mathcal{C}_R(\mathbf{x}_i)
=\bigstar_{i=1}^n\bigl(C[-R,R]^{\star r(i)}\bigr).
\end{equation}
Denote by $\Vert-\Vert_R$ the $C^*$-norm on those $C^*$-algebras throughout.

For a given tracial state
$\tau \in TS(\mathcal{C}_R(\mathbf{x}_i))$ and $N,m\in\mathbb{N}$ and $\delta>0$ we define
$\Gamma_R(\tau;N,m,\delta)$ to be the set $\Gamma_R(\mathbf{x}_i;N,m,\delta)$  of microstates with regarding $\mathbf{x}_i$ as a tuple of random variables in the tracial probability space
$(\mathcal{C}_R(\mathbf{x}_i),\tau)$, i.e., the set of
$(A_j)_{j=1}^{r(i)}\in (M_N(\mathbb{C})_R^{sa})^{r(i)}$ such that
$$
|\mathrm{tr}_N(A_{j_1}\cdots A_{j_l})-\tau(x_{ij_1}\cdots x_{ij_l})|<\delta
$$
holds whenever $1 \leq j_k \leq r(i)$, $1 \leq k \leq l$ and $l\le m$. As noted in \cite[p.232--233]{Hiai:CMP05},
for each $\mathbf{a} = (\mathbf{a}_i)_{i=1}^n$ with tuples $\mathbf{a}_i = (a_{ij})_{j=1}^{r(i)}$ of self-adjoint elements in a unital $C^*$-algebra $\mathcal{A}$ and $\|a_{ij}\|\le R$, there is a canonical
$*$-homomorphism $f \in \mathcal{C}_R(\mathbf{x}) \mapsto f(\mathbf{a}) \in \mathcal{A}$ uniquely
determined by the correspondence $x_{ij} \mapsto a_{ij}$. Similarly, we have
$*$-homomorphisms $g\in\mathcal{C}_R(\mathbf{x}_i) \mapsto g(\mathbf{a}_i)\in\mathcal{A}$
for $1\le i\le n$. In particular, we have $*$-homomorphisms
$h\in \mathcal{C}_R(\mathbf{x}_i) \mapsto h(\mathbf{A}_i)\in M_N(\mathbb{C})$ for
$\mathbf{A}=(\mathbf{A}_i)_{i=1}^n$ with
$\mathbf{A}_i=(A_{ij})_{i=1}^{r(i)}\in(M_N(\mathbb{C})_R^{sa})^{r(i)}$.

\begin{definition}\label{D2.1}
Let $\tau_i \in TS(\mathcal{C}_R(\mathbf{x}_i))$, $1 \leq i \leq n$, be given, and
$h \in \mathcal{C}_R(\mathbf{x})^{sa}$ be also given. For each $N,m\in\mathbb{N}$ and $\delta>0$ we define
\begin{align} 
&\pi_{\mathrm{orb},R}(h:(\tau_i)_{i=1}^n\,;N,m,\delta) \nonumber\\
&:= 
\sup_{(\mathbf{A}_i)_{i=1}^n \in \prod_{i=1}^n\Gamma_R(\tau_i\,;N,m,\delta)} 
\log \int_{\mathrm{U}(N)^n} d\gamma_{\mathrm{U}(N)}^{\otimes n}(V_i)
\exp\bigl(-N^2\mathrm{tr}_N(h((V_i\mathbf{A}_i V_i^*)_{i=1}^n))\bigr), \label{F2}
\end{align}
where the supremum should be read $-\infty$ when $\Gamma_R(\tau_i\,;N,m,\delta) =\emptyset$ for some $1 \leq i \leq n$. Moreover, we define
\begin{align*}
&\pi_{\mathrm{orb},R}(h:(\tau_i)_{i=1}^n\,;m,\delta) 
:= 
\limsup_{N\to\infty} \frac{1}{N^2}\,\pi_{\mathrm{orb},R}
(h:(\tau_i)_{i=1}^n\,;N,m,\delta), \\
&\pi_{\mathrm{orb},R}(h:(\tau_i)_{i=1}^n) 
:= 
\lim_{m\to\infty\atop\delta\searrow0} \pi_{\mathrm{orb},R}(h:(\tau_i)\,;m,\delta)
= \inf_{m \in \mathbb{N}\atop\delta>0} \pi_{\mathrm{orb},R}(h:(\tau_i)\,;m,\delta).
\end{align*}
We call $\pi_{\mathrm{orb},R}(h:(\tau_i)_{i=1}^n)$ the \emph{orbital free pressure of $h$} 
relative to $(\tau_i)_{i=1}^n$. 
\end{definition}

For $\tau_i\in TS(\mathcal{C}_R(\mathbf{x}_i))$ we say that $\tau_i$ has
\emph{finite-dimensional approximants} (f.d.a.\ for short) \cite[Definition 3.1]{Voiculescu:IMRN98} if, for every $m\in\mathbb{N}$
and $\delta>0$, $\Gamma_R(\tau_i;N,m,\delta)\ne\emptyset$ for some $N\in\mathbb{N}$. The
next lemma is well-known and the proof is left to the reader.

\begin{lemma}\label{L2.1}
The following conditions for $\tau_i\in\mathcal{C}_R(\mathbf{x}_i)$ are equivalent{\rm:}
\begin{itemize}
\item[(i)] $\tau_i$ has f.d.a.{\rm;}
\item[(ii)] for every $m\in\mathbb{N}$ and $\delta>0$,
$\Gamma_R(\tau_i;N,m,\delta)\ne\emptyset$ for all sufficiently large $N\in\mathbb{N}${\rm;}
\item[(iii)] there exists a sequence of microstates
$\Xi(N)=(\xi_j(N))_{j=1}^{r(i)}\in(M_N(\mathbb{C})_R^{sa})^{r(i)}$, $N\in\mathbb{N}$, such that $\lim_{N\to\infty}\mathrm{tr}_N(p(\Xi(N)))=\tau_i(p)$ for every
non-commutative polynomial $p$ in $\mathbf{x}_i$.
\end{itemize}
\end{lemma}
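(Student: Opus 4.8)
The plan is to establish the cyclic chain of implications (iii)$\Rightarrow$(ii)$\Rightarrow$(i)$\Rightarrow$(iii). Of these, (ii)$\Rightarrow$(i) is immediate from the definitions, and (iii)$\Rightarrow$(ii) is routine: for fixed $m\in\mathbb{N}$ and $\delta>0$ there are only finitely many words $x_{ij_1}\cdots x_{ij_l}$ of length $l\le m$, and applying the hypothesized convergence $\mathrm{tr}_N(p(\Xi(N)))\to\tau_i(p)$ to each of them shows that $|\mathrm{tr}_N(x_{ij_1}\cdots x_{ij_l}(\Xi(N)))-\tau_i(x_{ij_1}\cdots x_{ij_l})|<\delta$ holds simultaneously for all such words once $N$ is large, i.e.\ $\Xi(N)\in\Gamma_R(\tau_i;N,m,\delta)$ for all sufficiently large $N$. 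So the real content is (i)$\Rightarrow$(iii).

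For (i)$\Rightarrow$(iii) the obstacle is that (i) only furnishes a microstate in \emph{some} dimension for each tolerance, whereas (iii) demands a microstate in \emph{every} dimension $N$. The standard device to bridge this gap is an \emph{inflation}: given $A=(A_1,\dots,A_{r(i)})\in(M_{N_0}(\mathbb{C})_R^{sa})^{r(i)}$ and $N\ge N_0$, write $N=qN_0+s$ with $0\le s<N_0$ and set $\widetilde{A}_j:=A_j^{\oplus q}\oplus 0_s\in M_N(\mathbb{C})_R^{sa}$. Since the $\widetilde{A}_j$ are simultaneously block diagonal, any word $w$ of length $l\ge 1$ satisfies $\mathrm{tr}_N(w(\widetilde{A}))=\tfrac{qN_0}{N}\,\mathrm{tr}_{N_0}(w(A))$, and hence
$$
|\mathrm{tr}_N(w(\widetilde{A}))-\mathrm{tr}_{N_0}(w(A))|=\tfrac{s}{N}\,|\mathrm{tr}_{N_0}(w(A))|\le \tfrac{N_0}{N}\,C_l,
\qquad C_l:=\max\{1,R\}^l .
$$
Therefore, if $A\in\Gamma_R(\tau_i;N_0,m,\delta)$ then $\widetilde{A}\in\Gamma_R(\tau_i;N,m,\delta+\tfrac{N_0}{N}C_m)$ for every $N\ge N_0$.

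With this in hand I would finish by a diagonal selection. Using (i), for each $m$ pick $N_m$ and $A(m)\in\Gamma_R(\tau_i;N_m,m,1/m)$, and choose thresholds $M_1<M_2<\cdots$ with $M_m\ge\max\{N_m,\lceil mN_mC_m\rceil\}$ (permissible, since the inflation estimate above holds for \emph{all} $N\ge N_m$, so the thresholds may be enlarged freely). Define $\Xi(N)$ to be anything, say all zeros, for $N<M_1$, and for $M_m\le N<M_{m+1}$ let $\Xi(N)$ be the inflation of $A(m)$ to $M_N(\mathbb{C})$; then $\Xi(N)\in\Gamma_R(\tau_i;N,m,2/m)$. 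For a non-commutative polynomial $p=\sum_w c_w w$ in $\mathbf{x}_i$ of degree $d$ and any $N\ge M_d$ one has $N\in[M_m,M_{m+1})$ for some $m\ge d$, so $|\mathrm{tr}_N(w(\Xi(N)))-\tau_i(w)|<2/m$ for every word $w$ of length $\le d$, giving $|\mathrm{tr}_N(p(\Xi(N)))-\tau_i(p)|\le (2/m)\sum_w|c_w|$, which tends to $0$ as $N\to\infty$ because $m=m(N)\to\infty$. This yields (iii). I expect the inflation step—checking that padding a microstate by block copies of itself perturbs moments only by $O(N_0/N)$—to be the single point requiring care; the rest is bookkeeping.
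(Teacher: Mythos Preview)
Your argument is correct. The inflation estimate is right (for a word $w$ of length $l\ge1$ one has $w(\widetilde A)=w(A)^{\oplus q}\oplus0_s$, hence $\mathrm{tr}_N(w(\widetilde A))=\tfrac{qN_0}{N}\,\mathrm{tr}_{N_0}(w(A))$, and the empty word causes no trouble since $\mathrm{tr}_N(1)=1=\tau_i(1)$), and the diagonal selection with thresholds $M_m$ does what you claim. The paper itself omits the proof, declaring the lemma well-known and leaving it to the reader; your write-up supplies exactly the standard argument one would expect, so there is nothing to compare beyond noting that you have filled in the elided details.
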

%\begin{proof}
%It is trivial that (iii) $\Rightarrow$ (ii) $\Rightarrow$ (i). Assume (i) and let $m \in \mathbb{N},\delta > 0$ be arbitrary. Then there exist $N_0\in\mathbb{N}$ and $\mathbf{A} = (A_j)_{j=1}^{r(i)} \in (M_{N_0}(\mathbb{C})_R^{sa})^{r(i)}$ such that $|\mathrm{tr}_{N_0}(p(\mathbf{A})) - \tau(p(\mathbf{x}_i))| < \delta$ holds for every monomial $p$ in $\mathbf{x}_i$ of degree not greater than $m$. Let $\delta'$ be the maximum of $|\mathrm{tr}_{N_0}(p(\mathbf{A})) - \tau(p)|$ over all those monomials $p$, which must be strictly smaller than $\delta$. For each $N = nN_0 + l$ with $0 \leq l \leq N_0 -1$, define $A_j^{(N)} := (\bigoplus^n A_j)\oplus O_{l\times l} \in M_N(\mathbb{C})^{sa}_R$, $ 1 \leq j \leq r(i)$, and set $\mathbf{A}^{(N)} := (A_j^{(N)})_{j=1}^{r(i)}$. Then we easily see that $|\mathrm{tr}_N(p(\mathbf{A}^{(N)}) - \tau(p)| \leq \delta' + R^m/n$. Therefore, $\mathbf{A}^{(N)}$ falls in $\Gamma_R(\tau_i\,;N,m,\delta)$ as long as $N \geq n_0 N_0$ with $n_0 > R^m/(\delta-\delta')$. Hence (ii) follows.

%Assume (ii); then for each $m\in\mathbb{N}$ there is an $N_m\in\mathbb{N}$ so that $\Gamma_R(\tau_i;N,m,1/m)\ne\emptyset$ for all $N\ge N_m$.  We may assume that $N_1<N_2<\dots$, and choose $\Xi(N)\in\Gamma_R(\tau_i;N,m,1/m)$ for $N_m\le N<N_{m+1}$. This gives assertion (iii).
%\end{proof}

Basic properties of the orbital free pressure are in the following:
 
\begin{proposition}\label{P2.2} The orbital free pressure
$\pi_{\mathrm{orb},R}(h:(\tau_i)_{i=1}^n)$ enjoys the following properties{\rm:}
\begin{itemize}
\item[(1)] If one of the $\tau_i$ does not have f.d.a., then
$\pi_{\mathrm{orb},R}(h:(\tau_i)_{i=1}^n)=-\infty$ for all
$h\in\mathcal{C}_R(\mathbf{x})^{sa}$.
\item[(2)] Assume that all $\tau_i$ have f.d.a. Then, for every
$h_1, h_2 \in \mathcal{C}_R(\mathbf{x})^{sa}$ one has
$$
|\pi_{\mathrm{orb},R}(h_1:(\tau_i)_{i=1}^n) - \pi_{\mathrm{orb},R}(h_2:(\tau_i)_{i=1}^n)| \leq \Vert h_1 - h_2\Vert_R.
$$
In particular, $\pi_{\mathrm{orb},R}(h:(\tau_i)_{i=1}^n)\in[-\|h\|_R,\|h\|_R]$ for every
$h \in \mathcal{C}_R(\mathbf{x})^{sa}$.
\item[(3)] If $h_1, h_2 \in \mathcal{C}_R(\mathbf{x})^{sa}$ and $h_1 \leq h_2$, then $\pi_{\mathrm{orb},R}(h_1:(\tau_i)_{i=1}^n) \geq \pi_{\mathrm{orb},R}(h_2:(\tau_i)_{i=1}^n)$.
\item[(4)] $h \in \mathcal{C}_R(\mathbf{x})^{sa} \mapsto \pi_{\mathrm{orb},R}(h:(\tau_i)_{i=1}^n)$ is convex.  
\item[(5)] For $1\le n'<n$ and for every $h_1 \in \mathcal{C}_R(\bigsqcup_{i=1}^{n'}\mathbf{x}_i)^{sa}$ and
$h_2 \in \mathcal{C}_R(\bigsqcup_{i=n'+1}^n\mathbf{x}_i)^{sa}$ one has
$$
\pi_{\mathrm{orb},R}(h_1+h_2:(\tau_i)_{i=1}^n) \leq 
\pi_{\mathrm{orb},R}(h_1:(\tau_i)_{i=1}^{n'})
+ \pi_{\mathrm{orb},R}(h_2:(\tau_i)_{i=n'+1}^n). 
$$
\end{itemize}
\end{proposition}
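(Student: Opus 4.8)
The plan is to prove each of the five items by combining a single elementary estimate on the integrand at a fixed microstate tuple $\mathbf{B}=(V_i\mathbf{A}_i V_i^*)_{i=1}^n$ with the three successive operations in Definition~\ref{D2.1}: the supremum over $\prod_i\Gamma_R(\tau_i;N,m,\delta)$, the $\limsup_N$ after division by $N^2$, and the limit $m\to\infty$, $\delta\searrow0$ (an infimum, since the quantities decrease as $m$ grows or $\delta$ shrinks). Throughout I use only three structural facts: (a) for $V_i\in\mathrm{U}(N)$ the evaluation $f\mapsto f(\mathbf{B})$ is a unital $*$-homomorphism $\mathcal{C}_R(\mathbf{x})\to M_N(\mathbb{C})$, hence norm-contractive and order-preserving, and $|\mathrm{tr}_N(c)|\le\|c\|_\infty$ for $c\in M_N(\mathbb{C})^{sa}$; (b) $\gamma_{\mathrm{U}(N)}^{\otimes n}$ is a product measure; and (c) Lemma~\ref{L2.1}, so that under f.d.a.\ one has $\Gamma_R(\tau_i;N,m,\delta)\ne\emptyset$ for all large $N$, which makes all the quantities finite, in $[-\|h\|_R,\|h\|_R]$ (the bound coming from $|\mathrm{tr}_N(h(\mathbf{B}))|\le\|h\|_R$).

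For (1): if $\tau_{i_0}$ fails f.d.a., pick $m_0,\delta_0$ with $\Gamma_R(\tau_{i_0};N,m_0,\delta_0)=\emptyset$ for every $N$; then $\pi_{\mathrm{orb},R}(h:(\tau_i);N,m_0,\delta_0)=-\infty$ for every $N$, so $\pi_{\mathrm{orb},R}(h:(\tau_i);m_0,\delta_0)=-\infty$, and the infimum over $(m,\delta)$ is $-\infty$. For (2)--(3) the key estimate is $\|h_1(\mathbf{B})-h_2(\mathbf{B})\|_\infty\le\|h_1-h_2\|_R$ by (a), whence $\mathrm{tr}_N(h_1(\mathbf{B}))\ge\mathrm{tr}_N(h_2(\mathbf{B}))-\|h_1-h_2\|_R$ and the $h_1$-integrand is $\le e^{N^2\|h_1-h_2\|_R}$ times the $h_2$-integrand; integrating, taking logs, taking the supremum (the constant is uniform in $\mathbf{A}$), dividing by $N^2$ and passing to the limits gives $\pi_{\mathrm{orb},R}(h_1:(\tau_i))\le\|h_1-h_2\|_R+\pi_{\mathrm{orb},R}(h_2:(\tau_i))$, and exchanging $h_1\leftrightarrow h_2$ yields the $1$-Lipschitz bound; the norm bound then follows by taking $h_2=0$, for which the integrand is $1$ and so $\pi_{\mathrm{orb},R}(0:(\tau_i))=0$. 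Item (3) repeats the computation keeping signs: $h_1\le h_2$ forces $h_1(\mathbf{B})\le h_2(\mathbf{B})$, so the $h_1$-integrand dominates the $h_2$-integrand pointwise in the $V_i$, and this domination survives the integral, the logarithm, the supremum, the $\limsup$ and the infimum.

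For (4): with $h=\lambda h_1+(1-\lambda)h_2$, $\lambda\in[0,1]$, write the integrand as $[\,e^{-N^2\mathrm{tr}_N h_1(\mathbf{B})}\,]^{\lambda}[\,e^{-N^2\mathrm{tr}_N h_2(\mathbf{B})}\,]^{1-\lambda}$ and apply H\"older's inequality on $(\mathrm{U}(N)^n,\gamma_{\mathrm{U}(N)}^{\otimes n})$ with exponents $1/\lambda$ and $1/(1-\lambda)$; after taking logs this holds at each fixed $(\mathbf{A}_i)_i$ and is preserved by $\sup$, by $\frac1{N^2}\limsup_N$, and by $\lim_{m,\delta}$, using subadditivity of $\sup$ and $\limsup$. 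For (5): since $h_1$ involves only $\mathbf{x}_1,\dots,\mathbf{x}_{n'}$ and $h_2$ only $\mathbf{x}_{n'+1},\dots,\mathbf{x}_n$, one has $(h_1+h_2)(\mathbf{B})=h_1\bigl((V_i\mathbf{A}_i V_i^*)_{i=1}^{n'}\bigr)+h_2\bigl((V_i\mathbf{A}_i V_i^*)_{i=n'+1}^{n}\bigr)$, so the integrand is a product of a function of $(V_1,\dots,V_{n'})$ and a function of $(V_{n'+1},\dots,V_n)$; Fubini (using that the Haar measure is a product) factors the integral, and since the two factors depend on disjoint blocks of the $\mathbf{A}_i$'s, the supremum over $\prod_{i=1}^n\Gamma_R(\tau_i;N,m,\delta)$ splits into a sum of the two supremums. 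This gives an exact equality at finite $N,m,\delta$; applying $\frac1{N^2}\limsup_N$ and then $\lim_{m,\delta}$ turns it into the asserted inequality, and the cases where some $\tau_i$ fails f.d.a.\ are handled as in (1).

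None of this presents a serious obstacle; it is essentially bookkeeping. The one point that needs genuine care is the order in which $\sup$, $\limsup_N$ and $\lim_{m,\delta}$ are applied — in particular that $\limsup$ is only subadditive, which is precisely why (4) and (5) produce inequalities rather than equalities even though the underlying estimates are equalities at finite $N$; and in (5) one must note that it is the product structure of $\gamma_{\mathrm{U}(N)}^{\otimes n}$, together with $h_1,h_2$ lying on complementary free-product factors, that lets the integral factor.
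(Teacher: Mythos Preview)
Your proof is correct and follows essentially the same route as the paper's: item~(1) via the definition and Lemma~\ref{L2.1}, items~(2)--(3) via the pointwise bound $|\mathrm{tr}_N(h_1(\mathbf{B}))-\mathrm{tr}_N(h_2(\mathbf{B}))|\le\|h_1-h_2\|_R$, item~(4) via H\"older's inequality, and item~(5) via the factorization of the integral coming from the product structure of $\gamma_{\mathrm{U}(N)}^{\otimes n}$. Your write-up is in fact somewhat more explicit than the paper's (which leaves (3) to the reader and says only ``immediately implies'' for the passage to limits in (4)--(5)); your remark that (5) is an exact equality at finite $N,m,\delta$, with the inequality arising only from the subadditivity of $\limsup_N$, is a correct sharpening of what the paper records.
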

\begin{proof}
(1) is obvious by definition and Lemma \ref{L2.1}.

(2) For every $(\mathbf{A}_i)_{i=1}^n \in \prod_{i=1}^n\Gamma_R(\tau_i;N,m,\delta)$ and
$(V_i)_{i=1}^n \in \mathrm{U}(N)^n$, the obvious inequality
$$
\big|\mathrm{tr}_N(h_1((V_i\mathbf{A}_i V_i^*)_{i=1}^n))
- \mathrm{tr}_N(h_2((V_i\mathbf{A}_i V_i^*)_{i=1}^n))| \leq \Vert h_1 - h_2\Vert_R
$$ 
implies
\begin{align*}
&\exp\bigl(-N^2\mathrm{tr}_N\bigl(h_1((V_i\mathbf{A}_iV_i^*)_{i=1}^n)\bigr)\bigr)
e^{-\|h_1-h_2\|_R} \\
&\quad\le\exp\bigl(-N^2\mathrm{tr}_N\bigl(h_2((V_i\mathbf{A}_iV_i^*)_{i=1}^n)\bigr)\bigr)
\le\exp\bigl(-N^2\mathrm{tr}_N\bigl(h_1((V_i\mathbf{A}_iV_i^*)_{i=1}^n)\bigr)\bigr)
e^{\|h_1-h_2\|_R},
\end{align*}
which immediately gives the first assertion. The latter assertion also follows since
$\pi_{\mathrm{orb},R}(0:(\tau_i)_{i=1}^n)=0$.

(3) is easy and the proof is left to the reader.

(4) We may assume, thanks to (1) above, that all $\tau_i$ have f.d.a. Let
$0 < \alpha < 1$ and $h_1, h_2 \in \mathcal{C}_R(\mathbf{x})^{sa}$ be arbitrarily fixed.
For any $\mathbf{A}_i \in \Gamma(\tau_i;N,m,\delta)$, $1 \leq i \leq n$, one has
\begin{align*} 
&\int_{\mathrm{U}(N)^n} d\gamma_{\mathrm{U}(N)}^{\otimes n}(V_i)
\exp\big(-N^2\mathrm{tr}_N\big((\alpha h_1 + (1-\alpha)h_2)((V_i\mathbf{A}_i V_i^*)_{i=1}^n)\big)\big) \\
&\quad= 
\int_{\mathrm{U}(N)^n} d\gamma_{\mathrm{U}(N)}^{\otimes n}(V_i) \exp\big(-N^2\mathrm{tr}_N\big(h_1((V_i\mathbf{A}_i V_i^*)_{i=1}^n)\big)\big)^\alpha \\
&\phantom{aaaaaaaaaaaaaaaaaaa}\times\exp\big(-N^2\mathrm{tr}_N\big(h_2((V_i\mathbf{A}_i V_i^*)_{i=1}^n)\big)\big)^{1-\alpha} \\
&\quad\leq 
\Big[\int_{\mathrm{U}(N)^n} d\gamma_{\mathrm{U}(N)}^{\otimes n}(V_i)
\exp\big(-N^2\mathrm{tr}_N\big(h_1((V_i\mathbf{A}_i V_i^*)_{i=1}^n)\big)\big) \Big]^\alpha \\
&\phantom{aaaaaaaa}
\times \Big[\int_{\mathrm{U}(N)^n} d\gamma_{\mathrm{U}(N)}^{\otimes n} \exp\big(-N^2\mathrm{tr}_N\big(h_2((V_i\mathbf{A}_i V_i^*)_{i=1}^n)\big)\big) \Big]^{1-\alpha}
\end{align*}
by the H\"{o}lder inequality, and hence
\begin{align*}
&\log\int_{\mathrm{U}(N)^n} d\gamma_{\mathrm{U}(N)}^{\otimes n}(V_i) \exp\big(-N^2\mathrm{tr}_N\big((\alpha h_1 + (1-\alpha)h_2)((V_i\mathbf{A}_i V_i^*)_{i=1}^n)\big)\big) \\
&\qquad\qquad\qquad
\leq 
\alpha \pi_{\mathrm{orb},R}(h_1:(\tau_i)_{i=1}^n;N,m,\delta) + (1-\alpha)\pi_{\mathrm{orb},R}(h_2:(\tau_i)_{i=1}^n;N,m,\delta). 
\end{align*} 
Then the desired assertion is immediate.  

(5) We may and do also assume that all the $\tau_i$ have f.d.a.
For any $\mathbf{A}_i \in \Gamma(\tau_i;N,m,\delta)$, $1 \leq i \leq n$, one has 
\begin{align*} 
&\log\int_{\mathrm{U}(N)^n}d\gamma_{\mathrm{U}(N)}^{\otimes n}(V_i) \exp\bigl(-N^2 \bigl(\mathrm{tr}_N(h_1 ((V_i \mathbf{A}_i V_i^*)_{i=1}^{n'}) + \mathrm{tr}_N(h_2 ((V_i \mathbf{A}_i V_i^*)_{i=n'+1}^n)\bigr)\bigr) \\
&\quad= 
\log\int_{\mathrm{U}(N)^{n'}}d\gamma_{\mathrm{U}(N)}^{\otimes n'}(V_i) \exp\bigl(-N^2 \mathrm{tr}_N(h_1 ((V_i \mathbf{A}_i V_i^*)_{i=1}^{n'}))\bigr) \\
&\qquad\qquad+ \log\int_{\mathrm{U}(N)^{n-n'}}d\gamma_{\mathrm{U}(N)}^{\otimes(n-n')}(V_i) \exp\bigl(-N^2\mathrm{tr}_N(h_2 ((V_i \mathbf{A}_i V_i^*)_{i=n'+1}^n))\bigr) \\
&\quad\leq 
\pi_{\mathrm{orb},R}(h_1:(\tau_i)_{i=1}^{n'}\,;N,m,\delta) + \pi_{\mathrm{orb},R}(h_2:(\tau_i)_{i=n'+1}^n\,;N,m,\delta),
\end{align*} 
which immediately implies the desired assertion. 
\end{proof}

One can also consider $\pi_{\mathrm{orb},R}(h:(\tau_i)_{i=1}^n)$ as a function of
$\tau_i\in TS(\mathcal{C}_R(\mathbf{x}_i))$, $1\le i\le n$, with
$h\in\mathcal{C}_R(\mathbf{x})^{sa}$ fixed. The following lemma will be used in the
next section.

\begin{lemma}\label{L2.3}
For $1\le i\le n$ let $\tau_i,\tau_i^{(k)}\in TS(\mathcal{C}_R(\mathbf{x}_i))$,
$k\in\mathbb{N}$, such that $\tau_i^{(k)}\to\tau_i$ in the weak* topology as $k\to\infty$. Then, for
every $h\in\mathcal{C}_R(\mathbf{x})^{sa}$ one has
$$
\pi_{\mathrm{orb},R}(h:(\tau_i)_{i=1}^n)
\ge\limsup_{k\to\infty}\pi_{\mathrm{orb},R}(h:(\tau_i^{(k)})_{i=1}^n).
$$
\end{lemma}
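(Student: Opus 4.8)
The plan is to exploit the definition of the orbital free pressure at the finite level $N$ together with the fact that the microstate spaces $\Gamma_R(\tau_i^{(k)};N,m,\delta)$ are ``eventually contained'' in $\Gamma_R(\tau_i;N,2m,2\delta)$-type neighborhoods once $\tau_i^{(k)}$ is weak* close to $\tau_i$. More precisely, fix $m\in\mathbb N$ and $\delta>0$. Since $\tau_i^{(k)}\to\tau_i$ weak*, for each $i$ the moments of $\tau_i^{(k)}$ up to length $m$ converge to those of $\tau_i$; hence there is $k_0=k_0(m,\delta)$ such that for all $k\ge k_0$, $|\tau_i^{(k)}(x_{ij_1}\cdots x_{ij_l})-\tau_i(x_{ij_1}\cdots x_{ij_l})|<\delta/2$ for all words of length $l\le m$. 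Then for any $N$, every tuple $(\mathbf A_i)_{i=1}^n\in\prod_i\Gamma_R(\tau_i^{(k)};N,m,\delta/2)$ automatically lies in $\prod_i\Gamma_R(\tau_i;N,m,\delta)$ by the triangle inequality. Consequently, taking the supremum over the (smaller) set on the left and over the (larger) set on the right in \eqref{F2},
$$
\pi_{\mathrm{orb},R}(h:(\tau_i^{(k)})_{i=1}^n\,;N,m,\delta/2)\le\pi_{\mathrm{orb},R}(h:(\tau_i)_{i=1}^n\,;N,m,\delta)
$$
for all $k\ge k_0$ and all $N$, with the usual convention when a microstate space is empty.

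Next I would push this through the two limiting procedures. Dividing by $N^2$ and taking $\limsup_{N\to\infty}$ gives
$$
\pi_{\mathrm{orb},R}(h:(\tau_i^{(k)})_{i=1}^n\,;m,\delta/2)\le\pi_{\mathrm{orb},R}(h:(\tau_i)_{i=1}^n\,;m,\delta)
$$
for all $k\ge k_0(m,\delta)$. Since the right-hand side no longer depends on $k$, taking $\limsup_{k\to\infty}$ on the left yields
$$
\limsup_{k\to\infty}\pi_{\mathrm{orb},R}(h:(\tau_i^{(k)})_{i=1}^n\,;m,\delta/2)\le\pi_{\mathrm{orb},R}(h:(\tau_i)_{i=1}^n\,;m,\delta).
$$
Now the left-hand side is bounded below by $\limsup_{k\to\infty}\pi_{\mathrm{orb},R}(h:(\tau_i^{(k)})_{i=1}^n)$, because $\pi_{\mathrm{orb},R}(h:(\tau_i^{(k)})_{i=1}^n)=\inf_{m,\delta}\pi_{\mathrm{orb},R}(h:(\tau_i^{(k)})_{i=1}^n\,;m,\delta)\le\pi_{\mathrm{orb},R}(h:(\tau_i^{(k)})_{i=1}^n\,;m,\delta/2)$. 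Hence $\limsup_{k\to\infty}\pi_{\mathrm{orb},R}(h:(\tau_i^{(k)})_{i=1}^n)\le\pi_{\mathrm{orb},R}(h:(\tau_i)_{i=1}^n\,;m,\delta)$. Finally, letting $m\to\infty$ and $\delta\searrow0$ (equivalently, taking the infimum over $m,\delta$) on the right and using $\pi_{\mathrm{orb},R}(h:(\tau_i)_{i=1}^n)=\inf_{m,\delta}\pi_{\mathrm{orb},R}(h:(\tau_i)_{i=1}^n\,;m,\delta)$ delivers the claimed inequality.

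A small point worth handling carefully is the empty-microstate bookkeeping: if some $\Gamma_R(\tau_i^{(k)};N,m,\delta/2)=\emptyset$ the left-hand quantity at that $N$ is $-\infty$, which is harmless for an upper bound; and if instead $\Gamma_R(\tau_i;N,m,\delta)=\emptyset$ then the containment forces $\Gamma_R(\tau_i^{(k)};N,m,\delta/2)=\emptyset$ as well, so both sides are $-\infty$. Thus the displayed inequality at level $N$ holds unconditionally for $k\ge k_0$. I do not expect any serious obstacle here; the only mild subtlety is that one cannot hope for a matching lower bound (upper semicontinuity in the $\tau_i$ is the natural and only expected direction, exactly as for $\chi_{\mathrm{orb}}$), and one must be disciplined about the order of the limits — comparing at fixed $(m,\delta)$ first, sending $k\to\infty$ before relaxing $(m,\delta)$ — since $\pi_{\mathrm{orb},R}(h:(\tau_i^{(k)})_{i=1}^n\,;m,\delta)$ need not converge as $k\to\infty$ and only the one-sided estimate survives.
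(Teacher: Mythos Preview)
Your proof is correct and follows essentially the same approach as the paper: fix $(m,\delta)$, use weak* convergence of moments to obtain the containment $\Gamma_R(\tau_i^{(k)};N,m,\delta/2)\subseteq\Gamma_R(\tau_i;N,m,\delta)$ for large $k$, deduce the inequality at level $(m,\delta)$, then pass to the limits in the same order. Your added remarks on the empty-microstate convention and the order of limits are accurate and slightly more explicit than the paper's version, but the argument is the same.
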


\begin{proof}
Let $m \in \mathbb{N}$ and $\delta>0$ be arbitrarily given. By assumption we can choose
$k_0$ (depending on $m, \delta$) so that, for every $k \geq k_0$ and $1 \leq i \leq n$, 
$$
|\tau_i^{(k)}(x_{ij_1}\cdots x_{ij_l}) - \tau_i(x_{ij_1}\cdots x_{ij_l})| < \delta/2
$$
holds whenever $1 \leq j_k \leq r(i)$, $1 \leq k \leq l$ and $1 \leq l \leq m$. For every $k \geq k_0$
and $1 \leq i \leq n$, if
$(B_j)_{j=1}^{r(i)} \in \Gamma_R(\tau_i^{(k)}\,;N,m,\delta/2)$, then we have
$$ 
\big|\mathrm{tr}_N(B_{j_1}\cdots B_{j_l}) - \tau_i(x_{ij_1}\cdots x_{ij_l})\big| 
\leq \big|\mathrm{tr}_N(B_{j_1}\cdots B_{j_l}) - \tau_i^{(k)}(x_{ij_1}\cdots x_{ij_l})\big|
+ \delta/2 < \delta
$$
for the $j_k$ as above. This implies that
$\Gamma_R(\tau_i^{(k)}\,;N,m,\delta/2) \subseteq \Gamma_R(\tau_i\,;N,m,\delta)$
holds for every $k \geq k_0$, $1 \leq i \leq n$ and $N \in \mathbb{N}$. Consequently,
for every $h\in\mathcal{C}_R(\mathbf{x})^{sa}$,
$$
\pi_{\mathrm{orb},R}(h:(\tau_i^{(k)})_{i=1}^n)
\le \pi_{\mathrm{orb},R}(h:(\tau_i^{(k)})_{i=1}^n\,;m,\delta/2)
\leq \pi_{\mathrm{orb},R}(h:(\tau_i)_{i=1}^n\,;m,\delta)
$$ 
holds whenever $k \geq k_0$. Therefore,
$$
\limsup_{k\to\infty}\pi_{\mathrm{orb},R}(h:(\tau_i^{(k)})_{i=1}^n)
\le\pi_{\mathrm{orb},R}(h:(\tau_i)_{i=1}^n\,;m,\delta),
$$
which gives the desired inequality since $m\in\mathbb{N}$ and $\delta>0$ are arbitrary.
\end{proof}

Let $\tau_i$, $1\le i\le n$, be again fixed as in Definition \ref{D2.1}. We assume that
every $\mathbf{x}_i$ generates a hyperfinite von Neumann algebra in the GNS representation
associated with $\tau_i$. Obviously, this assumption is stronger than that all $\tau_i$
have f.d.a. For each $1 \leq i \leq n$ we can choose a sequence of microstates as in
Lemma \ref{L2.1}\,(3), i.e.,
$\Xi_i(N) \in (M_N(\mathbb{C})_R^{sa})^{r(i)}$, $N\in\mathbb{N}$, such that
$\lim_{N\to\infty} \mathrm{tr}_N(p(\Xi_i(N))) = \tau_i(p)$ for every non-commutative
polynomial $p$ in $\mathbf{x}_i$. In this situation we have the next result from the
viewpoint similar to \cite[\S4]{HiaiMiyamotoUeda:IJM09}.

\begin{proposition}\label{P2.4} With the assumption and the notations above, for every
$h\in\mathcal{C}_R(\mathbf{x})^{sa}$ we have 
\begin{align*}
&\pi_{\mathrm{orb},R}(h:(\tau_i)_{i=1}^n) \\
&\quad= 
\limsup_{N\to\infty}\frac{1}{N^2}\log
\int_{\mathrm{U}(N)^n} d\gamma_{\mathrm{U}(N)}^{\otimes n}(V_i)
\exp\bigl(-N^2 \mathrm{tr}_N(h((V_i \Xi_i(N) V_i^*)_{i=1}^n))\bigr)
\end{align*}
{\rm(}independently of the choice of approximating microstates $\Xi_i(N)$ as above{\rm)}.
\end{proposition}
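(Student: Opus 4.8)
The plan is to show that the quantity defined using a fixed approximating sequence $\Xi_i(N)$ coincides with the supremum over all microstates appearing in Definition \ref{D2.1}. The key point is that, because each $\mathbf{x}_i$ generates a hyperfinite von Neumann algebra, one has enough freedom to move an arbitrary microstate in $\Gamma_R(\tau_i;N,m,\delta)$ close to the fixed one $\Xi_i(N)$ by a unitary conjugation, up to a controlled error in matrix size. This is exactly the mechanism behind the key lemma of \cite[\S4]{HiaiMiyamotoUeda:IJM09} used to prove the analogous formula for $\chi_\mathrm{orb}$, and the integrand $\exp(-N^2\mathrm{tr}_N(h(\cdots)))$ is well-behaved enough (bounded between $e^{-N^2\|h\|_R}$ and $e^{N^2\|h\|_R}$, and Lipschitz in the operator norm of its matrix arguments by Proposition \ref{P2.2}(2) combined with a Duhamel-type estimate) that such approximations only perturb the log-integral by $o(N^2)$.

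First I would fix $h\in\mathcal{C}_R(\mathbf{x})^{sa}$, and denote by $P_R(h)$ the right-hand side $\limsup_N N^{-2}\log\int_{\mathrm{U}(N)^n}\exp(-N^2\mathrm{tr}_N(h((V_i\Xi_i(N)V_i^*)_{i=1}^n)))\,d\gamma_{\mathrm{U}(N)}^{\otimes n}$. The inequality $P_R(h)\le\pi_{\mathrm{orb},R}(h:(\tau_i)_{i=1}^n)$ is the easy direction: for every $m$ and $\delta$, the microstates $\Xi_i(N)$ lie in $\Gamma_R(\tau_i;N,m,\delta)$ for all large $N$, so the integral in question is bounded by $\exp(\pi_{\mathrm{orb},R}(h:(\tau_i)_{i=1}^n;N,m,\delta))$; taking $N^{-2}\log$ and $\limsup_N$ gives $P_R(h)\le\pi_{\mathrm{orb},R}(h:(\tau_i)_{i=1}^n;m,\delta)$, and then letting $m\to\infty$, $\delta\searrow0$ finishes it. Independence of $P_R(h)$ from the choice of $\Xi_i(N)$ will follow a posteriori from the reverse inequality, or can be seen directly by the same unitary-approximation argument applied to two competing sequences.

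For the reverse inequality $\pi_{\mathrm{orb},R}(h:(\tau_i)_{i=1}^n)\le P_R(h)$, the plan is: given $m,\delta$ and, for each $N$, a nearly optimal $(\mathbf{A}_i)_{i=1}^n\in\prod_i\Gamma_R(\tau_i;N,m,\delta)$, use the hyperfiniteness of the von Neumann algebra generated by $\mathbf{x}_i$ to find, for suitable $N'$ comparable to $N$ (say $N'=N$ after passing to a subsequence, or $N' = kN$ with $k$ fixed and then a standard amplification/averaging trick), a unitary $W_i\in\mathrm{U}(N)$ such that $W_i\mathbf{A}_iW_i^*$ is within operator-norm distance $\varepsilon$ of $\Xi_i(N)$; this is precisely the content of the approximation lemma behind \cite[\S4]{HiaiMiyamotoUeda:IJM09}, which exploits that both tuples have the same limiting $*$-distribution $\tau_i$ with hyperfinite generated algebra. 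By right-invariance of $\gamma_{\mathrm{U}(N)}$ the integral over $(V_i)$ is unchanged if we replace $\mathbf{A}_i$ by $W_i\mathbf{A}_iW_i^*$, and then the Lipschitz bound for $h$ (operator-norm perturbations of the arguments change $\mathrm{tr}_N(h(\cdots))$ by at most a constant times $\varepsilon$, by Proposition \ref{P2.2}(2)-type reasoning or a polynomial-approximation-plus-Duhamel estimate) lets us replace $W_i\mathbf{A}_iW_i^*$ by $\Xi_i(N)$ at the cost of multiplying the integrand by $e^{\pm C N^2\varepsilon}$; passing to $N^{-2}\log$, $\limsup_N$, then $\varepsilon\searrow0$, then $m\to\infty$, $\delta\searrow0$ yields the claim.

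The main obstacle is the unitary approximation step — matching an arbitrary microstate $\mathbf{A}_i$ to the fixed microstate $\Xi_i(N)$ up to operator-norm $\varepsilon$ by conjugation — since this is where hyperfiniteness is genuinely needed and where matrix-dimension bookkeeping (possibly an amplification from $N$ to $kN$ followed by a rank/size-matching argument, together with controlling the Haar measure under such inflation) enters. I expect this to go through by essentially transcribing the corresponding argument from \cite[\S4]{HiaiMiyamotoUeda:IJM09}, the only new ingredient being to verify that the exponential integrand tolerates the resulting $o(N^2)$-scale errors, which is immediate from the uniform bounds and Lipschitz continuity noted above.
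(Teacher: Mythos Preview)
Your overall strategy matches the paper's: the easy direction is exactly as you say, and for the hard direction one uses hyperfiniteness (Jung's theorem) to conjugate an arbitrary microstate close to the fixed $\Xi_i(N)$, then invokes right-invariance of Haar measure. Two points, however, need correction.

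First, the claim that one can find $W_i\in\mathrm{U}(N)$ with $W_i\mathbf{A}_iW_i^*$ within \emph{operator-norm} distance $\varepsilon$ of $\Xi_i(N)$ is too strong: Jung's theorem (as used in \cite[Lemma 1.2]{HiaiMiyamotoUeda:IJM09} and \cite[Proposition 2.3]{Ueda:IUMJ1x}) only yields closeness in normalized $2$-norm, or equivalently closeness of finitely many mixed moments. Operator-norm approximation after conjugation is not available for general hyperfinite tuples, and your ``Lipschitz bound for $h$'' step would not follow from $2$-norm closeness for an arbitrary $h\in\mathcal{C}_R(\mathbf{x})^{sa}$. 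The paper circumvents this cleanly: it first approximates $h$ by a self-adjoint \emph{polynomial} $p$ with $\|h-p\|_R<\varepsilon/3$, and then chooses $m',\delta'$ (depending on $p$) so that Jung's theorem gives $W_i$ with $|\mathrm{tr}_N(p((V_i\mathbf{A}_iV_i^*)_i))-\mathrm{tr}_N(p((V_iW_i\Xi_i(N)W_i^*V_i^*)_i))|<\varepsilon/3$ for every $(V_i)$. The $\|h-p\|_R$ error then passes through the exponential uniformly, exactly as in Proposition \ref{P2.2}(2).

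Second, no amplification or dimension bookkeeping is needed: once $m',\delta'$ are fixed, the conjugating $W_i$ exist at \emph{every} $N$ for which both $\mathbf{A}_i$ and $\Xi_i(N)$ lie in $\Gamma_R(\tau_i;N,m',\delta')$, which holds for all $N\ge N_1$. So the argument runs entirely at the given matrix size $N$, with no passage to $kN$ or subsequences beyond the $\limsup$.
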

\begin{proof} For each $m \in \mathbb{N}$ and $\delta > 0$ there is an $N_0 \in \mathbb{N}$ so that $\Xi_i(N) \in \Gamma_R(\tau_i\,;N,m,\delta)$ for all $N \geq N_0$ and $1 \leq i \leq n$. Hence the right-hand side of the desired identity is not greater than $\pi_{\mathrm{orb},R}(h:(\tau_i)_{i=1}^n\,;m,\delta)$ for every $m \in \mathbb{N}$ and $\delta > 0$. Hence the inequality $\geq$ of the desired identity holds true. 

Let $\varepsilon > 0$ be arbitrarily chosen. One can choose a non-commutative polynomial
$p = p^*$ in $\mathbf{x}$ such that $\Vert p - h\Vert_R < \varepsilon/3$. By Jung's theorem
\cite{Jung:MathAnn07} (see \cite[Lemma 1.2]{HiaiMiyamotoUeda:IJM09} and the proof of
\cite[Proposition 2.3]{Ueda:IUMJ1x}) one can choose (by looking at the polynomial $p$)
$m' \in \mathbb{N}$ and $\delta' > 0$ in such a way that, if $(\mathbf{B}_i)_{i=1}^n, (\mathbf{B}'_i)_{i=1}^n$ are in $\prod_{i=1}^n \Gamma_R(\tau_i\,;N,m',\delta')$, then there exists $W_i \in \mathrm{U}(N)$ (depending on $\mathbf{B}_i,\mathbf{B}'_i$),
$1 \leq i \leq n$, such that 
$$
|\mathrm{tr}_N(p((V_i\mathbf{B}_i V_i^*)_{i=1}^n)) - \mathrm{tr}_N(p((V_i W_i\mathbf{B}'_i W_i^* V_i^*)_{i=1}^n))| < \varepsilon/3 
$$
for every $N \in \mathbb{N}$ and every $(V_i)_{i=1}^n \in \mathrm{U}(N)^n$. Let
$N_1 \in \mathbb{N}$ be chosen so that every $\Xi_i(N)$ falls in
$\Gamma_R(\tau_i\,;N,m',\delta')$ as long as $N \geq N_1$. Now, assume that $N \geq N_1$
and $(\mathbf{A}_i)_{i=1}^n \in \prod_{i=1}^n \Gamma_R(\tau_i\,;N,m',\delta')$, and
choose $W_i \in\mathrm{U}(N)$ as above for $\mathbf{A}_i$ and $\Xi_i(N)$, $1\le i\le n$.
Since
\begin{align*}
&|\mathrm{tr}_N(h((V_i \mathbf{A}_i V_i^*)_{i=1}^n))
- \mathrm{tr}_N(h((V_i W_i\Xi_i(N)W_i^* V_i^*)_{i=1}^n))| \\
&\quad\leq 
2\Vert h - p\Vert_R + 
|\mathrm{tr}_N(p((V_i \mathbf{A}_i V_i^*)_{i=1}^n))
- \mathrm{tr}_N(p((V_i W_i\Xi_i(N)W_i^* V_i^*)_{i=1}^n))| < \varepsilon
\end{align*}
for every $V_i\in\mathrm{U}(N)$, it follows that
\begin{align*}
&\log\int_{\mathrm{U}(N)^n} d\gamma_{\mathrm{U}(N)}^{\otimes n}(V_i)
\exp\bigl(-N^2\mathrm{tr}_N(h((V_i \mathbf{A}_i V_i^*)_{i=1}^n))\bigr) \\
&\quad< 
\log\int_{\mathrm{U}(N)^n} d\gamma_{\mathrm{U}(N)}^{\otimes n}(V_i)
\exp\bigl(-N^2\mathrm{tr}_N(h((V_i W_i \Xi_i(N) W_i^* V_i^*)_{i=1}^n))\bigr)
+ N^2\varepsilon \\
&\quad= 
\log\int_{\mathrm{U}(N)^n} d\gamma_{\mathrm{U}(N)}^{\otimes n}(V_i)
\exp\bigl(-N^2\mathrm{tr}_N(h((V_i \Xi_i(N) V_i^*)_{i=1}^n))\bigr)
+ N^2\varepsilon
\end{align*}
thanks to the invariance of $\gamma_{\mathrm{U}(N)}$. Therefore, for every $N \geq N_1$,
\begin{align*} 
&\pi_{\mathrm{orb},R}(h:(\tau_i)_{i=1}^n)
\leq  
\pi_{\mathrm{orb},R}(h:(\tau_i)_{i=1}^n\,;m',\delta') \\
&\quad\leq 
\limsup_{N\to\infty}\frac{1}{N^2}\log \int_{\mathrm{U}(N)^n}
d\gamma_{\mathrm{U}(N)}^{\otimes n}(V_i)
\exp\bigl(-N^2\mathrm{tr}_N(h((V_i \Xi_i(N) V_i^*)_{i=1}^n))\bigr) + \varepsilon, 
\end{align*} 
which implies the inequality $\leq$ of the desired identity since $\varepsilon>0$ is
arbitrary.  
\end{proof}

\begin{remark}\label{R2.5}
In the hyperfiniteness situation (in particular, in the case where every $\mathbf{x}_i$
is a singleton), the identity of the above proposition may serve as an alternative
definition of $\pi_{\mathrm{orb},R}(h:(\tau_i)_{i=1}^n)$. By suitably modifying the above proof we also see, in this situation, that the definition becomes equivalent when supremum
in \eqref{F2} of Definition \ref{D2.1} is replaced with infimum, that is,
\begin{align*}
&\underline{\pi}_{\mathrm{orb},R}(h:(\tau_i)_{i=1}^n;N,m,\delta) \\
&\quad:= 
\inf_{(\mathbf{A}_i)_{i=1}^n\in\prod_{i=1}^n\Gamma_R(\tau_i;N,m,\delta)}
\log\int_{U(N)^n}d\gamma_{\mathrm{U}(N)}^{\otimes n}
\exp\bigl(-N^2\mathrm{tr}_N(h((V_i\mathbf{A}_iV_i^*)_{i=1}^n))\bigr), \\
&\pi_{\mathrm{orb},R}(h:(\tau_i)_{i=1}^n)
=\sup_{m \in \mathbb{N}\atop\delta>0}\,\limsup_{N\to\infty}{1\over N^2}\,\underline{\pi}_{\mathrm{orb},R}(h:(\tau_i)_{i=1}^n;N,m,\delta).
\end{align*}
\end{remark}

\section{Orbital $\eta$-entropy} 

We keep the notations in \S2. Let $\tau_i \in TS(\mathcal{C}_R(\mathbf{x}_i))$,
$1 \leq i \leq n$, be given. By Proposition \ref{P2.2} we have known that
$h\in\mathcal{C}_R(\mathbf{x})^{sa} \mapsto \pi_{\mathrm{orb},R}(h:(\tau_i)_{i=1}^n)$ is
convex and norm-continuous (as long as all $\tau_i$ have f.d.a.). Thus we may consider
its Legendre transform with respect to the (real) Banach space duality between
$\mathcal{C}_R(\mathbf{x})^{sa}$ and $\mathcal{C}_R(\mathbf{x})^{* sa}$, the self-adjoint
part of the dual space $\mathcal{C}_R(\mathbf{x})^*$

\begin{definition}\label{D3.1}
For a given $\varphi \in \mathcal{C}_R(\mathbf{x})^{* sa}$ we define 
$$
\eta_{\mathrm{orb},R}(\varphi:(\tau_i)_{i=1}^n)
:= \inf\{ \varphi(h) + \pi_{\mathrm{orb},R}(h:(\tau_i)_{i=1}^n)\,|\,h \in \mathcal{C}_R(\mathbf{x})^{sa}\}\ (\in[-\infty,\infty)). 
$$
\end{definition}

Remark that this is not exactly the Legendre transform; it is indeed the minus Legendre transform of $h \mapsto \pi_{\mathrm{orb},R}(h:(\tau_i)_{i=1}^n)$.

\begin{proposition}\label{P3.1} If $\eta_{\mathrm{orb},R}(\varphi:(\tau_i)_{i=1}^n) > -\infty$, then $\varphi$ must be in $TS(\mathcal{C}_R(\mathbf{x}))$ and satisfy $\varphi\!\upharpoonright_{\mathcal{C}_R(\mathbf{x}_i)} = \tau_i$ for every $1 \leq i \leq n$.
\end{proposition}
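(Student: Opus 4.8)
The plan is to show that if $\eta_{\mathrm{orb},R}(\varphi:(\tau_i)_{i=1}^n)>-\infty$, then the linear functional $\varphi$ is forced to be positive, unital, tracial, and to restrict correctly on each $\mathcal{C}_R(\mathbf{x}_i)$. The key tool throughout is that $\eta_{\mathrm{orb},R}(\varphi:(\tau_i)_{i=1}^n)\le\varphi(h)+\pi_{\mathrm{orb},R}(h:(\tau_i)_{i=1}^n)$ for every $h\in\mathcal{C}_R(\mathbf{x})^{sa}$, combined with the bounds on $\pi_{\mathrm{orb},R}$ from Proposition \ref{P2.2}; in particular we may (and must) assume all $\tau_i$ have f.d.a., since otherwise $\pi_{\mathrm{orb},R}\equiv-\infty$ and the infimum defining $\eta_{\mathrm{orb},R}$ would be $-\infty$ already, contradicting the hypothesis.

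First I would test the inequality on scalars: for $h=t\cdot 1$ with $t\in\mathbb{R}$ one has $\pi_{\mathrm{orb},R}(t\cdot 1:(\tau_i)_{i=1}^n)=-t$ (directly from \eqref{F2}, since the integrand becomes the constant $e^{-N^2 t}$), so $-\infty<\eta_{\mathrm{orb},R}\le t\varphi(1)-t$ for all $t\in\mathbb{R}$, which forces $\varphi(1)=1$. Next, to get positivity, I would use monotonicity (Proposition \ref{P2.2}(3)): if $h\in\mathcal{C}_R(\mathbf{x})^{sa}$ satisfies $h\ge 0$, then for every $\lambda>0$ we have $-\lambda h\le 0$, hence $\pi_{\mathrm{orb},R}(-\lambda h:(\tau_i)_{i=1}^n)\ge\pi_{\mathrm{orb},R}(0:(\tau_i)_{i=1}^n)=0$, and therefore $-\infty<\eta_{\mathrm{orb},R}\le-\lambda\varphi(h)+\pi_{\mathrm{orb},R}(-\lambda h)$. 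But by Proposition \ref{P2.2}(2), $\pi_{\mathrm{orb},R}(-\lambda h)\le\lambda\|h\|_R$ is not good enough on its own; instead I would combine with $\pi_{\mathrm{orb},R}(-\lambda h)\le\|{-\lambda h}\|_R=\lambda\|h\|_R$ — actually the cleaner route is: apply the inequality with $h$ replaced by $-\lambda h$ and use boundedness $\pi_{\mathrm{orb},R}(-\lambda h)\le\|\lambda h\|_R$ only after dividing by $\lambda$ and letting $\lambda\to\infty$, which yields $-\varphi(h)+\liminf_{\lambda\to\infty}\pi_{\mathrm{orb},R}(-\lambda h)/\lambda\le 0$; since $\pi_{\mathrm{orb},R}(-\lambda h)/\lambda$ is bounded, we get $\varphi(h)\ge 0$. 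Thus $\varphi\ge 0$, and together with $\varphi(1)=1$ this makes $\varphi$ a state.

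To obtain the restriction property $\varphi\!\upharpoonright_{\mathcal{C}_R(\mathbf{x}_i)}=\tau_i$, I would take $h\in\mathcal{C}_R(\mathbf{x}_i)^{sa}\subseteq\mathcal{C}_R(\mathbf{x})^{sa}$ for a single fixed $i$ and compute $\pi_{\mathrm{orb},R}(h:(\tau_i)_{i=1}^n)$ on such an $h$. For $h$ depending only on the $i$-th block, the integrand $\exp(-N^2\mathrm{tr}_N(h((V_j\mathbf{A}_jV_j^*)_j)))=\exp(-N^2\mathrm{tr}_N(h(V_i\mathbf{A}_iV_i^*)))$ involves only $\mathbf{A}_i$ and $V_i$; by unitary invariance of $\mathrm{tr}_N$ this equals $\exp(-N^2\mathrm{tr}_N(h(\mathbf{A}_i)))$, which is independent of $V_i$, and for $\mathbf{A}_i\in\Gamma_R(\tau_i;N,m,\delta)$ this is close to $\exp(-N^2\tau_i(h))$ up to an error controlled by $m,\delta$ (after approximating $h$ by a polynomial). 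Hence $\pi_{\mathrm{orb},R}(h:(\tau_i)_{i=1}^n)=-\tau_i(h)$ for $h\in\mathcal{C}_R(\mathbf{x}_i)^{sa}$. Feeding this into the defining inequality and replacing $h$ by $\pm\lambda h$, dividing by $\lambda$ and sending $\lambda\to\infty$ gives $\varphi(h)=\tau_i(h)$ for all $h\in\mathcal{C}_R(\mathbf{x}_i)^{sa}$, hence $\varphi\!\upharpoonright_{\mathcal{C}_R(\mathbf{x}_i)}=\tau_i$.

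Finally, to see that $\varphi$ is tracial, I would exploit the built-in unitary conjugation in the definition. For any unitary $u$ in (a suitable dense subalgebra or the unitization of) $\mathcal{C}_R(\mathbf{x})$ and any $h$, one wants to compare $\varphi(h)$ with $\varphi(uhu^*)$; here the natural mechanism is that $\pi_{\mathrm{orb},R}$ is itself invariant under the simultaneous conjugation by block-diagonal unitaries coming from $\mathrm{U}(N)^n$, but traciality of $\varphi$ with respect to the full algebra is more subtle — the main obstacle. I expect the cleanest argument to go through the microstate side: one shows directly from \eqref{F2}, using that $\mathrm{tr}_N$ is tracial and that the $\mathrm{U}(N)$-average is conjugation-invariant, that $\pi_{\mathrm{orb},R}(h:(\tau_i)_{i=1}^n)$ is unchanged under $h\mapsto u h u^*$ for $u$ in the image of $\mathrm{U}(N)^n$-type unitaries, and then upgrades this by density/continuity (using norm-continuity of $\pi_{\mathrm{orb},R}$ from Proposition \ref{P2.2}(2)) to all unitaries generating $\mathcal{C}_R(\mathbf{x})$; applying the defining inequality to $h$ and to $uhu^*$ and scaling then forces $\varphi(h)=\varphi(uhu^*)$ for all $h$, i.e.\ $\varphi\circ\mathrm{Ad}(u)=\varphi$, hence $\varphi$ is tracial. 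The delicate point to check carefully is precisely which unitaries' conjugations preserve $\pi_{\mathrm{orb},R}$ and that these suffice to conclude traciality on the whole $C^*$-algebra; I would handle this by reducing to generators and polynomial relations and invoking the density argument above.
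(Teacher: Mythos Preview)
Your overall strategy---test the defining inequality on well-chosen $h$'s and scale---is the standard one, and your treatment of the restriction property $\varphi\!\upharpoonright_{\mathcal{C}_R(\mathbf{x}_i)}=\tau_i$ is essentially the paper's own argument (the paper proves only the inequality $\pi_{\mathrm{orb},R}(\alpha p:(\tau_i)_i)\le-\alpha(\tau_{i_0}(p)-\varepsilon)$ needed for the contrapositive, but the underlying computation is yours). Two parts of your write-up, however, have genuine gaps.

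\textbf{Positivity.} Your sign is backwards. From $\eta\le-\lambda\varphi(h)+\pi_{\mathrm{orb},R}(-\lambda h)$ you obtain, after dividing by $\lambda$, only $\varphi(h)\le\limsup_{\lambda}\pi_{\mathrm{orb},R}(-\lambda h)/\lambda$; boundedness of the right side gives nothing about the sign of $\varphi(h)$. The correct move is to plug in $\lambda h$ with $\lambda>0$: since $\lambda h\ge0$, monotonicity gives $\pi_{\mathrm{orb},R}(\lambda h)\le\pi_{\mathrm{orb},R}(0)=0$, hence $\eta\le\lambda\varphi(h)$, and letting $\lambda\to\infty$ forces $\varphi(h)\ge0$.

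\textbf{Traciality.} You are overcomplicating and in the end not closing the argument. The point you need is simply that the evaluation $f\mapsto f(\mathbf{A})$ is a $*$-homomorphism into $M_N(\mathbb{C})$, so for \emph{any} unitary $u\in\mathcal{C}_R(\mathbf{x})$ one has $(uhu^*)(\mathbf{A})=u(\mathbf{A})\,h(\mathbf{A})\,u(\mathbf{A})^*$ and hence $\mathrm{tr}_N((uhu^*)(\mathbf{A}))=\mathrm{tr}_N(h(\mathbf{A}))$. No restriction to ``$\mathrm{U}(N)^n$-type'' unitaries and no density argument are needed. Equivalently, for self-adjoint $f,g$ the commutator $h_0:=i[f,g]$ satisfies $\mathrm{tr}_N(h_0(\mathbf{A}))=0$ identically, so $\pi_{\mathrm{orb},R}(\lambda h_0:(\tau_i)_i)=0$ for every $\lambda\in\mathbb{R}$, whence $\eta\le\lambda\varphi(h_0)$ for all $\lambda$ forces $\varphi(h_0)=0$. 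This gives $\varphi(fg)=\varphi(gf)$ for all self-adjoint $f,g$, hence traciality by linearity. Your step ``applying the defining inequality to $h$ and to $uhu^*$ and scaling then forces $\varphi(h)=\varphi(uhu^*)$'' does not work as stated: having the same lower bound $\eta\le\varphi(h)+\pi(h)$ and $\eta\le\varphi(uhu^*)+\pi(h)$ does not pin down either value. You must apply the inequality to the single test element $\lambda(h-uhu^*)$ (or $\lambda\,i[f,g]$), whose orbital pressure vanishes identically.

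For comparison, the paper does not spell out the ``$\varphi\in TS$'' step at all; it simply cites \cite[Lemma~3.3]{Hiai:CMP05}, where the analogous Legendre-transform argument (exactly the commutator trick above) is carried out for the ordinary free pressure. The restriction part is then handled directly, as you do.
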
 
\begin{proof} The same proof as that of \cite[Lemma 3.3]{Hiai:CMP05} works well to show that the finiteness assumption implies $\varphi \in TS(\mathcal{C}_R(\mathbf{x}))$. Hence it suffices to prove that if $\tau \in TS(\mathcal{C}_R(\mathbf{x}))$ satisfies $\tau\!\upharpoonright_{\mathcal{C}_R(\mathbf{x}_i)} \neq \tau_i$ for some $1 \leq i \leq n$, then $\eta_{\mathrm{orb},R}(\tau:(\tau_i)_{i=1}^n) = -\infty$. We may and do assume that all $\tau_i$ have f.d.a.; otherwise, $\pi_{\mathrm{orb},R}(h:(\tau_i)_{i=1}^n) = -\infty$ for every $h \in \mathcal{C}_R(\mathbf{x})^{sa}$ by definition and Lemma \ref{L2.1}.

By assumption we have $\tau(p) \neq \tau_{i_0}(p)$ for some $1 \leq i_0 \leq n$ and some non-commutative polynomial $p=p^*$ in $\mathbf{x}_{i_0}$. We may and do assume that $\tau(p) < \tau_{i_0}(p)$. Let $\varepsilon>0$ be arbitrarily given. Then there are $m \in \mathbb{N}$ and $\delta > 0$ such that $\mathbf{A} \in \Gamma_R(\tau_{i_0}\,;N,m,\delta)$ implies $|\mathrm{tr}_N(p(\mathbf{A})) - \tau_{i_0}(p)| < \varepsilon$ so that $\tau_{i_0}(p) - \varepsilon < \mathrm{tr}_N(p(\mathbf{A}))$. By the f.d.a.~assumption with Lemma \ref{L2.1}, for every sufficiently large $N$ we have known $\Gamma(\tau_{i_0}\,;N,m,\delta) \neq \emptyset$ and moreover $\tau_{i_0}(p) - \varepsilon \leq \inf_{\mathbf{A} \in \Gamma(\tau_{i_0}\,;N,m,\delta)} \mathrm{tr}_N(p(\mathbf{A}))$. 
For such $N$ and for any $\alpha > 0$ we then have 
\begin{align*} 
&\pi_{\mathrm{orb},R}(\alpha p:(\tau_i)_{i=1}^n\,;N,m,\delta) \\
&\quad= 
\sup_{\mathbf{A} \in \Gamma(\tau_{i_0}\,;N,m,\delta)} \log \int_{\mathrm{U}(N)} d\gamma_{\mathrm{U}(N)}(V) \exp\bigl(-N^2\mathrm{tr}_N(\alpha p(V \mathbf{A} V^*))\bigr) \\
&\quad\le
-N^2 \inf_{\mathbf{A} \in \Gamma(\tau_{i_0}\,;N,m,\delta)}\mathrm{tr}_N(\alpha p(\mathbf{A})) 
\leq -N^2 \alpha(\tau_{i_0}(p) - \varepsilon). 
\end{align*}
Therefore, $\tau(\alpha p) + \pi_{\mathrm{orb},R}(\alpha p:(\tau_i)_{i=1}^n)
\leq \alpha(\tau(p) - \tau_{i_0}(p)) + \alpha \varepsilon$. Since $\varepsilon>0$ is arbitrary, $\tau(\alpha p) + \pi_{\mathrm{orb},R}(\alpha p:(\tau_i)_{i=1}^n) \leq \alpha(\tau(p) - \tau_{i_0}(p))$, which immediately implies the desired assertion by letting $\alpha \to \infty$. 
\end{proof}

\begin{corollary}\label{C3.2} For every $h \in \mathcal{C}_R(\mathbf{x})^{sa}$ we have 
\begin{align*}
&\pi_{\mathrm{orb},R}(h:(\tau_i)_{i=1}^n) \\
&\quad= \max\{-\tau(h) + \eta_{\mathrm{orb},R}(\tau:(\tau_i)_{i=1}^n)\,|\,
\tau \in TS(\mathcal{C}_R(\mathbf{x})),\,
\tau\!\upharpoonright_{\mathcal{C}_R(\mathbf{x}_i)} = \tau_i,\,1\le i\le n\}.
\end{align*}
\end{corollary}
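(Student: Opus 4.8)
The plan is to recognise $h\mapsto\pi_{\mathrm{orb},R}(h:(\tau_i)_{i=1}^n)$ as its own biconjugate via the Fenchel--Moreau theorem and then to trim the resulting supremum down to $TS(\mathcal{C}_R(\mathbf{x}))$ with the prescribed marginals using Proposition \ref{P3.1}, finally upgrading $\sup$ to $\max$ by a weak* compactness argument. We may assume that all the $\tau_i$ have f.d.a.; otherwise both sides equal $-\infty$ by Proposition \ref{P2.2}(1) and Definition \ref{D3.1} (note that $\eta_{\mathrm{orb},R}(\,\cdot\,:(\tau_i)_{i=1}^n)\equiv-\infty$ in that case, so the right-hand side is also $-\infty$).

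Under the f.d.a.\ assumption, Proposition \ref{P2.2}(2),(4) says that $h\mapsto\pi_{\mathrm{orb},R}(h:(\tau_i)_{i=1}^n)$ is a real-valued, convex, norm-continuous — hence norm-lower-semicontinuous — function on the real Banach space $\mathcal{C}_R(\mathbf{x})^{sa}$. By Definition \ref{D3.1}, the map $\varphi\mapsto-\eta_{\mathrm{orb},R}(-\varphi:(\tau_i)_{i=1}^n)$ on $\mathcal{C}_R(\mathbf{x})^{*sa}$ is precisely the Legendre transform (conjugate) of $h\mapsto\pi_{\mathrm{orb},R}(h:(\tau_i)_{i=1}^n)$. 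Applying the Fenchel--Moreau theorem to the duality between $\mathcal{C}_R(\mathbf{x})^{sa}$ and $\mathcal{C}_R(\mathbf{x})^{*sa}$, $\pi_{\mathrm{orb},R}(\,\cdot\,:(\tau_i)_{i=1}^n)$ coincides with its biconjugate, which after unwinding the signs reads
\begin{equation*}
\pi_{\mathrm{orb},R}(h:(\tau_i)_{i=1}^n)=\sup\bigl\{-\varphi(h)+\eta_{\mathrm{orb},R}(\varphi:(\tau_i)_{i=1}^n)\,\bigm|\,\varphi\in\mathcal{C}_R(\mathbf{x})^{*sa}\bigr\}
\end{equation*}
for every $h\in\mathcal{C}_R(\mathbf{x})^{sa}$.

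Now, by Proposition \ref{P3.1}, the expression $-\varphi(h)+\eta_{\mathrm{orb},R}(\varphi:(\tau_i)_{i=1}^n)$ is $-\infty$ unless $\varphi$ lies in
\begin{equation*}
K:=\bigl\{\tau\in TS(\mathcal{C}_R(\mathbf{x}))\,\bigm|\,\tau\!\upharpoonright_{\mathcal{C}_R(\mathbf{x}_i)}=\tau_i,\ 1\le i\le n\bigr\},
\end{equation*}
so the supremum above is unchanged when restricted to $\tau\in K$. This set is nonempty (it contains the free product state $\tau_1\star\cdots\star\tau_n$) and weak* closed in the unit ball of $\mathcal{C}_R(\mathbf{x})^*$, being cut out by the weak*-continuous conditions $\tau(1)=1$, $\tau\ge0$, $\tau(ab)=\tau(ba)$ and $\tau(x_{ij_1}\cdots x_{ij_l})=\tau_i(x_{ij_1}\cdots x_{ij_l})$; hence it is weak* compact by Banach--Alaoglu. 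Moreover $\tau\mapsto-\tau(h)$ is weak* continuous, while $\tau\mapsto\eta_{\mathrm{orb},R}(\tau:(\tau_i)_{i=1}^n)$ is, by Definition \ref{D3.1}, an infimum over $h'\in\mathcal{C}_R(\mathbf{x})^{sa}$ of the weak*-continuous affine functionals $\tau\mapsto\tau(h')+\pi_{\mathrm{orb},R}(h':(\tau_i)_{i=1}^n)$, hence weak* upper semicontinuous. Therefore $\tau\mapsto-\tau(h)+\eta_{\mathrm{orb},R}(\tau:(\tau_i)_{i=1}^n)$ is weak* upper semicontinuous on the weak* compact set $K$ and attains its supremum there, which yields the stated formula with $\max$.

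The only real care needed is the bookkeeping of signs between the ``minus Legendre transform'' convention of Definition \ref{D3.1} and the standard conjugate used in Fenchel--Moreau, and checking that the hypotheses of the biconjugation theorem — convexity and lower semicontinuity of $\pi_{\mathrm{orb},R}(\,\cdot\,:(\tau_i)_{i=1}^n)$ — are genuinely supplied by Proposition \ref{P2.2}. The passage from $\sup$ to $\max$ is then a routine compactness-plus-upper-semicontinuity argument, and there is no serious obstacle beyond this.
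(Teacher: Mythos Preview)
Your argument is correct and follows essentially the same route as the paper: invoke the Legendre/Fenchel--Moreau duality to express $\pi_{\mathrm{orb},R}(h:(\tau_i)_{i=1}^n)$ as a supremum over $\mathcal{C}_R(\mathbf{x})^{*sa}$, use Proposition~\ref{P3.1} to restrict to tracial states with the prescribed marginals, and upgrade to a maximum via weak* upper semicontinuity of $\eta_{\mathrm{orb},R}$ and weak* compactness of that set. Your write-up simply adds more detail (the explicit $-\infty$ case, nonemptiness of $K$ via the free product state, and the sign bookkeeping), but the underlying proof is the same.
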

\begin{proof} The Legendre transform duality (see e.g., \cite[\S I.6]{Simon:BookPhys}) gives
$$
\pi_{\mathrm{orb},R}(h:(\tau_i)_{i=1}^n)
= \sup\{ -\varphi(h) + \eta_{\mathrm{orb},R}(\varphi:(\tau_i)_{i=1}^n)\,|\,\varphi \in \mathcal{C}_R(\mathbf{x})^{* sa}\}.
$$
By Proposition \ref{P3.1} the above supremum can be taken over tracial states restricted
as asserted. Then supremum can be replaced with maximum since
$\varphi\mapsto\eta_{\mathrm{orb},R}(\varphi:(\tau_i)_{i=1}^n)$ is upper
semicontinuous in the weak* topology by definition and the set of $\tau\in TS(\mathcal{C}_R(\mathbf{x}))$ with
$\tau\!\upharpoonright_{\mathcal{C}_R(\mathbf{x}_i)} = \tau_i$ for $1\le i\le n$ is
weak*-compact.
\end{proof}

For each $\tau\in TS(\mathcal{C}_R(\mathbf{x}))$, letting
$\tau_i:=\tau\!\upharpoonright_{\mathcal{C}_R(\mathbf{x}_i)}$, $1\le i\le n$, we write
$$
\eta_{\mathrm{orb},R}(\tau):=\eta_{\mathrm{orb},R}(\tau:(\tau_i)_{i=1}^n)
$$
and call it the \emph{orbital $\eta$-entropy} of $\tau$ (relative to the formation
\eqref{F1}).

\begin{proposition}\label{P3.3}
The orbital $\eta$-entropy $\eta_{\mathrm{orb},R}(\tau)$ for
$\tau\in TS(\mathcal{C}_R(\mathbf{x}))$ enjoys the following properties{\rm:}
\begin{itemize}
\item[(1)] $\eta_{\mathrm{orb},R}(\tau)\le0$.
\item[(2)] $\eta_{\mathrm{orb},R}(\tau)=-\infty$ if one of the $\tau_i$ does not have
f.d.a.
\item[(3)] For $1\le n'<n$,
$$
\eta_{\mathrm{orb},R}(\tau)\le\eta_{\mathrm{orb},R}
\bigl(\tau\!\upharpoonright_{\mathcal{C}_R(\sqcup_{i=1}^{n'}\mathbf{x}_i)}\bigr)
+\eta_{\mathrm{orb},R}
\bigl(\tau\!\upharpoonright_{\mathcal{C}_R(\sqcup_{i=n'+1}^n\mathbf{x}_i)}\bigr).
$$
\item[(4)] $\tau \mapsto \eta_{\mathrm{orb},R}(\tau)$ is upper semicontinuous on
$TS(\mathcal{C}(\mathbf{x}))$ equipped with the weak* topology.
\item[(5)] For given $\tau_i^0\in TS(\mathcal{C}_R(\mathbf{x}_i))$, $1\le i\le n$,
$\tau \mapsto \eta_{\mathrm{orb},R}(\tau)$ is concave on
$$
\{\tau\in TS(\mathcal{C}_R(\mathbf{x}))
\,|\,\tau\!\upharpoonright_{\mathcal{C}_R(\mathbf{x}_i)}=\tau_i^0,\,1\le i\le n\}.
$$
\end{itemize}
\end{proposition}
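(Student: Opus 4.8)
The plan is to obtain every item of the proposition directly from the corresponding property of the orbital free pressure established in Proposition \ref{P2.2}, together with Lemma \ref{L2.3}, using only the formal structure of the (minus) Legendre transform in Definition \ref{D3.1}; there is no genuinely hard analytic point, and the only thing to be careful about is keeping track of restrictions of tracial states. For (1) and (2) I would simply test the infimum in Definition \ref{D3.1} at $h=0$: since $\pi_{\mathrm{orb},R}(0:(\tau_i)_{i=1}^n)=0$ by Proposition \ref{P2.2}(2), the infimum is $\le\tau(0)+0=0$, giving (1); and if some $\tau_i$ has no f.d.a., then $\pi_{\mathrm{orb},R}(h:(\tau_i)_{i=1}^n)=-\infty$ for all $h$ by Proposition \ref{P2.2}(1), so the infimum is $-\infty$, giving (2). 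For (5) I would observe that on the set $\{\tau:\tau\!\upharpoonright_{\mathcal{C}_R(\mathbf{x}_i)}=\tau_i^0,\ 1\le i\le n\}$ the pressure term $\pi_{\mathrm{orb},R}(h:(\tau_i^0)_{i=1}^n)$ is independent of $\tau$, so there $\eta_{\mathrm{orb},R}(\tau)=\inf_h\{\tau(h)+\pi_{\mathrm{orb},R}(h:(\tau_i^0)_{i=1}^n)\}$ is an infimum of functions affine in $\tau$, hence concave in $\tau$.

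For (3), write $\sigma_1:=\tau\!\upharpoonright_{\mathcal{C}_R(\sqcup_{i=1}^{n'}\mathbf{x}_i)}$ and $\sigma_2:=\tau\!\upharpoonright_{\mathcal{C}_R(\sqcup_{i=n'+1}^{n}\mathbf{x}_i)}$. Given $h_1\in\mathcal{C}_R(\sqcup_{i=1}^{n'}\mathbf{x}_i)^{sa}$ and $h_2\in\mathcal{C}_R(\sqcup_{i=n'+1}^{n}\mathbf{x}_i)^{sa}$, I would test Definition \ref{D3.1} at $h_1+h_2$, use $\tau(h_1+h_2)=\sigma_1(h_1)+\sigma_2(h_2)$, and bound $\pi_{\mathrm{orb},R}(h_1+h_2:(\tau_i)_{i=1}^n)$ from above by $\pi_{\mathrm{orb},R}(h_1:(\tau_i)_{i=1}^{n'})+\pi_{\mathrm{orb},R}(h_2:(\tau_i)_{i=n'+1}^{n})$ via Proposition \ref{P2.2}(5). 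The resulting bound on $\eta_{\mathrm{orb},R}(\tau)$ separates into a sum of a term depending only on $h_1$ and one depending only on $h_2$, so taking infima over $h_1$ and $h_2$ separately, and recalling that $\eta_{\mathrm{orb},R}(\sigma_1)=\eta_{\mathrm{orb},R}(\sigma_1:(\tau_i)_{i=1}^{n'})$ by definition (and likewise for $\sigma_2$), yields (3).

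For (4), the key observation — and the only step requiring a moment's thought — is that for each fixed $h$ the map $\tau\mapsto\tau(h)+\pi_{\mathrm{orb},R}(h:(\tau\!\upharpoonright_{\mathcal{C}_R(\mathbf{x}_i)})_{i=1}^n)$ is weak*-upper-semicontinuous: the summand $\tau\mapsto\tau(h)$ is weak*-continuous, while $\tau\mapsto\pi_{\mathrm{orb},R}(h:(\tau\!\upharpoonright_{\mathcal{C}_R(\mathbf{x}_i)})_{i=1}^n)$ is the composition of the (weak*-continuous) restriction map $\tau\mapsto(\tau\!\upharpoonright_{\mathcal{C}_R(\mathbf{x}_i)})_{i=1}^n$ with $(\tau_i)_{i=1}^n\mapsto\pi_{\mathrm{orb},R}(h:(\tau_i)_{i=1}^n)$, and the latter is weak*-upper-semicontinuous precisely by Lemma \ref{L2.3}. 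Since an infimum of upper-semicontinuous functions is upper-semicontinuous, (4) follows. In short, the main (mild) obstacle is recognizing Lemma \ref{L2.3} as exactly the semicontinuity statement needed and handling the bookkeeping around restrictions of tracial states; all the substantive work was already done in Section 2.
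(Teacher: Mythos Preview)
Your proposal is correct and follows essentially the same route as the paper: items (1), (2), (5) are declared obvious from the definition and Proposition \ref{P2.2}, item (3) is obtained exactly as you do by testing at $h_1+h_2$ and invoking Proposition \ref{P2.2}(5), and item (4) is deduced from Lemma \ref{L2.3} (the paper phrases this via sequences, using metrizability of the weak* topology on $TS(\mathcal{C}_R(\mathbf{x}))$, while you phrase it as ``infimum of upper-semicontinuous functions is upper-semicontinuous'', but the content is identical).
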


\begin{proof}
(1), (2) and (5) are obvious by definition and Proposition \ref{P2.2}\,(1).

(3) For every $h_1 \in \mathcal{C}_R(\bigsqcup_{i=1}^{n'}\mathbf{x}_i)^{sa}$ and
$h_2 \in \mathcal{C}_R(\bigsqcup_{i=n'+1}^n\mathbf{x}_i)^{sa}$ one has
\begin{align*} 
\eta_{\mathrm{orb},R}(\tau) 
&\leq \tau(h_1+h_2) + \pi_{\mathrm{orb},R}(h_1+h_2:(\tau_i)_{i=1}^n) \\
&\leq \tau(h_1) + \tau(h_2) + \pi_{\mathrm{orb},R}(h_1:(\tau_i)_{i=1}^{n'}) + 
\pi_{\mathrm{orb},R}(h_2:(\tau_i)_{i=n'+1}^n)
\end{align*}
by Proposition \ref{P2.2}\,(5), which gives the desired assertion.

(4) Since the weak* topology on $TS(\mathcal{C}_R(\mathbf{x}))$ is metrizable, we may
only consider a sequence $\tau^{(k)}\in TS(\mathcal{C}_R(\mathbf{x}))$, $k\in\mathbb{N}$,
such that $\tau^{(k)}\to\tau$ in the weak* topology as $k\to\infty$. For each $1\le i\le n$ let
$\tau_i^{(k)}:=\tau^{(k)}\!\upharpoonright_{\mathcal{C}_R(\mathbf{x}_i)}$ and
$\tau_i:=\tau\!\upharpoonright_{\mathcal{C}_R(\mathbf{x}_i)}$, $1\le i\le n$; then
$\tau_i^{(k)}\to\tau_i$ in the weak* topology as $k\to\infty$. For every $h\in\mathcal{C}_R(\mathbf{x})^{sa}$, by Lemma \ref{L2.3} we get
\begin{align*}
\tau(h)+\pi_{\mathrm{orb},R}(h:(\tau_i)_{i=1}^n)
%&
\ge\limsup_{k\to\infty}\bigl\{\tau^{(k)}(h)
+\pi_{\mathrm{orb},R}(h:(\tau_i^{(k)})_{i=1}^n)\bigr\}
%\\&
\ge\limsup_{k\to\infty}\eta_{\mathrm{orb},R}(\tau^{(k)}),
\end{align*}
which gives
$\eta_{\mathrm{orb},R}(\tau)\ge\limsup_{k\to\infty}\eta_{\mathrm{orb},R}(\tau^{(k)})$.
\end{proof}

In addition to $\mathbf{x}_i=(x_{ij})_{j=1}^{r(i)}$ and $R>0$ we consider
multi-indeterminates $\mathbf{y}_i = (y_{ij})_{j=1}^{s(i)}$, $1 \leq i \leq n$, and
$S > 0$. We define $\mathcal{C}_S(\mathbf{y}_i)$, $1 \leq i \leq n$, and
$\mathcal{C}_S(\mathbf{y})$ with $\mathbf{y} := \mathbf{y}_1\sqcup\cdots\sqcup\mathbf{y}_n$
as before. Let $(\mathcal{M},\tau)$ be a tracial $W^*$-probability space, that is,
$\mathcal{M}$ is a finite von Neumann algebra and $\tau$ a faithful normal tracial state on
$\mathcal{M}$. We write $\Vert X\Vert_\infty$ for the operator norm of $X\in\mathcal{M}$.
The next theorem will play a key r\^{o}le in the rest of this section. Indeed, it enables us to define a kind of free mutual information by means of orbital $\eta$-entropy.

\begin{theorem}\label{T3.4}
For $1\le i\le n$ let $\mathbf{X}_i=(X_{ij})_{j=1}^{r(i)}$ and
$\mathbf{Y}_i=(Y_{ij})_{j=1}^{s(i)}$ be self-adjoint random
multi-variables in $\mathcal{M}^{sa}$ such that $\|X_{ij}\|_\infty\le R$ and
$\|Y_{ij}\|_\infty\le S$ for all $i,j$. Let
$\tau_\mathbf{X}^{(R)}\in TS(\mathcal{C}_R(\mathbf{x}))$ be the tracial state induced from
$\tau$ via the $*$-homomorphism determined by
$x_{ij}\in\mathcal{C}_R(\mathbf{x})\mapsto X_{ij}\in\mathcal{M}$, and
$\tau_\mathbf{Y}^{(S)}\in TS(\mathcal{C}_S(\mathbf{y}))$ be similarly induced via
$y_{ij}\in\mathcal{C}_S(\mathbf{y})\mapsto Y_{ij}\in\mathcal{M}$. If
$\mathbf{Y}_i\subset W^*(\mathbf{X}_i)$ in $\mathcal{M}$ for every $1\le i\le n$, then $\eta_{\mathrm{orb},R}(\tau_\mathbf{X}^{(R)})
\le\eta_{\mathrm{orb},S}(\tau_\mathbf{Y}^{(S)})$. 
\end{theorem}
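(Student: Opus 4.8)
The plan is to reduce the inequality $\eta_{\mathrm{orb},R}(\tau_\mathbf{X}^{(R)})\le\eta_{\mathrm{orb},S}(\tau_\mathbf{Y}^{(S)})$ to a comparison at the level of the pressure functions, using the hypothesis $\mathbf{Y}_i\subset W^*(\mathbf{X}_i)$ to transport a test function for $\mathbf{y}$ into a near-test-function for $\mathbf{x}$. First I would dispose of the trivial case: if some $\tau_i^\mathbf{X}:=\tau_\mathbf{X}^{(R)}\!\upharpoonright_{\mathcal{C}_R(\mathbf{x}_i)}$ fails f.d.a., then by Proposition \ref{P3.3}(2) the left-hand side is $-\infty$ and there is nothing to prove; so assume all $\tau_i^\mathbf{X}$ have f.d.a. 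Since $\mathbf{Y}_i\subset W^*(\mathbf{X}_i)$, microstates for $\mathbf{X}_i$ produce (via a polynomial approximation of $Y_{ij}$ in $\mathbf{X}_i$, truncated in operator norm) approximate microstates for $\mathbf{Y}_i$, so the $\tau_i^\mathbf{Y}$ also have f.d.a.

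The core of the argument is the following. Fix $g\in\mathcal{C}_S(\mathbf{y})^{sa}$; I want to produce $h\in\mathcal{C}_R(\mathbf{x})^{sa}$ with $\tau_\mathbf{X}^{(R)}(h)+\pi_{\mathrm{orb},R}(h:(\tau_i^\mathbf{X})_i)\le\tau_\mathbf{Y}^{(S)}(g)+\pi_{\mathrm{orb},S}(g:(\tau_i^\mathbf{Y})_i)+\varepsilon$ for arbitrary $\varepsilon>0$; taking infima over $g$ and then letting $\varepsilon\to0$ gives the theorem. Approximate $g$ in $\Vert-\Vert_S$ by a self-adjoint noncommutative polynomial $q$ in $\mathbf{y}$ (using Proposition \ref{P2.2}(2) to control the error). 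Now each $Y_{ij}$ lies in $W^*(\mathbf{X}_i)$, so by Kaplansky density there are noncommutative polynomials $P_{ij}$ in $\mathbf{x}_i$, with $\Vert P_{ij}(\mathbf{X}_i)\Vert_\infty\le S$ (arrange this, e.g., by composing with a fixed continuous function equal to the identity on $[-S,S]$ and bounded by $S$, which is itself a norm-limit of polynomials, or simply by the fact that $\mathbf{Y}_i\subset W^*(\mathbf{X}_i)$ and truncating), such that $\Vert P_{ij}(\mathbf{X}_i)-Y_{ij}\Vert_2$ is as small as we like; by the usual Powers–Størmer / moment argument this makes $\tau_\mathbf{X}^{(R)}(q((P_{ij}(\mathbf{x}_i))_{ij}))$ close to $\tau_\mathbf{Y}^{(S)}(q)$. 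Substituting $y_{ij}\mapsto P_{ij}(\mathbf{x}_i)$ in $q$ yields $h:=q((P_{ij}(\mathbf{x}_i))_{ij})\in\mathcal{C}_R(\mathbf{x})^{sa}$; note that this substitution is \emph{intra-block}, i.e.\ $y_{ij}$ is sent into $\mathcal{C}_R(\mathbf{x}_i)$, which is exactly what makes the orbital structure survive.

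It remains to compare the two pressures at $h$ and $g$. This is the step I expect to be the main obstacle, and it is handled at the level of finite-$N$ microstates. Given $(\mathbf{A}_i)_i\in\prod_i\Gamma_R(\tau_i^\mathbf{X};N,m,\delta)$, set $\mathbf{B}_i:=(P_{ij}(\mathbf{A}_i))_j$. One checks: (a) for $m,\delta$ suitably chosen (depending on $q$ and the degrees of the $P_{ij}$), $\mathbf{B}_i$ lies in $\Gamma_S(\tau_i^\mathbf{Y};N,m',\delta')$ for any prescribed $m',\delta'$, by a moment-matching computation — this uses only that tracial moments of $\mathbf{A}_i$ approximate those of $\mathbf{X}_i$ and that $\Vert P_{ij}(\mathbf{X}_i)-Y_{ij}\Vert_2$ is small; one must also invoke the operator-norm truncation to keep $\mathbf{B}_i\in(M_N(\mathbb{C})_S^{sa})^{r(i)}$, which costs a further small $2$-norm error that is absorbed into $\delta'$; (b) conjugation commutes with the polynomial substitution, i.e.\ $q((V_iP_{ij}(\mathbf{A}_i)V_i^*)_{ij})=q((P_{ij}(V_i\mathbf{A}_iV_i^*))_{ij})=h((V_i\mathbf{A}_iV_i^*)_i)$ up to the $\Vert q-g\Vert_S$-error, hence the exponential integrals over $\mathrm{U}(N)^n$ for $h$ at $(\mathbf{A}_i)_i$ and for $g$ at $(\mathbf{B}_i)_i$ differ by a factor $e^{\pm N^2(\Vert q-g\Vert_S+\cdots)}$. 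Taking logs, dividing by $N^2$, taking the supremum over $(\mathbf{A}_i)_i$ on the left (and noting $(\mathbf{B}_i)_i$ ranges inside $\prod_i\Gamma_S(\tau_i^\mathbf{Y};N,m',\delta')$), then $\limsup_{N\to\infty}$ and $\inf_{m',\delta'}$, yields
\[
\pi_{\mathrm{orb},R}(h:(\tau_i^\mathbf{X})_i)\le\pi_{\mathrm{orb},S}(g:(\tau_i^\mathbf{Y})_i)+C\bigl(\Vert q-g\Vert_S+\text{approximation errors}\bigr).
\]
Combining with $\tau_\mathbf{X}^{(R)}(h)\le\tau_\mathbf{Y}^{(S)}(g)+(\text{errors})$ and sending all the approximation errors to $0$ completes the argument. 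The delicate bookkeeping is entirely in step (a): one has to interleave the order of quantifiers ($m',\delta'$ prescribed first, then $m,\delta$ and the polynomial approximations chosen in response) exactly as in the proof of Proposition \ref{P2.4}.
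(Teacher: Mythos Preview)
Your proposal is correct and follows essentially the same strategy as the paper's proof: approximate the test function by a self-adjoint polynomial $q$ in $\mathbf{y}$, use Kaplansky density to replace each $y_{ij}$ by a polynomial $p_{ij}(\mathbf{x}_i)$, push $\mathbf{X}_i$-microstates forward to $\mathbf{Y}_i$-microstates, and exploit the intra-block nature of the substitution to identify the $\mathrm{U}(N)^n$-integrals. The paper's only refinements are (i) to build the cutoff into $h$ itself via continuous functional calculus, setting $h:=q\bigl(((f_S(p_{ij}(\mathbf{x}_i)))_j)_i\bigr)$, so that the comparison in your step~(b) becomes an exact identity rather than an approximation, and (ii) to first assume $\max_{i,j}\|Y_{ij}\|_\infty<S$ and then reduce the boundary case to this one via $r\mathbf{Y}\to\mathbf{Y}$ and the upper semicontinuity of Proposition~\ref{P3.3}(4).
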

\begin{proof} Firstly, assume that $\max_{i,j}\|Y_{ij}\|_\infty<S$, and let
$\alpha > \eta_{\mathrm{orb},S}(\tau_\mathbf{Y}^{(S)})$ be arbitrary. By definition there exists a non-commutative polynomial $q=q^*$ in $\mathbf{y}$ such that 
\begin{equation}\label{F3}
\tau(q(\mathbf{Y})) + \pi_{\mathrm{orb},S}(q:(\tau_{\mathbf{Y}_i}^{(S)})_{i=1}^n)
< \alpha, 
\end{equation}
where $\tau_{\mathbf{Y}_i}^{(S)}\in TS(\mathcal{C}_S(\mathbf{y}_i))$ induced by
$y_{ij}\mapsto Y_{ij}$ coincides with
$\tau_\mathbf{Y}^{(S)}\!\upharpoonright_{\mathcal{C}_R(\mathbf{y}_i)}$.
We may and do assume that all $\mathbf{X}_i$ (equivalently, all
$\tau_{\mathbf{X}_i}^{(R)}$) have f.d.a.; otherwise $\eta_{\mathrm{orb},R}(\tau_\mathbf{X}^{(R)}) = -\infty$ and the desired assertion trivially holds. Let $m \in \mathbb{N}$ and $\delta>0$ be arbitrarily given. Then, by the proof of \cite[Lemma 2.3]{BelinschiBercovici:PacificJMath03} (or the proof of \cite[Lemma 2.3]{Ueda:IUMJ1x}) there exist $m' \in \mathbb{N}$ and $\delta'>0$ so that 
\begin{equation}\label{F4}
(B_j)_{j=1}^{s(i)} \in \Gamma_\infty(\mathbf{Y}_i\,;N,m',\delta') \Longrightarrow 
(f_S(B_j))_{j=1}^{s(i)} \in \Gamma_S(\mathbf{Y}_i\,;N,m,\delta)
\end{equation}
holds for every $N \in \mathbb{N}$ and $1 \leq i \leq n$, where $f_S(t) := Sf(t/S)$ with
the continuous function $f : \mathbb{R} \to [-1,1]$ defined by
$f(t) = t$ for $-1 \leq t \leq 1$, $f(t) = -1$ for $t < -1$ and $f(t) = 1$ for $t > 1$.
For every $\varepsilon>0$, by the Kaplansky density theorem one can choose non-commutative polynomials $p_{ij} = p_{ij}^*$ in $\mathbf{x}_i$ for $1 \leq i \leq n$ and
$1 \leq j \leq s(i)$ in such a way that 
\begin{align} 
&\Vert p_{ij}(\mathbf{X}_i) \Vert_\infty 
\leq \Vert Y_{ij}\Vert_\infty < S, \label{F5}\\
&\Vert p_{ij}(\mathbf{X}_i) -Y_{ij} \Vert_{2,\tau} 
< \delta'/(2m' S^{m'-1}), \label{F6}\\
&\big|\tau\bigl(q\bigl(((p_{ij}(\mathbf{X}_i))_{j=1}^{s(i)})_{i=1}^n\bigr)\bigr)
- \tau(q(\mathbf{Y}))\big| < \varepsilon, \label{F7}
\end{align}
where $\|X\|_{2,\tau}:=\tau(X^*X)^{1/2}$, the $2$-norm on $\mathcal{M}$.
Looking at the $p_{ij}$ one can also find $m'' \in \mathbb{N}$ and $\delta''>0$ in such a
way that for every $1 \leq i \leq n$ and $N \in \mathbb{N}$ one has 
$$
\mathbf{A} \in \Gamma_R(\mathbf{X}_i\,;N,m'',\delta'') \Longrightarrow
\big|\mathrm{tr}_N(p_{i j_1}(\mathbf{A})\cdots p_{ij_l}(\mathbf{A}))
-\tau(p_{ij_1}(\mathbf{X}_i)\cdots p_{ij_l}(\mathbf{X}_i))\big| < \delta'/2
$$
whenever $1 \leq j_k \leq r(i)$, $1\leq k \leq l$ and $1 \leq l \leq m'$. (See the proof of
\cite[Theorem 2.6\,(6)]{Ueda:IUMJ1x}.) Hence, if
$\mathbf{A} \in \Gamma_R(\mathbf{X}_i\,;N,m'',\delta'')$, then by \eqref{F5} and \eqref{F6}
one has
\begin{align*}
&\big|\mathrm{tr}_N(p_{i j_1}(\mathbf{A})\cdots p_{ij_l}(\mathbf{A}))
- \tau(Y_{ij_1}\cdots Y_{ij_l})\big| \\
&\quad\leq 
\big|\mathrm{tr}_N(p_{i j_1}(\mathbf{A})\cdots p_{ij_l}(\mathbf{A}))-\tau(p_{ij_1}(\mathbf{X}_i)\cdots p_{ij_l}(\mathbf{X}_i))\big| \\
&\qquad\qquad+ 
\big|\tau(p_{ij_1}(\mathbf{X}_i)\cdots p_{ij_l}(\mathbf{X}_i))
-\tau(Y_{ij_1}\cdots Y_{ij_l})\big| < \delta'
\end{align*}
whenever $1 \leq j_k \leq s(i)$, $1 \leq k \leq l$ and $1 \leq l \leq m'$, so that
$(p_{ij}(\mathbf{A}))_{j=1}^{s(i)} \in \Gamma_\infty(\mathbf{Y}_i\,;N,m',\delta')$.
Thanks to \eqref{F4} we have shown that
$$
\mathbf{A} \in \Gamma_R(\mathbf{X}_i\,;N,m'',\delta'') \Longrightarrow
(f_S(p_{ij}(\mathbf{A})))_{j=1}^{s(i)} \in \Gamma_S(\mathbf{Y}_i\,;N,m,\delta)
$$
for every $1\le i\le n$. Set $h := q\bigl(\bigl((f_S(p_{ij}(\mathbf{x}_i)))_{j=1}^{s(i)}\bigr)_{i=1}^n\bigr)
\in \mathcal{C}_R(\mathbf{x})^{sa}$, where $f_S(p_{ij}(\mathbf{x}_i))\in\mathcal{C}_R(\mathbf{x}_i)^{sa}$ is defined via
continuous functional calculus of
$p_{ij}(\mathbf{x}_i)\in\mathcal{C}_R(\mathbf{x}_i)^{sa}$ by a continuous function $f_S$.
Note that for $\mathbf{A}_i \in \Gamma_R(\mathbf{X}_i\,;N,m'',\delta'')$ and
$V_i\in\mathrm{U}(N)$, $1\le i\le n$, we have
$$
h((V_i\mathbf{A}_iV_i^*)_{i=1}^n)
=q\bigl(\bigl((V_if_S(p_{ij}(\mathbf{A}_i))V_i^*)_{j=1}^{s(i)}
\bigr)_{i=1}^n\bigr)
=q((V_i\mathbf{B}_iV_i^*)_{i=1}^n)
$$
for some $\mathbf{B}_i\in\Gamma_S(\mathbf{Y}_i\,;N,m,\delta)$, $1\le i\le n$. Therefore,
\begin{align*}
&\pi_{\mathrm{orb},R}(h:(\tau_{\mathbf{X}_i}^{(R)})_{i=1}^n) %\\&\quad
\le\pi_{\mathrm{orb},R}(h:(\tau_{\mathbf{X}_i}^{(R)})_{i=1}^n\,;N,m'',\delta'') \\
&\quad= 
\sup_{\mathbf{A}_i \in \Gamma_R(\mathbf{X}_i\,;N,m",\delta") \atop 1 \leq i \leq n} 
\log \int_{\mathrm{U}(N)^n} d\gamma_{\mathrm{U}(N)}^{\otimes n}(V_i)
\exp\bigl(-N^2\mathrm{tr}_N\bigl(h(V_i\mathbf{A}_iV_i^*)_{i=1}^n)\bigr)\bigr) \\
&\quad\leq
\sup_{\mathbf{B}_i \in \Gamma_S(\mathbf{Y}_i\,;N,m,\delta) \atop 1 \leq i \leq n} 
\log \int_{\mathrm{U}(N)^n} d\gamma_{\mathrm{U}(N)}^{\otimes n}(V_i)
\exp\bigl(-N^2\mathrm{tr}_N\bigl(q((V_i \mathbf{B}_i V_i^*)_{i=1}^n)\bigr)\bigr) \\
&\quad= \pi_{\mathrm{orb},S}(q:(\tau_{\mathbf{Y}_i}^{(S)})_{i=1}^n\,;N,m,\delta).
\end{align*}
Furthermore, since $f_S(p_{ij}(\mathbf{X}_i)) = p_{ij}(\mathbf{X}_i)$ by \eqref{F5},
we get $\tau(h(\mathbf{X}))<\tau(q(\mathbf{Y}))+\varepsilon$ thanks to \eqref{F7}. We
thus obtain
\begin{align*} 
\eta_{\mathrm{orb},R}(\tau_\mathbf{X}^{(R)}) 
&\leq 
\tau(h(\mathbf{X})) + \pi_{\mathrm{orb},R}(h:(\tau_{\mathbf{X}_i}^{(R)})_{i=1}^n) \\
&\leq 
\tau(q(\mathbf{Y})) + \varepsilon 
+ \pi_{\mathrm{orb},S}(q:(\tau_{\mathbf{Y}_i}^{(S)})_{i=1}^n\,;m,\delta).
\end{align*} 
Since $m \in \mathbb{N}$ and $\delta, \varepsilon>0$ are arbitrary, one has $$\eta_{\mathrm{orb},R}(\tau_\mathbf{X}^{(R)})
\leq \tau(q(\mathbf{Y})) + \pi_{\mathrm{orb},S}(q:(\tau_{\mathbf{Y}_i}^{(S)})_{i=1}^n)
< \alpha$$
thanks to \eqref{F3}. This implies the desired inequality when
$\max_{i,j}\|Y_{ij}\|_\infty<S$.

Finally, when $\max_{i,j}\|Y_{ij}\|_\infty\le S$, as in the proof of
\cite[Corollary 2.7]{Ueda:IUMJ1x} we replace $Y_{ij}$ with $rY_{ij}$ where $0<r<1$,
so that $\eta_{\mathrm{orb},R}(\tau_\mathbf{X}^{(R)})\le
\eta_{\mathrm{orb},S}(\tau_{r\mathbf{Y}}^{(S)})$ follows from the case that already proved.
Since it is obvious that $\tau_{r\mathbf{Y}}^{(S)}\to\tau_\mathbf{Y}^{(S)}$ in the weak* topology as
$r\nearrow1$, it follows from Proposition \ref{P3.3}\,(4) that
$\eta_{\mathrm{orb},R}(\tau_\mathbf{X}^{(R)})\le
\limsup_{r\nearrow1}\eta_{\mathrm{orb},R}(\tau_{r\mathbf{Y}}^{(S)})
\le\eta_{\mathrm{orb},S}(\tau_\mathbf{Y}^{(S)})$. 
\end{proof}  

In the particular case where $\mathbf{X}_i=\mathbf{Y}_i$ for every $1\le i\le n$, the
above theorem says that $\eta_{\mathrm{orb},R}(\tau_\mathbf{X}^{(R)})$ is independent
of the choice of $R\ge\max_{i,j}\|X_{ij}\|_\infty$, and thus the next definition is
indeed justified.

\begin{definition}\label{D3.1}
Let $(\mathcal{M},\tau)$ be a $W^*$-probability space. For self-adjoint random multi-variables
$\mathbf{X}_i$, $1 \leq i \leq n$, in $\mathcal{M}$ we define 
$$
\eta_\mathrm{orb}(\mathbf{X}_1,\dots,\mathbf{X}_n)
:= \eta_{\mathrm{orb},R}(\tau_\mathbf{X}^{(R)}),
$$ 
where $\mathbf{X} := \mathbf{X}_1\sqcup\cdots\sqcup\mathbf{X}_n$ and
$R \geq \max_{i,j}\Vert X_{ij}\Vert_\infty$, and call it the \emph{orbital $\eta$-entropy}
of $\mathbf{X}_1,\dots,\mathbf{X}_n$. 
\end{definition}

Basic properties of $\eta_\mathrm{orb}$ for self-adjoint random multi-variables are summarized as follows:

\begin{theorem}\label{T3.5} The orbital $\eta$-entropy
$\eta_\mathrm{orb}(\mathbf{X}_1,\dots,\mathbf{X}_n)$ of self-adjoint random multi-variables
$\mathbf{X}_1,\dots,\mathbf{X}_n$ in $\mathcal{M}$ enjoys the following
properties{\rm:}
\begin{itemize}
\item[(1)] $\eta_\mathrm{orb}(\mathbf{X}_1,\dots,\mathbf{X}_n) \leq 0$.
\item[(2)] $\eta_\mathrm{orb}(\mathbf{X}_1,\dots,\mathbf{X}_n) = -\infty$ if one of the
$\mathbf{X}_i$ does not have f.d.a. 
\item[(3)] $\eta_\mathrm{orb}(\mathbf{X}_1,\dots,\mathbf{X}_n) \leq \eta_\mathrm{orb}(\mathbf{X}_1,\dots,\mathbf{X}_{n'}) + \eta_\mathrm{orb}(\mathbf{X}_{n'+1},\dots,\mathbf{X}_n)$ for $1\le n'<n$. 
\item[(4)] If $\mathbf{X}_i=(X_{ij})_{j=1}^{r(i)}$ and
$\mathbf{X}_i^{(k)}=(X_{ij}^{(k)})_{j=1}^{r(i)}$ are in $\mathcal{M}^{sa}$ for
$1\le i\le n$ and $k\in\mathbb{N}$ and $X_{ij}^{(k)}\to X_{ij}$ strongly as $k\to\infty$
for every $i,j$, then
$$
\eta_\mathrm{orb}(\mathbf{X}_1,\dots,\mathbf{X}_n) \geq 
\limsup_{k\to\infty}\eta_\mathrm{orb}(\mathbf{X}_1^{(k)},\dots,\mathbf{X}_n^{(k)}). 
$$
\item[(5)] If $\mathbf{Y}_i \subset W^*(\mathbf{X}_i)$ in $\mathcal{M}$ for every $1 \leq i \leq n$, then we have $\eta_\mathrm{orb}(\mathbf{X}_1,\dots,\mathbf{X}_n) \leq \eta_\mathrm{orb}(\mathbf{Y}_1,\dots,\mathbf{Y}_n)$. In particular, $\eta_\mathrm{orb}(\mathbf{X}_1,\dots,\mathbf{X}_n)$ depends only on the von Neumann subalgebras $W^*(\mathbf{X}_i)$ generated by $\mathbf{X}_i$ in $\mathcal{M}$.
\item[(6)] $\chi_\mathrm{orb}(\mathbf{X}_1,\dots,\mathbf{X}_n) \leq \eta_\mathrm{orb}(\mathbf{X}_1,\dots,\mathbf{X}_n)$ holds in general, where
$\chi_\mathrm{orb}(\mathbf{X}_1,\dots,\mathbf{X}_n)$ is the orbital free entropy
introduced in \cite{HiaiMiyamotoUeda:IJM09,Ueda:IUMJ1x}.
\item[(7)] $\eta_\mathrm{orb}(\mathbf{X}) = 0$ if $\mathbf{X}$ has f.d.a.{\rm;} otherwise $-\infty$. 
\item[(8)] If $\mathbf{X}_1,\dots,\mathbf{X}_n$ are freely independent and each $\mathbf{X}_i$ has f.d.a., then $\eta_\mathrm{orb}(\mathbf{X}_1,\dots,\mathbf{X}_n) = 0$. 
\end{itemize}
\end{theorem}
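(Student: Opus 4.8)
Items (1), (2), (3) and (5) should follow essentially for free once we unwind $\eta_\mathrm{orb}(\mathbf{X}_1,\dots,\mathbf{X}_n)=\eta_{\mathrm{orb},R}(\tau_\mathbf{X}^{(R)})$ for a common $R\ge\max_{i,j}\|X_{ij}\|_\infty$ and quote the corresponding facts from \S2--\S3. Namely, (1) and (2) are Proposition \ref{P3.3}\,(1) and (2); for (3) one observes that $\tau_\mathbf{X}^{(R)}\!\upharpoonright_{\mathcal{C}_R(\sqcup_{i=1}^{n'}\mathbf{x}_i)}$ is the tracial state induced by $(\mathbf{X}_1,\dots,\mathbf{X}_{n'})$, so that $\eta_{\mathrm{orb},R}$ of it is $\eta_\mathrm{orb}(\mathbf{X}_1,\dots,\mathbf{X}_{n'})$ (and likewise for the tail), whence Proposition \ref{P3.3}\,(3) is exactly (3); and (5) is Theorem \ref{T3.4} together with the $R$-independence noted right after it, applied once for the displayed inequality and in both directions for the ``in particular'' statement that $\eta_\mathrm{orb}$ depends only on the $W^*(\mathbf{X}_i)$.

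For (4), I would first use (5) to reduce to a uniform norm bound: with $R>\max_{i,j}\|X_{ij}\|_\infty$, the cut-off $f_R(X_{ij}^{(k)})$ of $X_{ij}^{(k)}$ at $\pm R$ lies in $W^*(\mathbf{X}_i^{(k)})$ and satisfies $f_R(X_{ij})=X_{ij}$, so by (5) replacing $X_{ij}^{(k)}$ by $f_R(X_{ij}^{(k)})$ does not decrease the right-hand side of (4); thus we may assume all the variables have operator norm $\le R$. Then strong convergence $X_{ij}^{(k)}\to X_{ij}$ forces every mixed $\tau$-moment to converge, hence $\tau_{\mathbf{X}^{(k)}}^{(R)}\to\tau_\mathbf{X}^{(R)}$ in the weak* topology, and Proposition \ref{P3.3}\,(4) finishes. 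The only point requiring care is that the cut-offs $f_R(X_{ij}^{(k)})$ still converge strongly, which is routine given the tacit uniform norm bound on the sequence.

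Item (6) is the orbital counterpart of the inequality $\chi\le\eta_R$ of \cite{Hiai:CMP05}, proved by restricting the pressure integral to a microstate region. Fix $R\ge\max_{i,j}\|X_{ij}\|_\infty$, write $\tau:=\tau_\mathbf{X}^{(R)}$ and $\tau_i:=\tau\!\upharpoonright_{\mathcal{C}_R(\mathbf{x}_i)}$, and let $h\in\mathcal{C}_R(\mathbf{x})^{sa}$ and $\varepsilon>0$. Choose a self-adjoint polynomial $p$ in $\mathbf{x}$ with $\|h-p\|_R<\varepsilon/3$, then $m\in\mathbb{N}$ and $\delta>0$ (looking at $p$) so that $(U_i\mathbf{A}_iU_i^*)_{i=1}^n\in\Gamma_R(\mathbf{X};N,m,\delta)$ forces $\mathrm{tr}_N(h((U_i\mathbf{A}_iU_i^*)_{i=1}^n))<\tau(h(\mathbf{X}))+\varepsilon$. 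For any $\mathbf{A}_i\in\Gamma_R(\mathbf{X}_i;N,m,\delta)=\Gamma_R(\tau_i;N,m,\delta)$, restricting the integral in \eqref{F2} to the orbital microstate region $\Omega_N:=\{(U_i)_{i=1}^n\in\mathrm{U}(N)^n:(U_i\mathbf{A}_iU_i^*)_{i=1}^n\in\Gamma_R(\mathbf{X};N,m,\delta)\}$ gives
\[
\pi_{\mathrm{orb},R}(h:(\tau_i)_{i=1}^n;N,m,\delta)\ge\log\gamma_{\mathrm{U}(N)}^{\otimes n}(\Omega_N)-N^2\bigl(\tau(h(\mathbf{X}))+\varepsilon\bigr).
\]
Since this holds for each choice of the $\mathbf{A}_i$, taking the supremum over them on the right (it passes through $\log$) and then $\limsup_{N\to\infty}\frac{1}{N^2}$ shows $\pi_{\mathrm{orb},R}(h:(\tau_i)_{i=1}^n;m,\delta)\ge\chi_\mathrm{orb}(\mathbf{X}_1,\dots,\mathbf{X}_n;m,\delta)-\tau(h(\mathbf{X}))-\varepsilon$, by the definition of $\chi_\mathrm{orb}$ \cite{HiaiMiyamotoUeda:IJM09,Ueda:IUMJ1x}. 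As this persists for all larger $m$ and smaller $\delta$, letting $m\to\infty$, $\delta\searrow0$ and then $\varepsilon\searrow0$ gives $\tau(h)+\pi_{\mathrm{orb},R}(h:(\tau_i)_{i=1}^n)\ge\chi_\mathrm{orb}(\mathbf{X}_1,\dots,\mathbf{X}_n)$, and infimizing over $h$ yields (6).

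For (7) with $n=1$, unitary invariance of $\mathrm{tr}_N$ makes the integrand in \eqref{F2} independent of $V$, so $\pi_{\mathrm{orb},R}(h:\tau_1;N,m,\delta)=-N^2\inf_{\mathbf{A}\in\Gamma_R(\tau_1;N,m,\delta)}\mathrm{tr}_N(h(\mathbf{A}))$; for a polynomial $h$ this infimum converges to $\tau_1(h)$ as $m\to\infty$, $\delta\searrow0$ (f.d.a.\ ensures non-emptiness), whence $\pi_{\mathrm{orb},R}(h:\tau_1)=-\tau_1(h)$, which Proposition \ref{P2.2}\,(2) propagates to all $h$; therefore $\eta_\mathrm{orb}(\mathbf{X})=\inf_h\{\tau_1(h)-\tau_1(h)\}=0$ (and $-\infty$ without f.d.a.\ by (2)). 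For (8), (1) gives $\le0$, and for the reverse one reruns the argument of (6) with $\Omega_N$ of full asymptotic measure: fixing microstates $\mathbf{A}_i^{(N)}\in\Gamma_R(\mathbf{X}_i;N,m,\delta)$ of the freely independent $\mathbf{X}_i$, Voiculescu's asymptotic freeness of independent Haar unitaries against deterministic matrices (in the form of convergence in probability, as used in \cite{HiaiMiyamotoUeda:IJM09}) yields $\gamma_{\mathrm{U}(N)}^{\otimes n}(\Omega_N)\to1$, so $\frac{1}{N^2}\log\gamma_{\mathrm{U}(N)}^{\otimes n}(\Omega_N)\to0$ and hence $\pi_{\mathrm{orb},R}(h:(\tau_i)_{i=1}^n)\ge-\tau_\mathbf{X}^{(R)}(h)$ for every $h$, i.e.\ $\eta_\mathrm{orb}(\mathbf{X}_1,\dots,\mathbf{X}_n)\ge0$ (alternatively, (8) follows from (6) and the vanishing of $\chi_\mathrm{orb}$ for freely independent tuples). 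I expect the main obstacle to be the bookkeeping in (6) and (8)---reconciling the supremum over individual microstates built into $\pi_{\mathrm{orb},R}$ with the way microstates enter the definition of $\chi_\mathrm{orb}$, and invoking asymptotic freeness in precisely the concentration form needed---while everything else reduces cleanly to Propositions \ref{P2.2} and \ref{P3.3} and Theorem \ref{T3.4}.
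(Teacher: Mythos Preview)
Your proposal is correct and, for the substantive item (6), follows exactly the paper's approach: restrict the Haar integral in \eqref{F2} to the orbital microstate set $\Gamma_\mathrm{orb}(\mathbf{X}_1,\dots,\mathbf{X}_n:(\mathbf{A}_i)_{i=1}^n;N,m,\delta)$, bound the integrand there by $\exp(-N^2(\tau(h)+\varepsilon))$, and pass to $\bar{\chi}_{\mathrm{orb},R}$ via the supremum over the $\mathbf{A}_i$. Items (1)--(3), (5) you handle just as the paper does, by quoting Proposition~\ref{P3.3} and Theorem~\ref{T3.4}.

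Two places where you take a longer road than necessary. For (4), your cut-off detour via (5) is superfluous: strong convergence $X_{ij}^{(k)}\to X_{ij}$ already forces $\sup_k\|X_{ij}^{(k)}\|_\infty<\infty$ (uniform boundedness principle), so the paper simply picks $R$ dominating all $\|X_{ij}^{(k)}\|_\infty$ and invokes Proposition~\ref{P3.3}\,(4) directly; you yourself note this ``tacit uniform norm bound'' at the end, which makes the cut-off step redundant. For (7) and (8), the paper does not argue directly at all but just combines (1) and (6) with the known $\chi_\mathrm{orb}$ facts from \cite[Theorem 2.6\,(3),(8)]{Ueda:IUMJ1x}: since $\chi_\mathrm{orb}(\mathbf{X})=0$ for a single f.d.a.\ multi-variable and $\chi_\mathrm{orb}=0$ for freely independent f.d.a.\ tuples, (6) sandwiches $\eta_\mathrm{orb}$ between $0$ and $0$. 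Your direct computations (unitary invariance for $n=1$; asymptotic freeness for (8)) are valid and self-contained, but they essentially reprove those $\chi_\mathrm{orb}$ statements inside the present argument rather than citing them.
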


The converse of the above (8) also holds, but its proof needs the notion of `orbital equilibrium' tracial states; hence we postpone the complete assertion to \S5.

\begin{proof}
(1)--(3) are obvious from Proposition \ref{P3.3}\,(1)--(3).

(4) Since $\sup_k\|X_{ij}^{(k)}\|_\infty<\infty$ for each $i,j$, we can choose $R>0$ so
that $\|X_{ij}^{(k)}\|_\infty\le R$ for all $i,j,k$. Let
$\tau_\mathbf{X}^{(R)},\tau_{\mathbf{X}^{(k)}}^{(R)}\in TS(\mathcal{C}_R(\mathbf{x}))$ be
defined as in Theorem \ref{T3.4}. Then the weak* convergence
$\tau_{\mathbf{X}^{(k)}}^{(R)}\to\tau_\mathbf{X}^{(R)}$ is an immediate consequence of
the strong convergence $X_{ij}^{(k)}\to X_{ij}$ for each $i,j$. Hence the result follows
from Proposition \ref{P3.3}\,(4).

(5) immediately follows from Theorem \ref{T3.4}. 

(6) We may and do assume that $\chi_\mathrm{orb}(\mathbf{X}_1,\dots,\mathbf{X}_n) > -\infty$; in particular, all $\mathbf{X}_i$ have f.d.a. Choose and fix a cut-off constant $R>0$ in such a way that $\max_{ij}\Vert X_{ij}\Vert_\infty \leq R$. Let $p=p^*$ be an arbitrary non-commutative polynomial in $\mathbf{x}$ and $\varepsilon>0$ be arbitrarily given. Then, looking at $p$ one can choose $m_0 \in \mathbb{N}$ and $\delta_0>0$ in such a way that, for any $N\in\mathbb{N}$ and any $(\mathbf{A})_{i=1}^n\in\prod_{i=1}^n(M_N(\mathbb{C})_R^{sa})^{r(i)}$, if $m \geq m_0$ and $0 < \delta \leq \delta_0$, then $(V_i)_{i=1}^n \in \Gamma_\mathrm{orb}(\mathbf{X}_1,\dots,\mathbf{X}_n:(\mathbf{A}_i)_{i=1}^n\,;N,m,\delta)$ implies 
$|\mathrm{tr}_N(p((V_i\mathbf{A}_i V_i^*)_{i=1}^n)) - \tau(p(\mathbf{X}))| < \varepsilon$ so that $-\mathrm{tr}_N(p((V_i\mathbf{A}_i V_i^*)_{i=1}^n)) > -(\tau(p(\mathbf{X}))+\varepsilon)$ holds. (See \cite[\S2]{Ueda:IUMJ1x} for the definition of
$\Gamma_\mathrm{orb}(\mathbf{X}_1,\dots,\mathbf{X}_n:(\mathbf{A}_i)_{i=1}^n\,;N,m,\delta)$.) Therefore, for any $N\in\mathbb{N}$ and any $\mathbf{A}_i \in \Gamma_R(\mathbf{X}_i\,;N,m,\delta)$, $1 \leq i \leq n$, with $m \geq m_0$ and $0 < \delta \leq \delta_0$ we get 
\begin{align*} 
&\int_{\mathrm{U}(N)^n} d\gamma_{\mathrm{U}(N)}^{\otimes n}(V_i) \exp\bigl(-N^2\mathrm{tr}_N(p((V_i\mathbf{A}_i V_i^*)_{i=1}^n))\bigr) \\
&\quad\geq 
\int_{\Gamma_\mathrm{orb}(\mathbf{X}_1,\dots,\mathbf{X}_n:(\mathbf{A}_i)_{i=1}^n\,;N,m,\delta)} d\gamma_{\mathrm{U}(N)}^{\otimes n}(V_i) \exp\bigl(-N^2\mathrm{tr}_N(p((V_i\mathbf{A}_i V_i^*)_{i=1}^n))\bigr) \\
&\quad> 
\exp\bigl(-N^2(\tau(p(\mathbf{X}))+\varepsilon)\bigr) \gamma_{\mathrm{U}(N)}^{\otimes n}\big(\Gamma_\mathrm{orb}(\mathbf{X}_1,\dots,\mathbf{X}_n:(\mathbf{A}_i)_{i=1}^n\,;N,m,\delta)\big), 
\end{align*}
and hence
$$
\pi_{\mathrm{orb},R}(p:(\tau_{\mathbf{X}_i}^{(R)})_{i=1}^n\,;N,m,\delta) 
\geq 
-N^2(\tau(p(\mathbf{X}))+\varepsilon) + \bar{\chi}_{\mathrm{orb},R}(\mathbf{X}_1,\dots,\mathbf{X}_n\,;N,m,\delta).
$$
By \cite[Proposition 2.4]{Ueda:IUMJ1x} (that contains the definition of
$\bar{\chi}_{\mathrm{orb},R}(\mathbf{X}_1,\dots,\mathbf{X}_n\,;N,m,\delta)$)
and \cite[Corollary 2.7]{Ueda:IUMJ1x} we get
$$
\pi_{\mathrm{orb},R}(p:(\tau_{\mathbf{X}_i}^{(R)})_{i=1}^n) 
\geq 
-\tau(p(\mathbf{X}))-\varepsilon + \chi_\mathrm{orb}(\mathbf{X}_1,\dots,\mathbf{X}_n).
$$
This yields the desired assertion.

(7) and (8) follow from the corresponding facts on $\chi_\mathrm{orb}$ \cite[Theorem 2.6\,(3),\,(8)]{Ueda:IUMJ1x} thanks to (6).      
\end{proof} 

\begin{remark}\label{R3.6} Similarly to the relation between $\chi$ and $\eta_R$ the equality $\chi_\mathrm{orb} = \eta_\mathrm{orb}$ in Theorem \ref{T3.5}\,(6) does not hold in general as follows. Choose two $n$-tuples $\mathbf{Y} = (Y_i)_{i=1}^n$, $\mathbf{Z} = (Z_i)_{i=1}^n$ of self-adjoint random variables in a tracial $W^*$-probability space in such a way that (i) $\mathbf{Y}$ is not a freely independent family but $\chi(\mathbf{Y}) > -\infty$, (ii) $\mathbf{Z}$ is a freely independent family, and (iii) $Y_i$ and $Z_i$ have the same distribution for every $1 \leq i \leq n$. Letting $R := \max_i\Vert Y_i\Vert_\infty \ (= \max_i\Vert Z_i\Vert_\infty)$ we obtain two tracial states $\tau_\mathbf{Y}^{(R)}, \tau_\mathbf{Z}^{(R)}$ on $\mathcal{C}_R(\mathbf{x})$ with $\mathbf{x} = (x_1)\sqcup\cdots\sqcup(x_n)$. Set $\tau := \frac{1}{2}(\tau_\mathbf{Y}^{(R)}+\tau_\mathbf{Z}^{(R)}) \in TS(\mathcal{C}_R(\mathbf{x}))$, and via the GNS representation associated with $\tau$ we obtain a new $n$-tuple $\mathbf{X} = (X_i)_{i=1}^n$ of self-adjoint random variables in a tracial $W^*$-probability space so that $\tau_\mathbf{X}^{(R)} = \tau$ holds; in particular, the distribution of $X_i$ coincides with those of $Y_i$ and $Z_i$ for every $1\leq i \leq n$. We have $\eta_\mathrm{orb}(X_1,\dots,X_n) = \eta_{\mathrm{orb},R}(\tau) \geq \frac{1}{2}(\eta_{\mathrm{orb},R}(\tau_\mathbf{Y}^{(R)}) + \eta_{\mathrm{orb},R}(\tau_\mathbf{Z}^{(R)})) \geq \frac{1}{2}(\chi_\mathrm{orb}(\mathbf{Y}) + \chi_\mathrm{orb}(\mathbf{Z})) > -\infty$ by Proposition \ref{P3.3}\,(5), Theorem \ref{T3.5}\,(6), \cite[Theorem 2.6]{HiaiMiyamotoUeda:IJM09} and $\chi(Z_i) = \chi(Y_i) > -\infty$ for all $1\leq i \leq n$. On the other hand, the degenerate convexity of $\chi$ \cite[$\chi.8$]{Voiculescu:Survey} implies that $\chi(\mathbf{X})=-\infty$. This implies $\chi_\mathrm{orb}(X_1,\dots,X_n)=-\infty$ thanks to \cite[Theorem 2.6]{HiaiMiyamotoUeda:IJM09} again, since $\chi(X_i) = \chi(Y_i) > -\infty$ for all $1\le i\le n$.
\end{remark}

\begin{remarks}\label{R3.7} (1) If the von Neumann algebra $W^*(\mathbf{X})$ is a factor, then $\tilde{\chi}_\mathrm{orb}(\mathbf{X}_1,\dots,\mathbf{X}_n)$, one of alternative approaches to the orbital free entropy due to Biane and Dabrowski \cite[\S7]{BianeDabrowski:AdvMath13}, agrees with $\chi_\mathrm{orb}(\mathbf{X}_1,\dots,\mathbf{X}_n)$ due to \cite[Theorem 7.3\,(6)]{BianeDabrowski:AdvMath13}. Hence $\tilde{\chi}_\mathrm{orb} \leq \eta_\mathrm{orb}$ holds too under the factoriality assumption. However, we do not know, at the present moment, whether or not the inequality holds without the assumption. It is worth noting that $\tilde\chi\le\eta_R$ holds in general
for two concavifications $\tilde\chi$ in \cite{BianeDabrowski:AdvMath13} and $\eta_R$ in \cite{Hiai:CMP05} of microstate free entropy, see \cite[Remark 4.5]{BianeDabrowski:AdvMath13}. 

(2) Since the finiteness of $\chi_\mathrm{orb}$ may be thought of as a kind of free independence, the known factoriality result on free product von Neumann algebras, see \cite[Theorem 4.1]{Ueda:AdvMath11}, suggest the following plausible conjecture: If $\chi_\mathrm{orb}(\mathbf{X}_1,\dots,\mathbf{X}_n) > -\infty$ and if at least one of the $W^*(\mathbf{X}_i)$ is diffuse, then $W^*(\mathbf{X}_1,\dots,\mathbf{X}_n)$ is a factor. Remark that a related result was given in \cite[Corollary 4.3]{DabrowskiIoana:Preprint12} based on Voiculescu's liberation theory.
\end{remarks}

\section{Orbital equilibrium tracial states}

In the notations as before let $\tau\in TS(\mathcal{C}_R(\mathbf{x}))$ and
$h\in\mathcal{C}_R(\mathbf{x})^{sa}$ be given. In \cite[p.238]{Hiai:CMP05} we said that $\tau$ is
an \emph{equilibrium} tracial state associated with $h$ if the equality
$\eta_R(\tau)=\tau(h)+\pi_R(h)$ holds. Its orbital analog is defined in a similar way.

\begin{definition}\label{D4.1}
We say that $\tau$ is an \emph{orbital equilibrium} tracial state associated with $h$ if
the equality
$$
\eta_{\mathrm{orb},R}(\tau)
= \tau(h) + \pi_{\mathrm{orb},R}(h:(\tau_i)_{i=1}^n)
$$
holds with finite value, where $\tau_i:=\tau\!\upharpoonright_{\mathcal{C}_R(\mathbf{x}_i)}$
for $1\le i\le n$. (Note that this restriction of the $\tau_i$ is indeed necessary for the above equality to hold with finite value, see Proposition \ref{P3.1}.)
\end{definition}

For any $h\in\mathcal{C}_R(\mathbf{x})^{sa}$, there is an equilibrium tracial state associated with $h$, and also given any $(\tau_i)_{i=1}^n$ with $\tau_i\in TS(\mathcal{C}_R(\mathbf{x}_i))$, $1\le i\le n$, there is an orbital equilibrium tracial state $\tau$ associated with $h$ such that $\tau\!\upharpoonright_{\mathcal{C}_R(\mathbf{x}_i)}=\tau_i$, $1\le i\le n$ (thanks to Corollary 3.2).
Moreover, a general theory ensures that, given any $(\tau_i)_{i=1}^n$ as above, the set of all $h \in\mathcal{C}_R(\mathbf{x})^{sa}$ for which there is a unique orbital equilibrium tracial state $\tau$ with $\tau\!\upharpoonright_{\mathcal{C}_R(\mathbf{x}_i)}=\tau_i$, $1\le i\le n$, forms a dense $G_\delta$-set. This is seen in the exactly same way as in the remark after \cite[Theorem 3.4]{Hiai:CMP05} (due to Proposition \ref{P2.2}\,(2),\,(4)).

\subsection{Notational conventions} 
Let $\tau \in TS(\mathcal{C}_R(\mathbf{x}))$ be given. Via the GNS representation associated with $\tau$ one obtains a finite von Neumann algebra $\mathcal{M}$ from $\mathcal{C}_R(\mathbf{x})$, and can `extend' $\tau$ to the whole $\mathcal{M}$ as a faithful normal tracial state. The original $x_{ij}$ give, via the representation, self-adjoint random variables $X_{ij} \in \mathcal{M}$, and set $\mathbf{X}_i := (X_{ij})_{j=1}^{r(i)}$, $1 \leq i \leq n$. We define the orbital free entropy $\chi_\mathrm{orb}(\tau)$ of the given $\tau$ to be $\chi_\mathrm{orb}(\mathbf{X}_1,\dots,\mathbf{X}_n)$. (Remark here that $\chi_\mathrm{orb}(\mathbf{X}_1,\dots,\mathbf{X}_n)=\chi_{\mathrm{orb},R}(\mathbf{X}_1,\dots,\mathbf{X}_n)$ for every $R \geq \max_{i,j}\Vert X_{ij}\Vert_\infty$ possibly with $R=\infty$, see \cite[Lemma 2.5, Corollary 2.7]{Ueda:IUMJ1x}, and the corresponding fact $\chi=\chi_R$ is due to  \cite{BelinschiBercovici:PacificJMath03} and \cite[Propositions 2.4, 2.6, 3.6 (b)]{Voiculescu:InventMath94} with a simple convergence argument.) We also write $\Gamma_\mathrm{orb}(\tau:\mathbf{A}\,;N,m,\delta)$ to be $\Gamma_\mathrm{orb}(\mathbf{X}_1,\dots,\mathbf{X}_n:\mathbf{A}\,;N,m,\delta)$ for a given $\mathbf{A}=(\mathbf{A}_i)_{i=1}^n \in \prod_{i=1}^n (M_N(\mathbf{C})^{sa})^{r(i)}$. Remark that $\eta_{\mathrm{orb},R}(\tau) = \eta_\mathrm{orb}(\mathbf{X}_1,\dots,\mathbf{X}_n)$ by definition, and hence $\chi_\mathrm{orb}(\tau) \leq \eta_{\mathrm{orb},R}(\tau)$ holds true by Theorem \ref{T3.5}\,(6).
   
\subsection{Criterion of orbital equilibrium} Let $\Xi(N) = (\Xi_i(N))_{i=1}^n$ with $\Xi_i(N) = (\xi_{ij}(N))_{j=1}^{r(i)} \in (M_N(\mathbb{C})_R^{sa})^{r(i)}$, $1 \leq i \leq n$, $N \in \mathbb{N}$, be a sequence of microstates, and assume that the tracial states $g \in \mathcal{C}_R(\mathbf{x}_i) \mapsto \mathrm{tr}_N(g(\Xi_i(N)))$ converges to some $\tau_i \in TS(\mathcal{C}_R(\mathbf{x}_i))$ in the weak* topology as $N\to\infty$ for every $1 \leq i \leq n$.  Fix $h \in \mathcal{C}_R(\mathbf{x})^{sa}$, and define the `\emph{orbital Gibbs micro-emsemble}' $\mu_N^{(h,\Xi(N))}$ on $\mathrm{U}(N)^n$ to be a probability measure
$$
\frac{1}{Z_N^{(h,\Xi(N))}}\exp(-N^2\mathrm{tr}_N(h((V_i\Xi_i(N)V_i^*)_{i=1}^n)))\,d\gamma_{\mathrm{U}(N)}^{\otimes n}(V_i),
$$
and also define the `\emph{orbital mean tracial state}' $\tau_N^{(h,\Xi(N))} \in TS(\mathcal{C}_R(\mathbf{x}))$ by
$$
\tau_N^{(h,\Xi(N))}(f):=\int_{\mathrm{U}(N)^n}
\,d\mu_N^{(h,\Xi(N))}(V_i)\,\mathrm{tr}_N(f((V_i\Xi_i(N)V_i^*)_{i=1}^n))),
\quad f\in\mathcal{C}_R(\mathbf{x}).
$$

\begin{proposition}\label{P4.2} 
Let $\tau \in TS(\mathcal{C}_R(\mathbf{x}))$ with $\tau\!\upharpoonright_{\mathcal{C}_R(\mathbf{x}_i)} = \tau_i$, $1 \leq i \leq n$, be given. Assume that every $C^*$-subalgebra $\mathcal{C}_R(\mathbf{x}_i)$ generates a hyperfinite von Neumann algebra in the GNS representation associated with $\tau$ and that 
$$
\lim_{N\to\infty}\frac{1}{N^2}\log\mu_N^{(h,\Xi(N))}(\Gamma_\mathrm{orb}(\tau:\Xi(N);N,m,\delta)) = 0
\leqno{\rm(\spadesuit)}
$$
for all sufficiently large $m \in \mathbb{N}$ and all sufficiently small $\delta > 0$. Then the tracial state $\tau$ must be orbital equilibrium associated with $h$, and moreover $\chi_\mathrm{orb}(\tau) = \eta_{\mathrm{orb},R}(\tau)$ holds. 
\end{proposition}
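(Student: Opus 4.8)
The plan is to reduce the whole statement to the single inequality
\begin{equation*}
\tau(h)+\pi_{\mathrm{orb},R}(h:(\tau_i)_{i=1}^n)\ \le\ \chi_\mathrm{orb}(\tau),
\end{equation*}
and then to extract that inequality from $(\spadesuit)$. The reduction is bookkeeping: the hyperfiniteness hypothesis (for each $\mathcal{C}_R(\mathbf{x}_i)$ in the GNS representation of $\tau$, hence of $\tau_i=\tau\!\upharpoonright_{\mathcal{C}_R(\mathbf{x}_i)}$) forces every $\tau_i$ to have f.d.a., so $\pi_{\mathrm{orb},R}(h:(\tau_i)_{i=1}^n)\in[-\|h\|_R,\|h\|_R]$ is finite by Proposition \ref{P2.2}\,(2), whence $\tau(h)+\pi_{\mathrm{orb},R}(h:(\tau_i)_{i=1}^n)$ is finite. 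Moreover $\eta_{\mathrm{orb},R}(\tau)\le\tau(h)+\pi_{\mathrm{orb},R}(h:(\tau_i)_{i=1}^n)$ is immediate from the defining infimum of $\eta_{\mathrm{orb},R}(\tau)$, and $\chi_\mathrm{orb}(\tau)\le\eta_{\mathrm{orb},R}(\tau)$ is Theorem \ref{T3.5}\,(6) in the form recorded at the end of \S\S4.1. Hence the displayed inequality pinches $\chi_\mathrm{orb}(\tau)$, $\eta_{\mathrm{orb},R}(\tau)$ and $\tau(h)+\pi_{\mathrm{orb},R}(h:(\tau_i)_{i=1}^n)$ to a common finite value, which is exactly the conclusion: $\tau$ is orbital equilibrium associated with $h$, and $\chi_\mathrm{orb}(\tau)=\eta_{\mathrm{orb},R}(\tau)$.

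To prove the displayed inequality, fix $\varepsilon>0$ and choose a non-commutative polynomial $p=p^*$ in $\mathbf{x}$ with $\|h-p\|_R<\varepsilon$. Looking at $p$ and invoking $(\spadesuit)$, one can pick $m\in\mathbb{N}$ large and $\delta>0$ small enough that $(\spadesuit)$ holds at level $(m,\delta)$ and that, for every $N$ and every $(V_i)_{i=1}^n\in\Gamma_\mathrm{orb}(\tau:\Xi(N);N,m,\delta)$, one has $|\mathrm{tr}_N(p((V_i\Xi_i(N)V_i^*)_{i=1}^n))-\tau(p)|<\varepsilon$; since evaluation $*$-homomorphisms and $\tau$ are norm-contractive, this gives $|\mathrm{tr}_N(h((V_i\Xi_i(N)V_i^*)_{i=1}^n))-\tau(h)|<2\|h-p\|_R+\varepsilon<3\varepsilon$ on that set, so there the integrand $\exp(-N^2\mathrm{tr}_N(h((V_i\Xi_i(N)V_i^*)_{i=1}^n)))$ is at most $\exp(-N^2(\tau(h)-3\varepsilon))$. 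Combining this with the trivial identity
\begin{align*}
&Z_N^{(h,\Xi(N))}\,\mu_N^{(h,\Xi(N))}\bigl(\Gamma_\mathrm{orb}(\tau:\Xi(N);N,m,\delta)\bigr)\\
&\qquad=\int_{\Gamma_\mathrm{orb}(\tau:\Xi(N);N,m,\delta)}\exp\bigl(-N^2\mathrm{tr}_N(h((V_i\Xi_i(N)V_i^*)_{i=1}^n))\bigr)\,d\gamma_{\mathrm{U}(N)}^{\otimes n}(V_i)
\end{align*}
yields, for every $N$,
\begin{align*}
&\frac1{N^2}\log Z_N^{(h,\Xi(N))}+\frac1{N^2}\log\mu_N^{(h,\Xi(N))}\bigl(\Gamma_\mathrm{orb}(\tau:\Xi(N);N,m,\delta)\bigr)\\
&\qquad\le-(\tau(h)-3\varepsilon)+\frac1{N^2}\log\gamma_{\mathrm{U}(N)}^{\otimes n}\bigl(\Gamma_\mathrm{orb}(\tau:\Xi(N);N,m,\delta)\bigr).
\end{align*}
Take $\limsup_{N\to\infty}$. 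The $\mu_N$-term on the left tends to $0$ by $(\spadesuit)$, and the remaining left-hand side equals $\pi_{\mathrm{orb},R}(h:(\tau_i)_{i=1}^n)$ by Proposition \ref{P2.4} (the $\Xi_i(N)$ being approximating microstates for the hyperfinite $\tau_i$). On the right, since $\Xi_i(N)\in\Gamma_R(\tau_i;N,m,\delta)$ for all large $N$, the Haar-volume term is $\le\frac1{N^2}\bar{\chi}_{\mathrm{orb},R}(\mathbf{X}_1,\dots,\mathbf{X}_n;N,m,\delta)$ in the notation of \cite[Proposition 2.4]{Ueda:IUMJ1x}, and $\inf_{m,\delta}\limsup_N\frac1{N^2}\bar{\chi}_{\mathrm{orb},R}(\mathbf{X}_1,\dots,\mathbf{X}_n;N,m,\delta)=\chi_{\mathrm{orb},R}(\mathbf{X}_1,\dots,\mathbf{X}_n)=\chi_\mathrm{orb}(\tau)$ by \cite[Corollary 2.7]{Ueda:IUMJ1x}. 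Taking the infimum over the admissible $(m,\delta)$ therefore gives $\pi_{\mathrm{orb},R}(h:(\tau_i)_{i=1}^n)\le-(\tau(h)-3\varepsilon)+\chi_\mathrm{orb}(\tau)$, and letting $\varepsilon\searrow0$ yields $\tau(h)+\pi_{\mathrm{orb},R}(h:(\tau_i)_{i=1}^n)\le\chi_\mathrm{orb}(\tau)$, as desired.

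I expect the only delicate point to be the interplay of the three parameters $m,\delta,\varepsilon$ and the exchange of $\limsup_N$ with $\inf_{m,\delta}$, which is legitimate because $\gamma_{\mathrm{U}(N)}^{\otimes n}(\Gamma_\mathrm{orb}(\tau:\Xi(N);N,m,\delta))$ and $\bar{\chi}_{\mathrm{orb},R}(\mathbf{X}_1,\dots,\mathbf{X}_n;N,m,\delta)$ are monotone as $m\uparrow$, $\delta\downarrow$, so that infima over the cofinal family of admissible $(m,\delta)$ agree with infima over all $(m,\delta)$; the replacement of $h$ by $p$ is uniform over $\mathrm{U}(N)^n$ precisely because $\|\cdot\|_R$ dominates all the trace discrepancies involved. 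Conceptually, the single use of $(\spadesuit)$ is the observation that a Gibbs ensemble $\mu_N^{(h,\Xi(N))}$ charging the microstate region $\Gamma_\mathrm{orb}(\tau:\Xi(N);N,m,\delta)$ with only subexponentially small mass forces the partition function $Z_N^{(h,\Xi(N))}$ to be governed, to exponential order, by the contribution of that region alone, on which the integrand is pinned near $\exp(-N^2\tau(h))$; the Haar volume of the region then manufactures $\chi_\mathrm{orb}(\tau)$ in the limit. I view this last step as the one needing the most care to state cleanly, mainly because of the nested limits.
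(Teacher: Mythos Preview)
Your proof is correct and follows essentially the same route as the paper's: both hinge on the identity $Z_N^{(h,\Xi(N))}\mu_N^{(h,\Xi(N))}(\Gamma)=\int_\Gamma\exp(-N^2\mathrm{tr}_N(h(\cdots)))\,d\gamma_{\mathrm{U}(N)}^{\otimes n}$, the approximation of the integrand by $\exp(-N^2\tau(h))$ on the orbital microstate set, and Proposition~\ref{P2.4} to identify $\limsup_N N^{-2}\log Z_N^{(h,\Xi(N))}$ with $\pi_{\mathrm{orb},R}(h:(\tau_i)_{i=1}^n)$. The only organizational difference is that the paper records the two-sided estimate $\frac1{N^2}\log\gamma_{\mathrm{U}(N)}^{\otimes n}(\Gamma)\underset{\varepsilon}{\approx}\tau(h)+\frac1{N^2}\log Z_N+\frac1{N^2}\log\mu_N(\Gamma)$ and reads off the equality $\chi_\mathrm{orb}(\tau)=\tau(h)+\pi_{\mathrm{orb},R}(h:(\tau_i)_{i=1}^n)$ directly, whereas you extract only the one-sided inequality and then pinch with $\chi_\mathrm{orb}\le\eta_{\mathrm{orb},R}\le\tau(h)+\pi_{\mathrm{orb},R}$; the content is the same.
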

\begin{proof} Write $\Gamma(N,m,\delta) := \Gamma_\mathrm{orb}(\tau:\Xi(N);N,m,\delta)$ for simplicity. Let $\varepsilon>0$ be arbitrarily given. There is a non-commutative polynomial $p=p^*$ in $\mathbf{x}$ so that $\Vert p - h\Vert_R < \varepsilon/3$. Looking at $p$ one can choose $m \in \mathbb{N}$ and $\delta > 0$ so that  $|\mathrm{tr}_N(p((V_i\Xi_i(N)V_i^*)_{i=1}^n)) - \tau(p)| < \varepsilon/3$ whenever $(V_i)_{i=1}^n \in \Gamma(N,m,\delta)$ with arbitrary $N \in \mathbb{N}$. Hence, if $(V_i)_{i=1}^n \in \Gamma(N,m,\delta)$ with arbitrary $N \in \mathbb{N}$, then $|\mathrm{tr}_N(h((V_i\Xi_i(N)V_i^*)_{i=1}^n)) - \tau(h)| < \varepsilon$. Hence we have 
\begin{align*} 
&\frac{1}{N^2} \log\gamma_{\mathrm{U}(N)}^{\otimes n}(\Gamma(N,m,\delta)) \\
&\quad\underset{\varepsilon}{\approx} 
\tau(h) + \frac{1}{N^2}\log\int_{\Gamma(N,m,\delta)}d\gamma_{\mathrm{U}(N)}^{\otimes n}(V_i)\,\exp(-N^2\mathrm{tr}_N(h((V_i\Xi_i(N)V_i^*)_{i=1}^n))) \\
&\quad= 
\tau(h) + \frac{1}{N^2} \log Z_N^{(h,\Xi(N))} + \frac{1}{N^2}\log\mu_N^{(h,\Xi(N))}(\Gamma(N,m,\delta))
\end{align*}
for all $N \in \mathbb{N}$, where $a\underset{\varepsilon}{\approx}b$ means that
$|a-b|\le\varepsilon$. Hence, the assumption ($\spadesuit$) ensures that 
$$ 
\limsup_{N\to\infty}\frac{1}{N^2} \log\gamma_{\mathrm{U}(N)}^{\otimes n}(\Gamma(N,m,\delta)) 
\underset{\varepsilon}{\approx} 
\tau(h) + \limsup_{N\to\infty}\frac{1}{N^2} \log Z_N^{(h,\Xi(N))} 
$$
for all sufficiently large $m \in \mathbb{N}$ and all sufficiently small $\delta>0$. Taking the limit as $m\to\infty$, $\delta\searrow0$ we obtain $\chi_{\mathrm{orb}}(\tau) = \tau(h) + \pi_{\mathrm{orb},R}(h:(\tau_i)_{i=1}^n)$ by Proposition \ref{P2.4}, since $\varepsilon>0$ is arbitrary. The desired assertion immediately follows thanks to Theorem \ref{T3.5}\,(6) (see \S\S4.1). 
\end{proof}

\begin{remark}\label{R4.1} The assumption ($\spadesuit$) is satisfied when the `\emph{empirical orbital tracial state}' $f \in \mathcal{C}_R(\mathbf{x}) \mapsto \mathrm{tr}_N(f((V_i\Xi_i(N)V_i^*)_{i=1}^n))$ converges to $\tau$ in the weak* topology as $N\to\infty$, almost surely when $(V_i)_{i=1}^n \in \mathrm{U}(N)^n$ is distributed under $\mu_N^{(h,\Xi(N))}$. In fact, this implies a much stronger fact that 
$$
\lim_{N\to\infty}\mu_N^{(h,\Xi(N))}(\Gamma_\mathrm{orb}(\tau:\Xi(N);N,m,\delta)) = 1
$$ 
for every $m \in \mathbb{N}$ and $\delta > 0$. 
Hence the above proposition enables us to see that random matrix models studied by Collins, Guoionnet and Segala \cite{CollinsGuionnetSegala:AdvMath09} produce examples of orbital equilibrium tracial states in a suitable manner. However, the procedure of obtaining the desired tracial states is not so straightforward; hence we postpone its explanation to \S7 (especially Example \ref{Ex7.1}).    
\end{remark}

We do not know whether or not the assumption ($\spadesuit$) is sufficient for the equality between $\eta_\mathrm{orb}$ and Biane and Dabrowski's variant $\tilde{\chi}_\mathrm{orb}$. Similarly we do not yet know whether or not $\chi_\mathrm{orb} = \tilde\chi_\mathrm{orb}$ even for two projections, though the large deviation principle (that is apparently stronger than the convergence fact in Remark \ref{R4.1}) for random two projection matrices was already established in \cite{HiaiPetz:ACTA06}. For these questions further investigations seem necessary. Moreover, an interesting question arises in relation to the assumption ($\spadesuit$), see Remarks \ref{R4.6}\,(4).

\subsection{Connections to the free pressure and the $\eta$-entropy} Throughout this subsection, due to some technical difficulty, we assume that each given multi-indeterminate $\mathbf{x}_i$ consists of a single element;
namely $\mathbf{x}=(x_i)_{i=1}^n$ is an $n$ tuple of single indeterminates $x_i$. In this case, note that, for each $1\le i\le n$, $\mathcal{C}_R(x_i)=C[-R,R]$ and
$\tau_i\in TS(\mathcal{C}_R(x_i))$ may be regarded as a probability measure on $[-R,R]$.
Furthermore, remark that $\eta_R(\tau_i) = \chi(\tau_i)$ for every $\tau_i \in TS(\mathcal{C}_R(x_i))$, see \cite[Proposition 4.2]{Hiai:CMP05}.
For each $h\in\mathcal{C}_R(\mathbf{x})^{sa}$ and $N\in\mathbb{N}$ we define the
`\emph{Gibbs micro-ensemble}' $\lambda_{R,N}^h$ on $(M_N(\mathbb{C})_R^{sa})^n$ to be a
probability measure
$$
{1\over Z_{R,N}^h}\,\exp\bigl(-N^2\mathrm{tr}_N(h(\mathbf{A}))\bigr)
\mathbf{1}_{(M_N(\mathbb{C})_R^{sa})^n}(\mathbf{A})\,
d\Lambda_N^{\otimes n}(\mathbf{A}),  
$$
and also define the `\emph{mean tracial state}' $\tau_{R,N}^h\in TS(\mathcal{C}_R(\mathbf{x}))$ by
$$
\tau_{R,N}^h(f):=\int_{(M_N(\mathbb{C})_R^{sa})^n}
\,d\lambda_{R,N}^h(\mathbf{A})\,\mathrm{tr}_N(f(\mathbf{A})),
\quad f\in\mathcal{C}_R(\mathbf{x})
$$

\begin{proposition}\label{P4.3}
In the situation above, for every $h\in\mathcal{C}_R(\mathbf{x})^{sa}$ we have
$$
\pi_R(h)\ge\pi_{\mathrm{orb},R}(h:(\tau_i)_{i=1}^n)+\sum_{i=1}^n\chi(\tau_i),
$$
and the equality holds if $\lim_{N\to\infty}{1\over N^2}\log\lambda_{R,N}^h
\bigl(\prod_{i=1}^n\Gamma_R(\tau_i;N,m,\delta)\bigr)=0$ for all sufficiently large $m\in\mathbb{N}$ and all sufficiently small $\delta>0$.
\end{proposition}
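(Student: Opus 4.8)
The plan is to carry out, at the level of partition functions, the eigenvalue--angle decomposition underlying the microstate identity $\chi=\chi_{\mathrm{orb}}+\sum_i\chi(\cdot)$ for singletons in \cite[Theorem 2.6]{HiaiMiyamotoUeda:IJM09}. Since each $x_i$ is a single self-adjoint, $\mathcal{C}_R(x_i)$ generates a commutative, hence hyperfinite, von Neumann algebra in the GNS representation of any state on it, so Proposition \ref{P2.4} is available and the f.d.a.\ property is automatic. We may assume $\chi(\tau_i)>-\infty$ for every $i$, since otherwise the right-hand side is $-\infty$ while $\pi_{\mathrm{orb},R}(h:(\tau_i)_{i=1}^n)$ is finite by Proposition \ref{P2.2}\,(2); fix diagonal approximating microstates $\Xi_i(N)\in M_N(\mathbb{C})_R^{sa}$ (e.g.\ from quantiles of $\tau_i$) with $\mathrm{tr}_N(p(\Xi_i(N)))\to\tau_i(p)$ for all $p$.

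For the inequality I would begin from $Z_{R,N}^h\ge\int_{\prod_{i=1}^n\Gamma_R(\tau_i;N,m,\delta)}\exp(-N^2\mathrm{tr}_N(h(\mathbf{A})))\,d\Lambda_N^{\otimes n}(\mathbf{A})$ and apply the Weyl integration formula in each coordinate. For a singleton, $\Gamma_R(\tau_i;N,m,\delta)$ is exactly the $\mathrm{U}(N)$-invariant region cut out by moment conditions on the eigenvalues, so the change of variables $A_i=V_iD^{(i)}V_i^*$ (with $D^{(i)}$ diagonal of spectrum $\lambda^{(i)}$) turns this into
\[
a_N^{\,n}\int_{\prod_i\Omega_i}\left(\int_{\mathrm{U}(N)^n}\exp\bigl(-N^2\mathrm{tr}_N(h((V_iD^{(i)}V_i^*)_{i=1}^n))\bigr)\,d\gamma_{\mathrm{U}(N)}^{\otimes n}(V_i)\right)\prod_i|\Delta(\lambda^{(i)})|^2\,d\lambda^{(i)},
\]
where $a_N>0$ depends only on $N$, $\Delta$ is the Vandermonde, and $\Omega_i$ is the eigenvalue region for $\Gamma_R(\tau_i;N,m,\delta)$; note $a_N^{\,n}\int_{\prod_i\Omega_i}\prod_i|\Delta(\lambda^{(i)})|^2\,d\lambda^{(i)}=\prod_i\Lambda_N(\Gamma_R(\tau_i;N,m,\delta))$. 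When $\lambda^{(i)}\in\Omega_i$ the matrix $D^{(i)}$ is a microstate for $\tau_i$, so by the uniformity-over-microstates step in the proof of Proposition \ref{P2.4} — elementary here, as two diagonal matrices with nearby empirical distributions are close in $2$-norm after a permutation — for each $\varepsilon>0$ one obtains finer $(m',\delta')$ with every inner unitary integral over $\Omega_i(N,m',\delta')$ bounded below by $e^{-N^2\varepsilon}e^{\pi_{\mathrm{orb},R}(h:(\tau_i)_{i=1}^n;N,m',\delta')}$. Passing to $\frac1{N^2}\log$, inserting the $\log N$ renormalization that the definitions of $\pi_R$ and $\chi$ carry (which matches the un-renormalized orbital integral exactly as in \cite[Theorem 2.6]{HiaiMiyamotoUeda:IJM09}), taking $\limsup_{N\to\infty}$ along a subsequence realizing the $\limsup$ of $\frac1{N^2}\pi_{\mathrm{orb},R}(\cdots;N,m',\delta')$, and using that for a singleton the limit $\lim_N(\frac1{N^2}\log\Lambda_N(\Gamma_R(\tau_i;N,m',\delta'))+\frac12\log N)$ genuinely exists (and is $\ge\chi(\tau_i)$), one arrives at $\pi_R(h)\ge-\varepsilon+\pi_{\mathrm{orb},R}(h:(\tau_i)_{i=1}^n)+\sum_i\chi(\tau_i)$; let $\varepsilon\to0$.

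For the equality under the stated hypothesis I would run the computation backwards. Write $Z_{R,N}^h=\lambda_{R,N}^h\bigl(\prod_i\Gamma_R(\tau_i;N,m,\delta)\bigr)^{-1}\int_{\prod_i\Gamma_R(\tau_i;N,m,\delta)}e^{-N^2\mathrm{tr}_N(h(\mathbf{A}))}\,d\Lambda_N^{\otimes n}$; after the same Weyl change of variables, bound each inner unitary integral from \emph{above} by $e^{\pi_{\mathrm{orb},R}(h:(\tau_i)_{i=1}^n;N,m,\delta)}$, directly from the supremum in Definition \ref{D2.1}. This gives $Z_{R,N}^h\le\lambda_{R,N}^h(\cdots)^{-1}e^{\pi_{\mathrm{orb},R}(h:(\tau_i)_{i=1}^n;N,m,\delta)}\prod_i\Lambda_N(\Gamma_R(\tau_i;N,m,\delta))$; taking $\frac1{N^2}\log$, adding the renormalization, and using subadditivity of $\limsup$ together with the hypothesis $\frac1{N^2}\log\lambda_{R,N}^h(\cdots)\to0$ yields $\pi_R(h)\le\pi_{\mathrm{orb},R}(h:(\tau_i)_{i=1}^n;m,\delta)+\sum_i\chi_R(\tau_i;m,\delta)$ for all large $m$ and small $\delta$; letting $m\to\infty$, $\delta\to0$ and recalling $\chi_R(\tau_i)=\chi(\tau_i)$ gives the reverse inequality.

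The main technical burden is bookkeeping rather than an isolated hard estimate: one has to reconcile the Weyl-formula constant $a_N$ and the $\frac n2\log N$ renormalizations in $\pi_R$ and in $\sum_i\chi(\tau_i)$ with the un-renormalized orbital unitary integral, and to interchange $\limsup_{N\to\infty}$ with the finite sum over $i$ — which is exactly what forces the appeal (in the $\ge$ direction) to the genuine existence of the single-variable microstate-volume limits. Both points are routine, as in \cite{HiaiMiyamotoUeda:IJM09}; the usually delicate uniformity-over-microstates input is trivial here because each $\mathbf{x}_i$ is a singleton.
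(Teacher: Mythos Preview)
Your argument is correct and follows the same overall strategy as the paper: restrict $Z_{R,N}^h$ to $\prod_i\Gamma_R(\tau_i;N,m,\delta)$, insert a unitary integral, bound the inner integral by an orbital pressure quantity, and identify the remaining outer integral as $\prod_i\Lambda_N(\Gamma_R(\tau_i;N,m,\delta))$. The paper streamlines the execution in two places. First, it bypasses the Weyl integration formula entirely: since both $\Gamma_R(\tau_i;N,m,\delta)$ and $\Lambda_N$ are $\mathrm{U}(N)$-conjugation invariant, one may directly write
\[
\int_{\prod_i\Gamma_R}e^{-N^2\mathrm{tr}_N(h(\mathbf{A}))}\,d\Lambda_N^{\otimes n}(\mathbf{A})
=\int_{\prod_i\Gamma_R}d\Lambda_N^{\otimes n}(A_i)\int_{\mathrm{U}(N)^n}d\gamma_{\mathrm{U}(N)}^{\otimes n}(V_i)\,e^{-N^2\mathrm{tr}_N(h((V_iA_iV_i^*)_i))}
\]
with no eigenvalue decomposition and no bookkeeping of the constant $a_N$. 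Second, for the lower bound on the inner integral the paper invokes the \emph{infimum} variant $\underline{\pi}_{\mathrm{orb},R}(h:(\tau_i);N,m,\delta)$ from Remark~\ref{R2.5}; the uniformity-over-microstates input you extract from Proposition~\ref{P2.4} is then used once, at the end, via the identity $\pi_{\mathrm{orb},R}(h:(\tau_i))=\sup_{m,\delta}\limsup_N\frac{1}{N^2}\,\underline{\pi}_{\mathrm{orb},R}(h:(\tau_i);N,m,\delta)$, rather than carried through the estimate as an explicit $\varepsilon$. This also makes your subsequence manoeuvre unnecessary, since $\limsup_N(b_N+c_N)=\limsup_N b_N+\lim_N c_N$ whenever $c_N$ converges. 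Your treatment of the equality part, and your observation that the genuine existence of the single-variable limit $\chi_R(\tau_i;m,\delta)$ is what allows the $\limsup$ to split, match the paper exactly.
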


\begin{proof}
We have
\begin{align*}
Z_{R,N}^h &\geq Z_{R,N}^h\lambda_{R,N}^h\Biggl(\prod_{i=1}^n\Gamma_R(\tau_i;N,m,\delta)\Biggr) \\
&=\int_{\prod_{i=1}^n\Gamma_R(\tau_i;N,m,\delta)}d\Lambda_N^{\otimes n}(A_i)\,
\exp\bigl(-N^2\mathrm{tr}_N(h((A_i)_{i=1}^n))\bigr) \\
&=\int_{\prod_{i=1}^n\Gamma_R(\tau_i;N,m,\delta)}d\Lambda_N^{\otimes n}(A_i)
\int_{\mathrm{U}(N)^n}d\gamma_{\mathrm{U}(N)}^{\otimes n}(V_i)\,
\exp\bigl(-N^2\mathrm{tr}_N(h((V_iA_iV_i^*)_{i=1}^n))\bigr) \\
&\ge\exp\bigl(\underline\pi_{\mathrm{orb},R}(h:(\tau_i)_{i=1}^n;N,m,\delta)\bigr)
\prod_{i=1}^n\Lambda_N\bigl(\Gamma_R(\tau_i;N,m,\delta)\bigr),
\end{align*}
where $\underline\pi_{\mathrm{orb},R}(h:(\tau_i)_{i=1}^n;N,m,\delta)$ is the quantity introduced in Remark \ref{R2.5}. The latter equality above is due to the unitary conjugation
invariance of $\Gamma_R(\tau_i;N,m,\delta)$ and $\Lambda_N$. Therefore,
\begin{align*}
&\limsup_{N\to\infty}\biggl({1\over N^2}\log Z_{R,N}^h+{n\over2}\log N\biggr) \\
&\quad\ge\limsup_{N\to\infty}{1\over N^2}\,
\underline\pi_{\mathrm{orb},R}(h:(\tau_i)_{i=1}^n;N,m,\delta)
+\sum_{i=1}^n\chi_R(\tau_i;m,\delta),
\end{align*}
since the limit $\chi_R(\tau_i;m,\delta):=\lim_{N\to\infty}\left({1\over N^2}
\log\Lambda_N\bigl(\Gamma_R(\tau_i;N,m,\delta)\bigr)+{1\over2}\log N\right)$
exists, see e.g., \cite[Theorem 5.6.2]{HiaiPetz:Book}. Letting $m\to\infty$ and $\delta\searrow0$ we get the
desired inequality thanks to Remark \ref{R2.5}. On the other hand, we also have
\begin{align*}
Z_{R,N}^h\lambda_{R,N}^h\Biggl(\prod_{i=1}^n\Gamma_R(\tau_i;N,m,\delta)\Biggr) 
\le\exp\bigl(\pi_{\mathrm{orb},R}(h:(\tau_i)_{i=1}^n;N,m,\delta)\bigr)
\prod_{i=1}^n\Lambda_N\bigl(\Gamma_R(\tau_i;N,m,\delta)\bigr)
\end{align*}
so that
\begin{align*}
&\limsup_{N\to\infty}\biggl({1\over N^2}\log Z_{R,N}^h+{n\over2}\log N\biggr)
+\liminf_{N\to\infty}{1\over N^2}
\log\lambda_{R,N}^h\Biggl(\prod_{i=1}^n\Gamma_R(\tau_i;N,m,\delta)\Biggr) \\
&\quad\le\limsup_{N\to\infty}{1\over N^2}\,
\pi_{\mathrm{orb},R}(h:(\tau_i)_{i=1}^n;N,m,\delta)
+\sum_{i=1}^n\chi_R(\tau_i;m,\delta).
\end{align*}
With the stated assumption this yields
$\pi_R(h)\le\pi_{\mathrm{orb},R}(h:(\tau_i)_{i=1}^n)+\sum_{i=1}^n\chi(\tau_i)$.
\end{proof}

\begin{corollary}\label{C4.4}
We have
$$
\eta_R(\tau)\ge\eta_{\mathrm{orb},R}(\tau)+\sum_{i=1}^n\chi(\tau_i)
\ge\chi_\mathrm{orb}(\tau)+\sum_{i=1}^n\chi(\tau_i)=\chi(\tau).
$$
Hence, if $\eta_R(\tau)=\chi(\tau)$ holds and $\chi(\tau_i)>-\infty$ for all $1\le i\le n$, then $\eta_{\mathrm{orb},R}(\tau)=\chi_\mathrm{orb}(\tau)$ holds. 
\end{corollary}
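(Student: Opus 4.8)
The statement is a formal consequence of Proposition~\ref{P4.3}, Theorem~\ref{T3.5}\,(6) and the singleton case of \cite[Theorem 2.6]{HiaiMiyamotoUeda:IJM09}, so the plan is to verify the displayed chain link by link and then cancel a finite constant.

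For the leftmost inequality, recall that $\eta_R$ is (minus) the Legendre transform of the free pressure $\pi_R$, that is, $\eta_R(\tau) = \inf\{\tau(h) + \pi_R(h) : h \in \mathcal{C}_R(\mathbf{x})^{sa}\}$. Applying Proposition~\ref{P4.3} to the restriction family $\tau_i := \tau\!\upharpoonright_{\mathcal{C}_R(x_i)}$, one gets $\pi_R(h) \ge \pi_{\mathrm{orb},R}(h:(\tau_i)_{i=1}^n) + \sum_{i=1}^n \chi(\tau_i)$ for every $h \in \mathcal{C}_R(\mathbf{x})^{sa}$. Adding $\tau(h)$ to both sides and taking the infimum over $h$ — here $\sum_{i=1}^n \chi(\tau_i) \in [-\infty,\infty)$ is independent of $h$ and $\inf_h\{\tau(h) + \pi_{\mathrm{orb},R}(h:(\tau_i)_{i=1}^n)\} = \eta_{\mathrm{orb},R}(\tau) \le 0$ (hence $<\infty$), so the infimum of the sum splits — yields $\eta_R(\tau) \ge \eta_{\mathrm{orb},R}(\tau) + \sum_{i=1}^n \chi(\tau_i)$.

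The middle inequality $\eta_{\mathrm{orb},R}(\tau) \ge \chi_\mathrm{orb}(\tau)$ is Theorem~\ref{T3.5}\,(6) read through the GNS picture of \S\S4.1 (i.e.\ $\eta_{\mathrm{orb},R}(\tau) = \eta_\mathrm{orb}(\mathbf{X}_1,\dots,\mathbf{X}_n)$ and $\chi_\mathrm{orb}(\tau) = \chi_\mathrm{orb}(\mathbf{X}_1,\dots,\mathbf{X}_n)$ for the images $X_i$ of $x_i$). The rightmost equality $\chi_\mathrm{orb}(\tau) + \sum_{i=1}^n \chi(\tau_i) = \chi(\tau)$ is precisely the identity $\chi(\mathbf{X}_1,\dots,\mathbf{X}_n) = \chi_\mathrm{orb}(\mathbf{X}_1,\dots,\mathbf{X}_n) + \sum_{i=1}^n \chi(\mathbf{X}_i)$ of \cite[Theorem 2.6]{HiaiMiyamotoUeda:IJM09}, applicable here because every $\mathbf{X}_i$ is a singleton (so $W^*(X_i)$ is abelian, hence hyperfinite) and $\chi(\mathbf{X}_i) = \chi(X_i) = \chi(\tau_i)$.

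For the last statement, assume $\eta_R(\tau) = \chi(\tau)$ and $\chi(\tau_i) > -\infty$ for all $1 \le i \le n$. Since each $\tau_i$ is a compactly supported measure, $\chi(\tau_i)$ is also finite from above, so $c := \sum_{i=1}^n \chi(\tau_i) \in \mathbb{R}$. The chain then reads $\chi(\tau) = \eta_R(\tau) \ge \eta_{\mathrm{orb},R}(\tau) + c \ge \chi_\mathrm{orb}(\tau) + c = \chi(\tau)$, forcing equality throughout; subtracting the finite constant $c$ from $\eta_{\mathrm{orb},R}(\tau) + c = \chi_\mathrm{orb}(\tau) + c$ gives $\eta_{\mathrm{orb},R}(\tau) = \chi_\mathrm{orb}(\tau)$. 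There is no genuine obstacle beyond this bookkeeping with $\pm\infty$: one need only check that the infimum splits (using $\eta_{\mathrm{orb},R}(\tau) \le 0$ and that $\pi_R(h)$, $\pi_{\mathrm{orb},R}(h:(\tau_i)_{i=1}^n)$, $\tau(h)$ never produce an ill-defined $\infty - \infty$) and that $c$ is finite before cancelling it.
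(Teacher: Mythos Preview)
Your proof is correct and follows essentially the same route as the paper's: both use Proposition~\ref{P4.3} to bound $\tau(h)+\pi_R(h)$ from below by $\eta_{\mathrm{orb},R}(\tau)+\sum_i\chi(\tau_i)$ before taking the infimum over $h$, then invoke Theorem~\ref{T3.5}\,(6) and \cite[Theorem 2.6]{HiaiMiyamotoUeda:IJM09} for the remaining links. Your extra care with the $\pm\infty$ bookkeeping (noting $\eta_{\mathrm{orb},R}(\tau)\le0$ so the constant $\sum_i\chi(\tau_i)$ can be pulled out of the infimum, and checking $c$ is finite before cancelling) is a welcome clarification of what the paper leaves implicit.
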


\begin{proof}
By Proposition \ref{P4.3}, for every
$h\in\mathcal{C}_R(\mathbf{x})^{sa}$ we have
$$
\tau(h)+\pi_R(h)
\ge\tau(h)+\pi_{\mathrm{orb},R}(h:(\tau_i)_{i=1}^n)+\sum_{i=1}^n\chi(\tau_i)
\ge\eta_{\mathrm{orb},R}(\tau)+\sum_{i=1}^n\chi(\tau_i),
$$
which implies the first inequality. The second inequality is contained in Theorem
\ref{T3.5}\,(6) and the last equality is \cite[Theorem 2.6]{HiaiMiyamotoUeda:IJM09}.
The latter assertion is immediate from the first.
\end{proof}

\begin{corollary}\label{C4.5}
If $\tau$ is an equilibrium tracial state associated with
$h\in\mathcal{C}_R(\mathbf{x})^{sa}$ and $\eta_R(\tau)=\chi(\tau)$, then $\tau$ is an
orbital equilibrium tracial state associated with $h$ and
$$
\pi_R(h)=\pi_{\mathrm{orb},R}(h:(\tau_i)_{i=1}^n)+\sum_{i=1}^n\chi(\tau_i), \qquad \eta_{\mathrm{orb},R}(\tau) = \chi_\mathrm{orb}(\tau).
$$
\end{corollary}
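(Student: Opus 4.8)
The plan is to combine the equilibrium hypothesis $\eta_R(\tau)=\tau(h)+\pi_R(h)$ with Corollary \ref{C4.4} and Proposition \ref{P4.3}, keeping careful track of finiteness by means of the additivity formula $\chi(\tau)=\chi_\mathrm{orb}(\tau)+\sum_{i=1}^n\chi(\tau_i)$ (\cite[Theorem 2.6]{HiaiMiyamotoUeda:IJM09}, which applies here since each $\mathbf{x}_i$ is a singleton in this subsection).

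First I would note that the equilibrium identity exhibits $\eta_R(\tau)$ as the finite real number $\tau(h)+\pi_R(h)$ (here $\pi_R$ is real-valued on $\mathcal{C}_R(\mathbf{x})^{sa}$, cf. \cite{Hiai:CMP05}, and $\tau(h)\in\mathbb{R}$). Together with the hypothesis $\eta_R(\tau)=\chi(\tau)$ this forces $\chi(\tau)\in\mathbb{R}$. Plugging this into the additivity formula and using $\chi_\mathrm{orb}(\tau)\le0$ together with $\chi(\tau_i)<\infty$ for all $i$, finiteness of $\chi(\tau)$ propagates to $\chi(\tau_i)\in\mathbb{R}$ for every $i$ and $\chi_\mathrm{orb}(\tau)\in\mathbb{R}$; in particular $\sum_{i=1}^n\chi(\tau_i)$ is a finite real number.

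Next I would read the chain of inequalities supplied by Corollary \ref{C4.4},
\[
\eta_R(\tau)\ge\eta_{\mathrm{orb},R}(\tau)+\sum_{i=1}^n\chi(\tau_i)\ge\chi_\mathrm{orb}(\tau)+\sum_{i=1}^n\chi(\tau_i)=\chi(\tau),
\]
and observe that, since the two ends coincide by hypothesis, both inequalities are equalities. Because $\sum_{i=1}^n\chi(\tau_i)$ is finite it may be cancelled from the first equality and from the identification of the middle term with $\chi(\tau)$, giving at once the last asserted identity $\eta_{\mathrm{orb},R}(\tau)=\chi_\mathrm{orb}(\tau)$ as well as $\eta_R(\tau)=\eta_{\mathrm{orb},R}(\tau)+\sum_{i=1}^n\chi(\tau_i)$.

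Finally, combining the equilibrium identity with the last display yields $\tau(h)+\pi_R(h)=\eta_{\mathrm{orb},R}(\tau)+\sum_{i=1}^n\chi(\tau_i)$; since $\eta_{\mathrm{orb},R}(\tau)\le\tau(h)+\pi_{\mathrm{orb},R}(h:(\tau_i)_{i=1}^n)$ by the definition of the (minus) Legendre transform in Definition \ref{D3.1}, this forces $\pi_R(h)\le\pi_{\mathrm{orb},R}(h:(\tau_i)_{i=1}^n)+\sum_{i=1}^n\chi(\tau_i)$, and the reverse inequality is precisely Proposition \ref{P4.3}; hence $\pi_R(h)=\pi_{\mathrm{orb},R}(h:(\tau_i)_{i=1}^n)+\sum_{i=1}^n\chi(\tau_i)$. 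Substituting this back into the equilibrium identity gives $\tau(h)+\pi_{\mathrm{orb},R}(h:(\tau_i)_{i=1}^n)=\eta_{\mathrm{orb},R}(\tau)$ with finite value, so $\tau$ is orbital equilibrium associated with $h$. The only genuinely delicate point is the finiteness bookkeeping of the second paragraph --- in particular the deduction that each $\chi(\tau_i)>-\infty$ --- since this is exactly what licenses cancelling $\sum_{i=1}^n\chi(\tau_i)$ throughout; the rest is a direct chase through the already-established (in)equalities.
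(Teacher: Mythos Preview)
Your proof is correct and follows essentially the same approach as the paper's: both sandwich a chain of inequalities between $\eta_R(\tau)$ and $\chi(\tau)$ using Proposition \ref{P4.3} and Corollary \ref{C4.4}, then read off all three conclusions from the forced equalities after noting $\sum_i\chi(\tau_i)>-\infty$. The only cosmetic difference is that the paper compresses everything into a single chain
\[
\eta_R(\tau)=\tau(h)+\pi_R(h)\ge\tau(h)+\pi_{\mathrm{orb},R}(h:(\tau_i)_{i=1}^n)+\sum_i\chi(\tau_i)\ge\eta_{\mathrm{orb},R}(\tau)+\sum_i\chi(\tau_i)\ge\chi(\tau)=\eta_R(\tau),
\]
which yields the pressure identity and orbital equilibrium directly from the first two equalities, whereas you first invoke Corollary \ref{C4.4} to get $\eta_{\mathrm{orb},R}(\tau)=\chi_\mathrm{orb}(\tau)$ and then circle back to the pressure identity.
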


\begin{proof}
Since $\eta_R(\tau)$ is finite by the equilibrium assumption, we have
$\sum_{i=1}^n\chi(\tau_i) \geq \chi(\tau)=\eta_R(\tau)>-\infty$.  Therefore, we have
\begin{align*}
\eta_R(\tau)=\tau(h)+\pi_R(h)&\geq\tau(h)+\pi_{\mathrm{orb},R}(h:(\tau_i)_{i=1}^n)+\sum_{i=1}^n\chi(\tau_i) \\
&\ge\eta_{\mathrm{orb},R}(\tau)+\sum_{i=1}^n\chi(\tau_i)\geq\chi(\tau)=\eta_R(\tau)
\end{align*}
by Proposition \ref{P4.3} and Corollary \ref{C4.4}. Hence the desired assertions immediately follow.
\end{proof}

\begin{proposition}\label{P4.7} Let $h \in \mathcal{C}_R(\mathbf{x})^{sa}$ and $\tau \in TS(\mathcal{C}_R(\mathbf{x}))$ be given. Assume that 
$$
\lim_{N\to\infty}\frac{1}{N^2}\log\lambda_{R,N}^h(\Gamma_R(\tau;N,m,\delta)) = 0
\leqno{\rm(\clubsuit)}
$$
for all sufficiently large $m \in \mathbb{N}$ and all sufficiently small $\delta > 0$. Then the tracial state $\tau$ must be equilibrium associated with $h$, and $\chi(\tau) = \eta_R(\tau)$ holds. Moreover, the tracial state $\tau$ is also orbital equilibrium associated with $h$, and $
\pi_R(h) = \pi_{\mathrm{orb},R}(h) + \sum_{i=1}^n \chi(\tau_i)$ with $\tau_i := \tau\!\upharpoonright_{\mathcal{C}_R(\mathbf{x}_i)}$, $1 \leq i \leq n$, and $\chi_\mathrm{orb}(\tau) = \eta_{\mathrm{orb},R}(\tau)$ hold.  
\end{proposition}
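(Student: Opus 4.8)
The plan is to establish first the non-orbital conclusions — that $\tau$ is equilibrium associated with $h$ and that $\chi(\tau)=\eta_R(\tau)$ — by a direct Laplace-type comparison of $Z_{R,N}^h$ with the weight carried on the microstate neighbourhood of $\tau$, and then to read off the ``moreover'' part by a pure appeal to Corollary \ref{C4.5}, whose two hypotheses will have been verified along the way.

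For the comparison I would start from the identity
$$
Z_{R,N}^h\,\lambda_{R,N}^h\bigl(\Gamma_R(\tau;N,m,\delta)\bigr)
=\int_{\Gamma_R(\tau;N,m,\delta)}\exp\bigl(-N^2\mathrm{tr}_N(h(\mathbf{A}))\bigr)\,d\Lambda_N^{\otimes n}(\mathbf{A}).
$$
Given $\varepsilon>0$, pick a non-commutative polynomial $p=p^*$ in $\mathbf{x}$ with $\|p-h\|_R<\varepsilon/3$ and then, looking at $p$, choose $m\in\mathbb{N}$ large and $\delta>0$ small enough that ($\clubsuit$) holds and that $|\mathrm{tr}_N(h(\mathbf{A}))-\tau(h)|<\varepsilon$ for every $\mathbf{A}\in\Gamma_R(\tau;N,m,\delta)$ and every $N$. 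On $\Gamma_R(\tau;N,m,\delta)$ the integrand is then squeezed between $\exp(-N^2(\tau(h)\pm\varepsilon))$, whence, for all large $N$ (for which $\Lambda_N^{\otimes n}(\Gamma_R(\tau;N,m,\delta))>0$ by ($\clubsuit$) together with the mutual absolute continuity of $\lambda_{R,N}^h$ and $\Lambda_N^{\otimes n}$ on $(M_N(\mathbb{C})_R^{sa})^n$),
$$
\Bigl|\,\frac{1}{N^2}\log Z_{R,N}^h
+\frac{1}{N^2}\log\lambda_{R,N}^h(\Gamma_R(\tau;N,m,\delta))
+\tau(h)
-\frac{1}{N^2}\log\Lambda_N^{\otimes n}(\Gamma_R(\tau;N,m,\delta))\Bigr|\le\varepsilon .
$$
Adding $\frac{n}{2}\log N$, using $\frac{1}{N^2}\log\lambda_{R,N}^h(\Gamma_R(\tau;N,m,\delta))\to0$ from ($\clubsuit$) (so this term does not affect the $\limsup$), and taking $\limsup_{N\to\infty}$, the left-hand asymptotics become $\pi_R(h)$ and the right-hand ones become $-\tau(h)+\chi_R(\tau;m,\delta)$, where $\chi_R(\tau;m,\delta):=\limsup_{N\to\infty}\bigl(\frac{1}{N^2}\log\Lambda_N^{\otimes n}(\Gamma_R(\tau;N,m,\delta))+\frac{n}{2}\log N\bigr)$. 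Hence $|\pi_R(h)+\tau(h)-\chi_R(\tau;m,\delta)|\le\varepsilon$; letting $m\to\infty$, $\delta\searrow0$ — so that $\chi_R(\tau;m,\delta)\searrow\chi_R(\tau)=\chi(\tau)$ by monotonicity and by $\chi_R=\chi$ — and then $\varepsilon\to0$ yields $\pi_R(h)=\chi(\tau)-\tau(h)$.

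Consequently $\eta_R(\tau)\le\tau(h)+\pi_R(h)=\chi(\tau)$ by the defining infimum of $\eta_R$, while $\chi(\tau)\le\eta_R(\tau)$ is part of the inequality chain in Corollary \ref{C4.4} (available here since every $\mathbf{x}_i$ is a singleton). Therefore $\chi(\tau)=\eta_R(\tau)$, this common value equals $\tau(h)+\pi_R(h)$ and is finite (both $\tau(h)$ and $\pi_R(h)$ being finite), so $\tau$ is an equilibrium tracial state associated with $h$. Now invoke Corollary \ref{C4.5} with its two hypotheses ``$\tau$ equilibrium associated with $h$'' and ``$\eta_R(\tau)=\chi(\tau)$'' in hand: it delivers at once that $\tau$ is orbital equilibrium associated with $h$, that $\pi_R(h)=\pi_{\mathrm{orb},R}(h:(\tau_i)_{i=1}^n)+\sum_{i=1}^n\chi(\tau_i)$, and that $\chi_\mathrm{orb}(\tau)=\eta_{\mathrm{orb},R}(\tau)$, which is exactly the ``moreover'' clause.

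The one delicate point is the $\limsup$ bookkeeping in the Laplace comparison: one must note that adjoining the vanishing sequence $\frac{1}{N^2}\log\lambda_{R,N}^h(\Gamma_R(\tau;N,m,\delta))$ leaves the $\limsup$ unchanged, so that the left-hand side genuinely reproduces $\pi_R(h)$, and one should stay with $\limsup$ throughout (rather than asserting a genuine limit for the joint microstate volumes), exploiting the monotonicity $\chi_R(\tau;m,\delta)\searrow\chi(\tau)$ to pass to the limit in $m$ and $\delta$. Once the non-orbital half is secured, the remaining assertions cost nothing beyond the citation of Corollary \ref{C4.5}.
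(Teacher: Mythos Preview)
Your proof is correct and follows essentially the same route as the paper's: both establish the identity $\chi(\tau)=\tau(h)+\pi_R(h)$ via the Laplace-type comparison of $\frac{1}{N^2}\log\Lambda_N^{\otimes n}(\Gamma_R(\tau;N,m,\delta))$ with $\tau(h)+\frac{1}{N^2}\log Z_{R,N}^h+\frac{1}{N^2}\log\lambda_{R,N}^h(\Gamma_R(\tau;N,m,\delta))$, invoke $\chi\le\eta_R$ to conclude equilibrium and $\chi(\tau)=\eta_R(\tau)$, and then cite Corollary~\ref{C4.5} for the orbital assertions. Your extra remarks on the polynomial approximation and the $\limsup$ bookkeeping are sound elaborations of steps the paper leaves implicit.
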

\begin{proof} The proof is similar to that of Proposition \ref{P4.2}. Let $\varepsilon>0$ be arbitrarily chosen, and one can choose $m \in \mathbb{N}$ and $\delta>0$ so that $|\mathrm{tr}_N(h(\mathbf{A})) - \tau(h)| < \varepsilon$ whenever $\mathbf{A} \in \Gamma_R(\tau;N,m,\delta)$ with arbitrary $N \in \mathbb{N}$. Then we have 
\begin{align*} 
\frac{1}{N^2}\log\Lambda_N^{\otimes n}(\Gamma_R(\tau;N,m,\delta)) \underset{\varepsilon}{\approx} \tau(h) + \frac{1}{N^2}\log Z_{R,N}^h + \frac{1}{N^2}\log\lambda_{R,N}^h(\Gamma_R(\tau;N,m,\delta))
\end{align*}
as in the proof of Proposition \ref{P4.2}. Hence the assumption ($\clubsuit$) ensures that 
\begin{align*}
&\limsup_{N\to\infty}\biggl(\frac{1}{N^2}\log\Lambda_N^{\otimes n}(\Gamma_R(\tau;N,m,\delta))+\frac{n}{2}\log N\biggr) \\
&\qquad\qquad\underset{\varepsilon}{\approx} \tau(h) + 
\limsup_{N\to\infty}\biggl(\frac{1}{N^2}\log Z_{R,N}^h+\frac{n}{2}\log N\biggr)
\end{align*}
holds for all sufficiently large $m \in \mathbb{N}$ and all sufficiently small $\delta > 0$. Taking the limit as $m\to\infty$ and $\delta\searrow0$ we obtain $\chi(\tau) = \tau(h) + \pi_R(h)$, since $\varepsilon>0$ is arbitrary. The first assertion is immediate thanks to the general fact $\chi(\tau) \leq \eta_R(\tau)$, see \cite[Theorem 4.5 (1)]{Hiai:CMP05}. The second assertion is immediate due to Corollary \ref{C4.5}. 
\end{proof} 

\begin{remarks}\label{R4.6} (1) The above assumption ($\clubsuit$) is satisfied when the `\emph{empirical tracial state}' $f \in \mathcal{C}_R(\mathbf{x}) \mapsto \mathrm{tr}_N(f(\mathbf{A}))$ converges to $\tau$ in the weak* topology as $N\to\infty$, almost surely when $\mathbf{A}\in(M_N(\mathbb{C})_R^{sa})^n$
is distributed under $\lambda_{R,N}^h$. In fact, this implies a much stronger fact that 
$$
\lim_{N\to\infty}\lambda_{R,N}^h(\Gamma_R(\tau;N,m,\delta)) = 1
$$ 
for every $m \in \mathbb{N}$ and $\delta > 0$. 

(2) Assume that $\tau$ is the limit of the mean tracial states $\tau_{R,N}^h$ in the weak$^*$ topology as $N\to\infty$. If the $\tau$ were confirmed to be extremal (this is the case if $\chi(\tau)>-\infty$ due to the the degenerate convexity of $\chi$ \cite[$\chi.8$]{Voiculescu:Survey}), then Biane and Dabrowski's concentration lemma \cite[Lemma 6.1]{BianeDabrowski:AdvMath13} would imply the desired assumption ($\clubsuit$) as follows. The consequence of their lemma implies that for each $m\in \mathbb{N}$ and $\delta > 0$ one has $\lambda_{R,N}^h(\Gamma_R(\tau;N,m,\delta)) > 1/2$ for all sufficiently large $N \in \mathbb{N}$. This immediately implies the assumption ($\clubsuit$), so we have $\chi_\mathrm{orb}(\tau) = \tilde{\chi}_\mathrm{orb}(\tau)=\eta_{\mathrm{orb},R}(\tau)$ by \cite[Theorem 7.3\,(6)]{BianeDabrowski:AdvMath13} and the above proposition.

(3) It would be quite nice if one could prove the same consequence as in (2) above for any extremal weak$^*$-limit point of the mean tracial states $\tau_{R,N}^h$. This is suggested by the fact in quantum spin systems that a weak$^*$-limit point of local Gibbs states is a global Gibbs (equilibrium) states \cite[\S6.2.2]{BratteliRobinson:Book2}. The above proof of Proposition \ref{P4.7} does not work under the assumption that $\tau$ is a weak$^*$-limit point of $\tau_{R,N}^h$.     

(4) When a given tracial state $\tau$ is weak$^*$-exposed in the convex set of all $\sigma \in TS(\mathcal{C}_R(\mathbf{x}))$ conditioned by $\sigma\!\upharpoonright_{\mathcal{C}_R(\mathbf{x}_i)} = \tau_i$, $1 \leq i \leq n$, the orbital microstate counterpart of the consequence of Biane and Dabrowski's concentration lemma (which implies the assumption ($\spadesuit$)) holds with the essentially same proof as \cite[Corollary 5.4, Lemma 6.1]{BianeDabrowski:AdvMath13}. Hence it is desirable to find a suitable condition for a given tracial state to be weak$^*$-exposed in the above convex set rather than its extremality.      
\end{remarks}

\begin{example}\label{Ex4.8}\rm
According to \cite[Theorem 3.5]{GuionnetSegala:ALEA06}, if
$h=\sum_{i=1}^nx_i^2/2+\sum_{k=1}^l t_k q_k$ is a self-adjoint polynomial in
$\mathbf{x}$ with monomials $q_k$ and sufficiently small coefficients $t_k$, then there
exists a unique $\tau\in TS(\mathcal{C}_R(\mathbf{x}))$, given as a unique solution to the
Schwinger-Dyson equation, such that the `empirical tracial state'
$f\in\mathcal{C}_R(\mathbf{x})^{sa}\mapsto\mathrm{tr}_N(f(\mathbf{A}))$
converges to $\tau$ in the weak* topology as $N\to\infty$, almost surely when $\mathbf{A}\in(M_N(\mathbb{C})_R^{sa})^n$
is distributed under $\lambda_{R,N}^h$. This implies the stronger assumption in Remarks \ref{R4.6}\,(1) and that $\tau$ is the limit of $\tau_{R,N}^h$ in the weak$^*$ topology. Therefore, the tracial state $\tau$ enjoys the assertions of Proposition \ref{P4.7} and $\chi_\mathrm{orb}(\tau) =\tilde{\chi}_\mathrm{orb}(\tau)$ (thanks to $\chi(\tau) > -\infty$). Here it should be mentioned that the proof of Proposition \ref{P4.7} is essentially same as  that of \cite[Theorem 4.1]{GuionnetSegala:ALEA06}, where the formula $\chi(\tau) = \tau(h) + \pi_R(h)$ explicitly appears.
\end{example}

\section{Free independence and Orbital $\eta$-entropy}

In the notations as in \S3 we will establish the next theorem. 

\begin{theorem}\label{T5.1} Self-adjoint random multi-variables $\mathbf{X}_1,\dots,\mathbf{X}_n$ in a tracial $W^*$-probability space are freely independent and each of them has f.d.a.~if and only if $\eta_\mathrm{orb}(\mathbf{X}_1,\dots,\mathbf{X}_n) = 0$. 
\end{theorem}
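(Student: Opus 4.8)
One implication, namely that free independence of $\mathbf{X}_1,\dots,\mathbf{X}_n$ together with f.d.a.\ of each $\mathbf{X}_i$ yields $\eta_\mathrm{orb}(\mathbf{X}_1,\dots,\mathbf{X}_n)=0$, is already Theorem \ref{T3.5}\,(8), so the real task is the converse. Assume $\eta_\mathrm{orb}(\mathbf{X}_1,\dots,\mathbf{X}_n)=0$. Since this value is $>-\infty$, Theorem \ref{T3.5}\,(2) shows at once that each $\mathbf{X}_i$ has f.d.a., so only free independence remains. Fix $R\ge\max_{i,j}\|X_{ij}\|_\infty$, put $\tau:=\tau_\mathbf{X}^{(R)}\in TS(\mathcal{C}_R(\mathbf{x}))$ and $\tau_i:=\tau\!\upharpoonright_{\mathcal{C}_R(\mathbf{x}_i)}$, and, using that each $\tau_i$ has f.d.a., let $\tau^\circ\in TS(\mathcal{C}_R(\mathbf{x}))$ be the free product of the $\tau_i$; then $\tau^\circ\!\upharpoonright_{\mathcal{C}_R(\mathbf{x}_i)}=\tau_i$ and, by Theorem \ref{T3.5}\,(8), $\eta_{\mathrm{orb},R}(\tau^\circ)=0$. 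Since free independence of $\mathbf{X}_1,\dots,\mathbf{X}_n$ is equivalent to $\tau=\tau^\circ$, the problem reduces to proving $\tau=\tau^\circ$. The convex-analytic machinery of \S3 is not enough by itself: Corollary \ref{C3.2} only says that $\tau$ and $\tau^\circ$ are both maximisers of the concave, weak*-upper semicontinuous functional $\sigma\mapsto\eta_{\mathrm{orb},R}(\sigma)$ on the fibre $\{\sigma:\sigma\!\upharpoonright_{\mathcal{C}_R(\mathbf{x}_i)}=\tau_i,\ 1\le i\le n\}$, and this functional need not be strictly concave.

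The genuine ingredient — modelled, as the introduction indicates, on the transportation-cost inequality used for $\chi_\mathrm{orb}$ in \cite{HiaiMiyamotoUeda:IJM09} and \cite[\S2]{Ueda:IUMJ1x} — is an inequality of the form
$$
\mathrm{dist}(\tau,\tau^\circ)^2 \;\le\; -\,C\,\eta_\mathrm{orb}(\mathbf{X}_1,\dots,\mathbf{X}_n)
$$
for a suitable constant $C>0$ (depending only on $R$, $n$ and the $r(i)$), where
$$
\mathrm{dist}(\tau,\tau^\circ):=\inf\Bigl\{\Bigl(\sum_{i=1}^n\sum_{j=1}^{r(i)}\|Z_{ij}-Z_{ij}^\circ\|_{2,\rho}^2\Bigr)^{1/2}\Bigr\}
$$
is a free Wasserstein-type distance, the infimum running over all couplings — i.e.\ over tracial $W^*$-probability spaces $(\mathcal{N},\rho)$ carrying self-adjoint random multi-variables $\mathbf{Z}_i$ and $\mathbf{Z}_i^\circ$ with $\mathbf{Z}_i\stackrel{d}{=}\mathbf{X}_i$, $\mathbf{Z}_i^\circ\stackrel{d}{=}\mathbf{X}_i$ for every $i$, and with $\mathbf{Z}_1^\circ,\dots,\mathbf{Z}_n^\circ$ freely independent. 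Granting this inequality, the hypothesis $\eta_\mathrm{orb}(\mathbf{X}_1,\dots,\mathbf{X}_n)=0$ forces $\mathrm{dist}(\tau,\tau^\circ)=0$; since all operators in sight are uniformly bounded, a routine limiting argument then shows that the joint $*$-moments of $\mathbf{X}=(\mathbf{X}_i)_{i=1}^n$ coincide with those of the freely independent family, i.e.\ $\tau=\tau^\circ$, which finishes the proof.

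Proving this transportation-cost inequality is the substantial step, and the one I expect to be the main obstacle. I would deduce it from a finite-dimensional estimate at the level of the orbital free pressure, paralleling \cite{HiaiMiyamotoUeda:IJM09}: given microstates $\Xi_i(N)\in\Gamma_R(\mathbf{X}_i;N,m,\delta)$, one studies on $\mathrm{U}(N)^n$ the orbital Gibbs micro-ensemble proportional to $\exp\bigl(-N^2\mathrm{tr}_N(h((V_i\Xi_i(N)V_i^*)_{i=1}^n))\bigr)\,d\gamma_{\mathrm{U}(N)}^{\otimes n}$, for potentials $h\in\mathcal{C}_R(\mathbf{x})^{sa}$ constructed — via Lipschitz functional calculus applied to finitely many non-commutative polynomials, so that they stay inside $\mathcal{C}_R(\mathbf{x})^{sa}$ — to penalise the $2$-norm distance from the empirical orbital configuration $(V_i\Xi_i(N)V_i^*)_{i=1}^n$ to the free reference furnished by Haar-generic unitaries. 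The analytic inputs are: (a) the Haar measure $\gamma_{\mathrm{U}(N)}^{\otimes n}$ satisfies a quadratic (Talagrand-type) transportation-cost inequality with constant independent of $N$, coming from the uniform Ricci lower bound on $\mathrm{SU}(N)$; (b) bounded-log-density perturbations such as the above Gibbs ensemble inherit such an inequality with a controlled constant; and (c) Voiculescu's asymptotic freeness of independent Haar unitaries, which identifies the $\gamma_{\mathrm{U}(N)}^{\otimes n}$-typical empirical orbital configuration with $\tau^\circ$ in the limit $N\to\infty$. Tracking $\frac{1}{N^2}\log\int_{\mathrm{U}(N)^n}\exp\bigl(-N^2\mathrm{tr}_N(h((V_i\Xi_i(N)V_i^*)_{i=1}^n))\bigr)\,d\gamma_{\mathrm{U}(N)}^{\otimes n}(V_i)$ through these estimates, then letting $N\to\infty$ (via Proposition \ref{P2.4} in the hyperfinite case and its supremum-over-microstates analogue in general, cf.\ Definition \ref{D2.1}), $m\to\infty$, $\delta\searrow0$, and finally taking the infimum over the penalising potentials $h$ — which is precisely the infimum defining $\eta_{\mathrm{orb},R}(\tau)$ — yields the inequality with $\eta_\mathrm{orb}$ on the right; the reason it is $\eta_\mathrm{orb}$, and not merely $\chi_\mathrm{orb}$, that appears is that one controls the tilted \emph{pressure} $\pi_{\mathrm{orb},R}(h:(\tau_i)_{i=1}^n)$ and then optimises over $h$, rather than the raw volume of the orbital microstate space. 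The delicate points I anticipate are: (i) choosing the test potentials $h$ inside $\mathcal{C}_R(\mathbf{x})^{sa}$ while retaining quantitative control by the squared $2$-norm distance to the free reference; (ii) keeping the transportation constant of the Gibbs ensemble uniform in $N$ as the strength of the penalty varies; and (iii) arranging the four limits $N\to\infty$, $m\to\infty$, $\delta\searrow0$ and penalty-strength $\to0$ so that the pressure side converges to the quantities defining $\eta_{\mathrm{orb},R}(\tau)$.
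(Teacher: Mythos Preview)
Your overall strategy---reduce the converse to a transportation-cost inequality of Talagrand type bounding the distance from $\tau$ to the free product $\tau^\circ$ by $\sqrt{-\eta_\mathrm{orb}}$---is exactly the paper's, and your identification of the finite-dimensional input (the $T_2$ inequality for Haar measure on $\mathrm{SU}(N)^n$, as in \cite[Proposition 3.5]{HiaiMiyamotoUeda:IJM09}) is also right. But your proposed route to the inequality is considerably more circuitous than the paper's, and the mechanism you sketch for getting $\eta_\mathrm{orb}$ rather than $\chi_\mathrm{orb}$ on the right-hand side is not quite coherent as stated.

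The paper's shortcut is the one-line observation you are missing: since $\pi_{\mathrm{orb},R}(0:(\tau_i)_{i=1}^n)=0$, the hypothesis $\eta_{\mathrm{orb},R}(\tau)=0$ says precisely that $\tau(0)+\pi_{\mathrm{orb},R}(0:(\tau_i)_{i=1}^n)=\eta_{\mathrm{orb},R}(\tau)$, i.e.\ $\tau$ is an \emph{orbital equilibrium} tracial state associated with $h=0$ in the sense of Definition \ref{D4.1}. The transportation inequality is then proved (Proposition \ref{P5.2}) only for orbital equilibrium $\tau$, and the equilibrium assumption is what makes it work: when $\tau$ is the \emph{unique} equilibrium for $h$, a tangent-functional (G\^ateaux derivative) argument on $f\mapsto\log Z_{N_k}^{(f,\mathbf{A}^{(k)})}$ forces any weak*-limit of the orbital mean tracial states $\tau_{N_k}^{(h,\mathbf{A}^{(k)})}$ to be equilibrium for $h$, hence equal to $\tau$. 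Once you know $\tau_{N_k}^{(h,\mathbf{A}^{(k)})}\to\tau$, the finite-dimensional $T_2$ inequality from \cite{HiaiMiyamotoUeda:IJM09} gives
\[
W_2\bigl(\tau_{N_k}^{(h,\mathbf{A}^{(k)})},\tau_{N_k}^{(0,\mathbf{A}^{(k)})}\bigr)^2
\le 16R^2r\Bigl(-\tau_{N_k}^{(h,\mathbf{A}^{(k)})}(h)-\tfrac{1}{N_k^2}\log Z_{N_k}^{(h,\mathbf{A}^{(k)})}\Bigr),
\]
and the right-hand side converges to $16R^2r\bigl(-\tau(h)-\pi_{\mathrm{orb},R}(h:(\tau_i)_{i=1}^n)\bigr)=-16R^2r\,\eta_{\mathrm{orb},R}(\tau)$ \emph{precisely because of the equilibrium equality}. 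The non-unique case is reduced to the unique one by a standard perturbation argument \`a la \cite[Lemma 6.2.43]{BratteliRobinson:Book2}. No penalising potentials are ever constructed.

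By contrast, your plan is to manufacture potentials $h$ that penalise distance to the free configuration and then optimise over them. The difficulty is that the finite-$N$ transportation estimate controls $W_2$ between the \emph{Gibbs mean state} $\tau_N^{(h,\cdot)}$ and the Haar mean state, not between your fixed $\tau$ and $\tau^\circ$; you have no handle linking $\tau_N^{(h,\cdot)}$ to $\tau$ unless $\tau$ is equilibrium for that $h$. Your proposed cure---``optimise over $h$ so that the infimum is $\eta_{\mathrm{orb},R}(\tau)$''---does not by itself put $\tau$ on the left-hand side of the inequality; it only rewrites the right-hand side. This is exactly why the equilibrium hypothesis is not cosmetic in Proposition \ref{P5.2}. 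Your delicate points (i)--(iii) are real, but they are symptoms of working around the missing equilibrium observation rather than intrinsic obstacles; once you note that $\tau$ is equilibrium for $h=0$, all three evaporate.
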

 
We need the next transportation cost inequality for orbital equilibrium tracial states.  

\begin{proposition}\label{P5.2} Let $\tau \in TS(\mathcal{C}_R(\mathbf{x}))$ and set $\tau_i := \tau\!\upharpoonright_{\mathcal{C}_R(\mathbf{x}_i)}$, $1 \leq i \leq n$. If $\tau$ is an orbital equilibrium tracial state associated with some $h \in \mathcal{C}_R(\mathbf{x})^{sa}$, then $W_2(\tau,\bigstar_{i=1}^n \tau_i) \leq 4R\sqrt{r}\sqrt{-\eta_{\mathrm{orb},R}(\tau)}$ holds with $r := \max_{1\leq i \leq n} r(i)$, where $W_2$ denotes the free $2$-Wasserstein distance introduced by Biane and Voiculescu \cite{BianeVoiculescu:GAFA01}. 
\end{proposition}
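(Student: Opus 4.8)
The plan is to derive the bound from three ingredients: the transportation cost inequality for the orbital free entropy $\chi_\mathrm{orb}$, proved exactly as in \cite{HiaiMiyamotoUeda:IJM09} but now in the general framework of \cite{Ueda:IUMJ1x}; the representation of the orbital free pressure as a supremum involving $\chi_\mathrm{orb}$; and the joint convexity of the free $2$-Wasserstein distance. We may assume $\eta_{\mathrm{orb},R}(\tau)>-\infty$, since otherwise the right-hand side is $+\infty$; in particular all $\tau_i$ have f.d.a.\ by Proposition \ref{P3.3}\,(2). Fix $h\in\mathcal{C}_R(\mathbf{x})^{sa}$ with which $\tau$ is orbital equilibrium, and set $\mathcal{S}:=\{\sigma\in TS(\mathcal{C}_R(\mathbf{x}))\,|\,\sigma\!\upharpoonright_{\mathcal{C}_R(\mathbf{x}_i)}=\tau_i,\ 1\le i\le n\}$, a weak*-compact convex metrizable set with $\bigstar_{i=1}^n\tau_i\in\mathcal{S}$.

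\emph{Step 1: a $\chi_\mathrm{orb}$-transportation inequality.} I would first show that $W_2(\sigma,\bigstar_{i=1}^n\tau_i)^2\le16R^2r\,(-\chi_\mathrm{orb}(\sigma))$ for every $\sigma\in\mathcal{S}$, by the concentration argument of \cite{HiaiMiyamotoUeda:IJM09}. Given $m,\delta$, choose microstates $\mathbf{A}_i\in\Gamma_R(\tau_i;N,m,\delta)$ for which $\gamma_{\mathrm{U}(N)}^{\otimes n}(\Gamma_\mathrm{orb}(\sigma:\mathbf{A};N,m,\delta))\ge e^{N^2(\chi_\mathrm{orb}(\sigma)-\varepsilon)}$ for large $N$ (using the definition of $\bar\chi_{\mathrm{orb},R}$); by Voiculescu's asymptotic freeness the set $F_N$ of $(V_i)$ with $(V_i\mathbf{A}_iV_i^*)_i$ within $\delta$ in moments of $\bigstar_i\tau_i$ has $\gamma_{\mathrm{U}(N)}^{\otimes n}$-measure tending to $1$; and by the concentration of measure on $\mathrm{U}(N)^n$ in the normalized metric (a concentration function $e^{-N^2 t^2/4}$ being enough, as in \cite{HiaiMiyamotoUeda:IJM09}), for $t^2>4(\varepsilon-\chi_\mathrm{orb}(\sigma))$ the $t$-neighborhood of $\Gamma_\mathrm{orb}(\sigma:\mathbf{A};N,m,\delta)$ eventually meets $F_N$. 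Picking $(V_i)$ in the former and $(V_i')$ in the latter with $\sum_i\|V_i-V_i'\|_{2,\mathrm{tr}_N}^2\le t^2$ and using $\|V_i A_{ij}V_i^*-V_i'A_{ij}V_i'^*\|_{2,\mathrm{tr}_N}\le2R\|V_i-V_i'\|_{2,\mathrm{tr}_N}$ (valid since $\|A_{ij}\|_\infty\le R$) gives $\sum_{i,j}\|V_iA_{ij}V_i^*-V_i'A_{ij}V_i'^*\|_{2,\mathrm{tr}_N}^2\le4R^2r\,t^2$; letting $N\to\infty$ with $m\to\infty$, $\delta\searrow0$, $\varepsilon\searrow0$ along a diagonal sequence and passing to a matricial ultraproduct realizes $\sigma$ and $\bigstar_i\tau_i$ in a common tracial $W^*$-probability space at $\|\cdot\|_2$-distance $\le2R\sqrt r\cdot2\sqrt{-\chi_\mathrm{orb}(\sigma)}$, which is the claim.

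\emph{Steps 2 and 3: decomposing the equilibrium state and concluding.} Next I would use the identity $\pi_{\mathrm{orb},R}(h:(\tau_i)_{i=1}^n)=\sup\{-\sigma(h)+\chi_\mathrm{orb}(\sigma)\,|\,\sigma\in\mathcal{S}\}$, whose inequality $\ge$ is the computation in the proof of Theorem \ref{T3.5}\,(6) together with the norm-continuity in Proposition \ref{P2.2}\,(2), and whose reverse inequality is a Laplace-type upper bound obtained by covering $TS(\mathcal{C}_R(\mathbf{x}))$ by finitely many small weak*-balls and bounding each contribution to $\int_{\mathrm{U}(N)^n}e^{-N^2\mathrm{tr}_N h}\,d\gamma_{\mathrm{U}(N)}^{\otimes n}$ via \cite[Proposition 2.4]{Ueda:IUMJ1x}. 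Taking the infimum over $h$ in the definition of the orbital $\eta$-entropy, this says exactly that $\eta_{\mathrm{orb},R}\!\upharpoonright_\mathcal{S}$ is the upper semicontinuous concave envelope of $\chi_\mathrm{orb}\!\upharpoonright_\mathcal{S}$; since $\chi_\mathrm{orb}$ is upper semicontinuous and bounded above by $0$ and $\mathcal{S}$ is weak*-compact, there is a Borel probability measure $\mu$ on $\mathcal{S}$ with barycenter $\tau$ and $\int_\mathcal{S}\chi_\mathrm{orb}\,d\mu=\eta_{\mathrm{orb},R}(\tau)$. Combining $\chi_\mathrm{orb}(\sigma)\le\eta_{\mathrm{orb},R}(\sigma)\le\sigma(h)+\pi_{\mathrm{orb},R}(h:(\tau_i)_{i=1}^n)$ (Theorem \ref{T3.5}\,(6) and the definition of $\eta_{\mathrm{orb},R}$) with $\int_\mathcal{S}\sigma(h)\,d\mu(\sigma)=\tau(h)$ and $\tau(h)+\pi_{\mathrm{orb},R}(h:(\tau_i)_{i=1}^n)=\eta_{\mathrm{orb},R}(\tau)=\int_\mathcal{S}\chi_\mathrm{orb}\,d\mu$, the nonnegative function $\sigma\mapsto\sigma(h)+\pi_{\mathrm{orb},R}(h:(\tau_i)_{i=1}^n)-\chi_\mathrm{orb}(\sigma)$ has zero $\mu$-integral, hence $\mu$ is carried by states $\sigma\in\mathcal{S}$ with $-\infty<\chi_\mathrm{orb}(\sigma)=\eta_{\mathrm{orb},R}(\tau)+(\sigma-\tau)(h)$. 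For $\mu$-a.e.\ such $\sigma$, Step 1 gives $W_2(\sigma,\bigstar_i\tau_i)^2\le16R^2r\,(-\chi_\mathrm{orb}(\sigma))$; and $\sigma\mapsto W_2(\sigma,\bigstar_i\tau_i)^2$ is finite (all operators having norm $\le R$) and jointly convex --- given realizations of the $\sigma$'s in spaces $(\mathcal{M}_\sigma,\tau_\sigma)$, take the direct integral over $\mu$ --- so Jensen's inequality yields $W_2(\tau,\bigstar_i\tau_i)^2=W_2\bigl(\int_\mathcal{S}\sigma\,d\mu(\sigma),\bigstar_i\tau_i\bigr)^2\le\int_\mathcal{S}W_2(\sigma,\bigstar_i\tau_i)^2\,d\mu(\sigma)\le16R^2r\,\bigl(-\int_\mathcal{S}\chi_\mathrm{orb}\,d\mu\bigr)=16R^2r\,(-\eta_{\mathrm{orb},R}(\tau))$, i.e.\ $W_2(\tau,\bigstar_i\tau_i)\le4R\sqrt r\sqrt{-\eta_{\mathrm{orb},R}(\tau)}$.

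I expect the main obstacle to be Step 2: the upper-bound half of the pressure formula is a genuine large-deviation estimate that must be handled carefully against the definition of $\bar\chi_{\mathrm{orb},R}$ in \cite{Ueda:IUMJ1x} (where the supremum over approximating microstates has to be controlled uniformly in $N$), and extracting the barycentric decomposition of $\tau$ supported on $\chi_\mathrm{orb}$-optimal states with the same marginals $\tau_i$ also relies on the upper semicontinuity of $\chi_\mathrm{orb}$ on $TS(\mathcal{C}_R(\mathbf{x}))$. By comparison, Step 1 is routine once the sharp concentration constant for $\mathrm{U}(N)^n$ is invoked, and the convexity argument in Step 3 is formal.
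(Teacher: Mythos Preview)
Your route is genuinely different from the paper's. The paper argues directly with the orbital Gibbs ensembles: in the case where $\tau$ is the \emph{unique} orbital equilibrium state for $h$, one picks microstates $\mathbf{A}^{(k)}$ along a subsequence realizing $\pi_{\mathrm{orb},R}(h:(\tau_i))$, shows via a tangent-functional (G\^ateaux-derivative) argument that the associated orbital mean tracial states $\tau_{N_k}^{(h,\mathbf{A}^{(k)})}$ converge weak* to $\tau$, and applies the finite-$N$ transportation inequality between $\tau_{N_k}^{(h,\mathbf{A}^{(k)})}$ and $\tau_{N_k}^{(0,\mathbf{A}^{(k)})}$ (the latter tending to $\bigstar_i\tau_i$); the non-unique case is then handled by the standard density of $h$'s with unique equilibrium. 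By contrast you factor the problem through $\chi_\mathrm{orb}$: first the $\chi_\mathrm{orb}$-transportation inequality (this is exactly \cite[Proposition~4.4\,(8)]{HiaiMiyamotoUeda:IJM09} with the constant corrected in \cite[Appendix]{Ueda:IUMJ1x}, so Step~1 is citation rather than new work), then a Varadhan-type identity $\pi_{\mathrm{orb},R}(h:(\tau_i))=\sup_{\sigma\in\mathcal S}\{-\sigma(h)+\chi_\mathrm{orb}(\sigma)\}$ making $\eta_{\mathrm{orb},R}\!\upharpoonright_{\mathcal S}$ the usc concave envelope of $\chi_\mathrm{orb}\!\upharpoonright_{\mathcal S}$, then a Choquet-barycenter plus convexity-of-$W_2^2$ argument. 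If Step~2 goes through, your method actually yields more than the proposition: the envelope identification is of independent interest, and your Step~3 never uses the equilibrium hypothesis in an essential way (finiteness of $\chi_\mathrm{orb}$ $\mu$-a.e.\ already follows from $\int\chi_\mathrm{orb}\,d\mu=\eta_{\mathrm{orb},R}(\tau)>-\infty$ and $\chi_\mathrm{orb}\le0$), so you would get the inequality for \emph{every} $\tau$ with $\eta_{\mathrm{orb},R}(\tau)>-\infty$. The price is that Step~2's upper bound is a genuine large-deviation upper bound not proved in the paper; your covering sketch is on the right track, but carrying it out requires the upper semicontinuity of $\chi_\mathrm{orb}$ on $\mathcal S$ and a careful matching of the $(m,\delta)$ parameters in $\pi_{\mathrm{orb},R}(h;N,m,\delta)$ against those in the cover and in $\bar\chi_{\mathrm{orb},R}(\sigma_k;N,m_{\sigma_k},\delta_{\sigma_k})$ (one must take $m\ge\max_k m_{\sigma_k}$, $\delta\le\min_k\delta_{\sigma_k}$, and use that finitely many $k$ allows $\limsup_N\max_k\le\max_k\limsup_N$). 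The paper's approach sidesteps all of this by working with the Gibbs states themselves and the generic-uniqueness trick from \cite[Lemma~6.2.43]{BratteliRobinson:Book2}.
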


Note that the constant $\sqrt{r}$ contained in the transportation cost inequality was missing in the proof of \cite[Proposition 4.4 (8)]{HiaiMiyamotoUeda:IJM09} (see \cite[Appendix]{Ueda:IUMJ1x} for an explanation on this minor error).

\begin{proof} The proof is just an adaptation of the method of \cite[Theorem 3.1]{HiaiUeda:IDQP06} into the present framework. Hence we give only its sketch. We will replace $\mathrm{U}(N)$ with $\mathrm{SU}(N)$ in the discussion below. The orbital free pressure $\pi_{\mathrm{orb},R}(h:(\tau_i)_{i=1}^n)$ and all the others appearing below do not change by this replacement.  

Firstly, we deal with the case when $\tau$ is a \emph{unique} (under $\tau\!\upharpoonright_{\mathcal{C}_R(\mathbf{x}_i)} = \tau_i$, $1 \leq i \leq n$) orbital equilibrium tracial state associated with $h$. One can choose a subsequence $N_k$ and a sequence $\mathbf{A}^{(k)} = (\mathbf{A}_i^{(k)})_{i=1}^n$ so that 
$$
\pi_{\mathrm{orb},R}(h,(\tau_i)_{i=1}^n) = 
\lim_{k\to\infty}\frac{1}{N_k}\log\int_{\mathrm{SU}(N_k)^n} d\gamma_{\mathrm{SU}(N_k)}^{\otimes n}(V_i)\,\exp\bigl(-N_k^2\mathrm{tr}_{N_k}(h((V_i\mathbf{A}_i^{(k)} V_i^*)_{i=1}^n))\bigr). 
$$
Similarly to the definitions in \S\S4.2, for every $f \in \mathcal{C}_R(\mathbf{x})^{sa}$ we define the Gibbs micro-ensembles $\mu_{N_k}^{(f,\mathbf{A}^{(k)})}$ on $\mathrm{SU}(N_k)^n$ with the normalizing constant $Z_{N_k}^{(f,\mathbf{A}^{(k)})}$, and also define the mean tracial state $\tau_{N_k}^{(h,\mathbf{A}^{(k)})} \in TS(\mathcal{C}_R(\mathbf{x}))$ as the mean of $f \mapsto \mathrm{tr}_{N_k}(f((V_i\mathbf{A}^{(k)}V_i^*)_{i=1}^n))$ with respect to $\mu_{N_k}^{(h,\mathbf{A}^{(k)})}$. By taking a further subsequence if necessary we may and do assume that $\tau_{N_k}^{(h,\mathbf{A}^{(k)})}$ converges to some $\tau_0 \in TS(\mathcal{C}_R(\mathbf{x}))$ in the weak* topology as $k\to\infty$. Consider $f \mapsto \log Z_{N_k}^{(f,\mathbf{A}^{(k)})}$ as a function on $\mathcal{C}_R(\mathbf{x})^{sa}$. It is a convex function (see the proof of Proposition 2.2\,(4)), and its G\^{a}teaux derivative at $h$ in the direction $f-h$ becomes $-N_k^2 \tau_{N_k}^{(h,\mathbf{A}^{(k)})}(f-h)$; hence
$$ 
\log Z_{N_k}^{(f,\mathbf{A}^{(k)})} = 
\log Z_{N_k}^{(h+(f-h),\mathbf{A}^{(k)})} 
\geq  
\log Z_{N_k}^{(h,\mathbf{A}^{(k)})} - N_k^2 \tau_{N_k}^{(h,\mathbf{A}^{(k)})}(f-h)
$$
for every $f \in \mathcal{C}_R(\mathbf{x})^{sa}$ (see \cite[p.297]{BratteliRobinson:Book2}). Therefore, dividing by $N_k^2$ and letting $k\to\infty$ we get $\tau_0(h) + \pi_{\mathrm{orb},R}(h:(\tau_i)_{i=1}^n) \leq \tau_0(f) + \pi_{\mathrm{orb},R}(f:(\tau_i)_{i=1}^n)$ for every $f \in \mathcal{C}_R(\mathbf{x})^{sa}$. This together with the uniqueness of $\tau$ implies that $\tau_{N_k}^{(h,\mathbf{A}^{(k)})}$ converges to $\tau$ itself in the weak* topology as $k \to \infty$. Then, as in \cite[Proposition 3.5]{HiaiMiyamotoUeda:IJM09} and \cite[Appendix]{Ueda:IUMJ1x} we have 
$$
W_2(\tau_{N_k}^{(h,\mathbf{A}^{(k)})},\tau_{N_k}^{(0,\mathbf{A}^{(k)})})^2 \leq 
16R^2 r \biggl(-\tau_{N_k}^{(h,\mathbf{A}^{(k)})}(h) 
- \frac{1}{N_k^2}\log Z_{N_k}^{(h,\mathbf{A}^{(k)})}\biggr), 
$$
and letting $k\to\infty$ we get the desired inequality, since the mean tracial state $\tau_{N_k}^{(0,\mathbf{A}^{(k)})}$ converges to the free product tracial state $\bigstar_{i=1}^n\tau_i$ in the weak* topology as $k\to\infty$ (see \cite[Lemma 3.3]{HiaiMiyamotoUeda:IJM09}).  

The not necessarily unique (under $\tau\!\upharpoonright_{\mathcal{C}_R(\mathbf{x}_i)} = \tau_i$, $1 \leq i \leq n$) orbital equilibrium case can be reduced to the previous one with the help of a standard method based on e.g., \cite[Lemma 6.2.43]{BratteliRobinson:Book2}, see the final part of the proof of \cite[Theorem 3.1]{HiaiUeda:IDQP06}. 
\end{proof} 

\begin{proof} (Theorem \ref{T5.1}) The `only if' part is Theorem \ref{T3.5}\,(8). If $\eta_\mathrm{orb}(\mathbf{X}_1,\dots,\mathbf{X}_n) = 0$, then $\tau_\mathbf{X}^{(R)}$ with $R := \max_{i,j}\Vert X_{ij}\Vert_\infty$ must be orbital equilibrium associated with $h=0$ so that the desired assertion immediately follows by the transportation cost inequality established in Proposition \ref{P5.2}.  
\end{proof}   

\section{A representation of $\chi_\mathrm{orb}$ as Legendre transform}

As shown in \cite[\S6]{Hiai:CMP05} the microstate free entropy $\chi$ can be written as the minus Legendre transform of the `double' free pressure. The importance of this representation of $\chi$ was explained in \cite[p.246--249]{Guionnet:LNM09}. Hence it is worthwhile to provide its $\chi_\mathrm{orb}$-counterpart for the reference in future research. 

Let $\mathbf{x}_i = (x_{ij})_{j=1}^{r(i)}$, $1 \leq i \leq n$, be non-commutative multi-indeterminates. For $R>0$ let $\mathcal{C}_R(\mathbf{x})$ be the universal $C^*$-free product introduced in \S2 and consider the minimal $C^*$-tensor product
$\mathcal{C}_R(\mathbf{x})\otimes_{\min}\mathcal{C}_R(\mathbf{x})$, whose norm is denoted
by the same $\Vert-\Vert_R$. When $\mathbf{A}=(\mathbf{A}_i)_{i=1}^n$,
$\mathbf{A}_i=(A_{ij})_{j=1}^{r(i)}\in(M_N(\mathbf{C})_R^{sa})^{r(i)}$, $1\le i\le n$, \
are given, one can define the $*$-homomorphism
$$
h\in\mathcal{C}_R(\mathbf{x})\otimes_{\min}\mathcal{C}_R(\mathbf{x})\mapsto
h(\mathbf{A})\in M_N(\mathbb{C})\otimes M_N(\mathbb{C})
$$
to be the tensor product of the $*$-homomorphism
$f\in\mathcal{C}_R(\mathbf{x})\mapsto f(\mathbf{A})\in M_N(\mathbb{C})$ (defined in \S2)
so that $(f\otimes g)(\mathbf{A})=f(\mathbf{A})\otimes g(\mathbf{A})$ for
$f,g\in\mathcal{C}_R(\mathbf{x})$.

\begin{definition}\label{D6.1} Let $\tau_i\in TS(\mathcal{C}(\mathbf{x}_i))$,
$1\le i\le n$, be given. For each $h=h^* \in \mathcal{C}_R(\mathbf{x})\otimes_\mathrm{min}\mathcal{C}_R(\mathbf{x})$ we define 
\begin{align*} 
&\pi_{\mathrm{orb},R}^{(2)}(h:(\tau_i)_{i=1}^n\,;N,m,\delta) \\
&\quad:= \sup_{\mathbf{A}_i \in \Gamma_R(\tau_i\,;N,m,\delta) \atop 1 \leq i \leq n} 
\log \int_{\mathrm{U}(N)^n} d\gamma_{\mathrm{U}(N)}^{\otimes n}(V_i)\,
\exp\bigl(-N^2(\mathrm{tr}_N\otimes\mathrm{tr}_N)(h((V_i\mathbf{A}_i V_i^*)_{i=1}^n)\bigr),
\\
&\pi_{\mathrm{orb},R}^{(2)}(h:(\tau_i)_{i=1}^n\,;m,\delta) 
:= 
\limsup_{N\to\infty} \frac{1}{N^2}\,\pi_{\mathrm{orb},R}^{(2)}(h:(\tau_i)_{i=1}^n\,;N,m,\delta),
\\
&\pi_{\mathrm{orb},R}^{(2)}(h:(\tau_i)_{i=1}^n) 
:= 
\lim_{m\to\infty\atop\delta\searrow0} \pi_{\mathrm{orb},R}^{(2)}(h:(\tau_i)\,;m,\delta)
= \inf_{m \in \mathbb{N}\atop\delta>0} \pi_{\mathrm{orb},R}^{(2)}(h:(\tau_i)\,;m,\delta), 
\end{align*}
where the first quantity should be read $-\infty$ when $\Gamma_R(\tau_i\,;N,m,\delta) = \emptyset$ for some $1 \leq i \leq n$. 
\end{definition}  

Let $\mathbf{X}_i=(X_{ij})_{j=1}^{r(i)}$, $1\le i\le n$, be self-adjoint random multi-variables
with $\|X_{ij}\|_\infty\le R$ in a $W^*$-probability space $(\mathcal{M},\tau)$. The
tracial states $\tau_\mathbf{X}^{(R)}\in TS(\mathcal{C}_R(\mathbf{x}))$ and
$\tau_{\mathbf{X}_i}^{(R)}\in TS(\mathcal{C}_R(\mathbf{x}_i))$, $1\le i\le n$, are defined
as in Theorem \ref{T3.4}. We then have the next representation of $\chi_\mathrm{orb}$.

\begin{proposition}\label{P6.1} With the assumption and the notations above,
\begin{align*}
&\chi_\mathrm{orb}(\mathbf{X}_1,\dots,\mathbf{X}_n) \\
&\quad= 
\inf\big\{(\tau_\mathbf{X}^{(R)}\otimes\tau_\mathbf{X}^{(R)})(h) + \pi_{\mathrm{orb},R}^{(2)}(h:(\tau_{\mathbf{X}_i}^{(R)})_{i=1}^n)\,\big|\, h=h^* \in \mathcal{C}_R(\mathbf{x})\otimes_\mathrm{min}\mathcal{C}_R(\mathbf{x}) \big\} \\
&\quad= 
\inf\big\{(\tau_\mathbf{X}^{(R)}\otimes\tau_\mathbf{X}^{(R)})(p) + \pi_{\mathrm{orb},R}^{(2)}(p:(\tau_{\mathbf{X}_i}^{(R)})_{i=1}^n)\,\big|\, p=p^* \in \mathbb{C}\langle\mathbf{x}\rangle\otimes\mathbb{C}\langle\mathbf{x}\rangle \big\} 
\end{align*}
without any assumption imposed on the $\mathbf{X}_i$. In particular, the infimum
expressions above are independent of the choice of $R\geq\max_{i,j}\|X_{ij}\|_\infty$. 
\end{proposition}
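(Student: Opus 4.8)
The plan is to adapt the representation of $\chi$ as the minus Legendre transform of the double free pressure in \cite[\S6]{Hiai:CMP05} to the orbital framework. Write $\tau:=\tau_\mathbf{X}^{(R)}$, $\tau_i:=\tau_{\mathbf{X}_i}^{(R)}$, and let $L$ and $L_\mathrm{poly}$ be respectively the first and second infimum expressions in the statement; clearly $L\le L_\mathrm{poly}$ since polynomials form a subset. It suffices to prove (A) $(\tau\otimes\tau)(h)+\pi_{\mathrm{orb},R}^{(2)}(h:(\tau_i)_{i=1}^n)\ge\chi_\mathrm{orb}(\mathbf{X}_1,\dots,\mathbf{X}_n)$ for every self-adjoint $h$ (giving $\chi_\mathrm{orb}\le L\le L_\mathrm{poly}$), and (B) $L_\mathrm{poly}\le\chi_\mathrm{orb}(\mathbf{X}_1,\dots,\mathbf{X}_n)$; together $\chi_\mathrm{orb}=L=L_\mathrm{poly}$, and the $R$-independence is inherited from that of $\chi_\mathrm{orb}$ (\cite[Corollary 2.7]{Ueda:IUMJ1x}). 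One may assume all $\mathbf{X}_i$ have f.d.a., since otherwise $\chi_\mathrm{orb}=-\infty$ and $\pi_{\mathrm{orb},R}^{(2)}(h:(\tau_i)_{i=1}^n)=-\infty$ for every $h$ by the reasoning of Proposition \ref{P2.2}\,(1). Throughout I use $\chi_\mathrm{orb}(\mathbf{X}_1,\dots,\mathbf{X}_n)=\inf_{m,\delta}\limsup_{N\to\infty}N^{-2}\bar\chi_{\mathrm{orb},R}(\mathbf{X}_1,\dots,\mathbf{X}_n\,;N,m,\delta)$ from \cite[Proposition 2.4, Corollary 2.7]{Ueda:IUMJ1x}, where $\bar\chi_{\mathrm{orb},R}(\mathbf{X}_1,\dots,\mathbf{X}_n\,;N,m,\delta)=\sup_{\mathbf{A}_i\in\Gamma_R(\mathbf{X}_i;N,m,\delta)}\log\gamma_{\mathrm{U}(N)}^{\otimes n}\bigl(\Gamma_\mathrm{orb}(\mathbf{X}_1,\dots,\mathbf{X}_n:(\mathbf{A}_i)_{i=1}^n\,;N,m,\delta)\bigr)$.

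For (A), I first note that $h\mapsto\pi_{\mathrm{orb},R}^{(2)}(h:(\tau_i)_{i=1}^n)$ is $\|\cdot\|_R$-Lipschitz, proved by copying the proof of Proposition \ref{P2.2}\,(2) (using that $\mathrm{tr}_N\otimes\mathrm{tr}_N$ is a state and $\mathcal{C}_R(\mathbf{x})\otimes_\mathrm{min}\mathcal{C}_R(\mathbf{x})\to M_N(\mathbb{C})\otimes M_N(\mathbb{C})$ is contractive); since $h\mapsto(\tau\otimes\tau)(h)$ is also $1$-Lipschitz and $\mathbb{C}\langle\mathbf{x}\rangle\otimes\mathbb{C}\langle\mathbf{x}\rangle$ is norm-dense, it is enough to treat a self-adjoint polynomial $p=\sum_k f_k\otimes g_k$. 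Given $\varepsilon>0$, because $(\mathrm{tr}_N\otimes\mathrm{tr}_N)(p(\mathbf{B}))=\sum_k\mathrm{tr}_N(f_k(\mathbf{B}))\,\mathrm{tr}_N(g_k(\mathbf{B}))$ and moments of matrices of norm $\le R$ are uniformly bounded, one chooses $m_0,\delta_0$ with $\mathbf{B}\in\Gamma_R(\mathbf{X}_1,\dots,\mathbf{X}_n\,;N,m_0,\delta_0)\Rightarrow|(\mathrm{tr}_N\otimes\mathrm{tr}_N)(p(\mathbf{B}))-(\tau\otimes\tau)(p)|<\varepsilon$. Then for $m\ge m_0$, $\delta\le\delta_0$ and $\mathbf{A}_i\in\Gamma_R(\mathbf{X}_i;N,m,\delta)$, restricting the integral in Definition \ref{D6.1} to $\Gamma_\mathrm{orb}(\mathbf{X}_1,\dots,\mathbf{X}_n:(\mathbf{A}_i)_{i=1}^n\,;N,m,\delta)$, bounding the weight below by $e^{-N^2((\tau\otimes\tau)(p)+\varepsilon)}$ there, and taking the supremum over $\mathbf{A}_i$, gives $\pi_{\mathrm{orb},R}^{(2)}(p:(\tau_i)_{i=1}^n\,;N,m,\delta)\ge-N^2((\tau\otimes\tau)(p)+\varepsilon)+\bar\chi_{\mathrm{orb},R}(\mathbf{X}_1,\dots,\mathbf{X}_n\,;N,m,\delta)$. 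Dividing by $N^2$, taking $\limsup_N$, then $\inf$ over $m,\delta$ (legitimately over $m\ge m_0,\delta\le\delta_0$, which equals the full infimum by monotonicity), and finally $\varepsilon\searrow0$ yields (A) for $p$; this is the tensor-squared orbital analogue of the argument in the proof of Theorem \ref{T3.5}\,(6).

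For (B) I use a centered sum-of-squares test polynomial. Fix $\varepsilon>0$ and choose $m\in\mathbb{N}$, $\delta>0$ with $\limsup_N N^{-2}\bar\chi_{\mathrm{orb},R}(\mathbf{X}_1,\dots,\mathbf{X}_n\,;N,m,\delta)\le\chi_\mathrm{orb}(\mathbf{X}_1,\dots,\mathbf{X}_n)+\varepsilon$ (or $\le-M$ for a prescribed $M$ when $\chi_\mathrm{orb}=-\infty$). Set $c_w:=\tau_\mathbf{X}^{(R)}(w)$ and, for $\lambda>0$,
\[
h_\lambda:=\lambda\sum_{|w|\le m}(w-c_w1)\otimes(w^*-c_{w^*}1)\in\mathbb{C}\langle\mathbf{x}\rangle\otimes\mathbb{C}\langle\mathbf{x}\rangle,
\]
the sum over words $w$ of length $\le m$ in the combined indeterminates. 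One checks $h_\lambda=h_\lambda^*$, $(\tau\otimes\tau)(h_\lambda)=0$, and $(\mathrm{tr}_N\otimes\mathrm{tr}_N)(h_\lambda(\mathbf{B}))=\lambda\sum_{|w|\le m}|\mathrm{tr}_N(w(\mathbf{B}))-c_w|^2\ge0$ for every matrix tuple $\mathbf{B}$. Hence, for $\mathbf{A}_i\in\Gamma_R(\mathbf{X}_i;N,m,\delta)$, the weight $\exp\bigl(-N^2(\mathrm{tr}_N\otimes\mathrm{tr}_N)(h_\lambda((V_i\mathbf{A}_iV_i^*)_{i=1}^n))\bigr)$ is $\le1$ on $\Gamma_\mathrm{orb}(\mathbf{X}_1,\dots,\mathbf{X}_n:(\mathbf{A}_i)_{i=1}^n\,;N,m,\delta)$ and $\le e^{-N^2\lambda\delta^2}$ off it; splitting the integral, using $\log(a+b)\le\log2+\max(\log a,\log b)$, and taking the supremum over $\mathbf{A}_i$ give
\[
\pi_{\mathrm{orb},R}^{(2)}(h_\lambda:(\tau_i)_{i=1}^n\,;N,m,\delta)\le\log2+\max\bigl(\bar\chi_{\mathrm{orb},R}(\mathbf{X}_1,\dots,\mathbf{X}_n\,;N,m,\delta),\,-N^2\lambda\delta^2\bigr).
\]
Dividing by $N^2$, taking $\limsup_N$, and using $\pi_{\mathrm{orb},R}^{(2)}(h_\lambda:(\tau_i)_{i=1}^n)\le\limsup_N N^{-2}\pi_{\mathrm{orb},R}^{(2)}(h_\lambda:(\tau_i)_{i=1}^n\,;N,m,\delta)$, we get $\pi_{\mathrm{orb},R}^{(2)}(h_\lambda:(\tau_i)_{i=1}^n)\le\max(\chi_\mathrm{orb}(\mathbf{X}_1,\dots,\mathbf{X}_n)+\varepsilon,\,-\lambda\delta^2)$. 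Since $\lambda$ affects only the second term, I now send $\lambda\to\infty$ (with $m,\delta$ fixed), obtaining $\pi_{\mathrm{orb},R}^{(2)}(h_\lambda:(\tau_i)_{i=1}^n)\le\chi_\mathrm{orb}(\mathbf{X}_1,\dots,\mathbf{X}_n)+\varepsilon$, hence $L_\mathrm{poly}\le(\tau\otimes\tau)(h_\lambda)+\pi_{\mathrm{orb},R}^{(2)}(h_\lambda:(\tau_i)_{i=1}^n)\le\chi_\mathrm{orb}(\mathbf{X}_1,\dots,\mathbf{X}_n)+\varepsilon$; let $\varepsilon\searrow0$. The case $\chi_\mathrm{orb}=-\infty$ is handled identically via the $-M$ variant, yielding $L_\mathrm{poly}\le-M$ for all $M$.

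The main obstacle is step (B): one must exhibit a \emph{single} test element whose double orbital pressure recovers the orbital microstate volume rate with no uncontrolled additive error. The resolution is the centered sum-of-squares $h_\lambda$: centering forces $(\tau\otimes\tau)(h_\lambda)=0$ for free, and the sum-of-squares shape turns $\exp(-N^2(\mathrm{tr}_N\otimes\mathrm{tr}_N)(h_\lambda(\cdot)))$ into an approximate indicator of the orbital microstate set. The only genuine care is the order of limits — fix $m,\delta$ first so that the $\bar\chi_{\mathrm{orb},R}$-term is within $\varepsilon$ of $\chi_\mathrm{orb}$, and only afterwards drive $\lambda\to\infty$ so that the tail $e^{-N^2\lambda\delta^2}$ is absorbed — together with the routine bookkeeping when $\chi_\mathrm{orb}=-\infty$. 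Everything else (the Lipschitz estimate, the moment approximation in (A)) is routine and parallels \cite{Hiai:CMP05} and the proof of Theorem \ref{T3.5}\,(6).
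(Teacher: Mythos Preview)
Your proof is correct and follows essentially the same approach as the paper: the lower bound (A) restricts the double-pressure integral to the orbital microstate set exactly as in the proof of Theorem~\ref{T3.5}\,(6), and for the upper bound (B) you use the same centered sum-of-squares test element (your $h_\lambda$ is the paper's $p_{m,\beta}$ with $\lambda=\beta/\delta^2$). The only cosmetic differences are that you invoke the Lipschitz bound up front to reduce (A) to polynomials rather than carrying the $\varepsilon$-approximation through, and in (B) you use $\log(a+b)\le\log2+\max(\log a,\log b)$ where the paper uses $(a+b)^{1/N^2}\le a^{1/N^2}+b^{1/N^2}$.
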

\begin{proof}
The proof below is essentially same as that of \cite[Theorem 6.4]{Hiai:CMP05}.
In the following we write $\tau_i=\tau_{\mathbf{X}_i}^{(R)}$ for simplicity and
$\mathbf{A}_i\in(M_N(\mathbb{C})_R^{sa})^{r(i)}$, $1\le i\le n$, for matricial
multi-microstates.
We may and do assume that all $\mathbf{X}_i$ have f.d.a.~by the definition of $\pi^{(2)}_{\mathrm{orb},R}$ and \cite[Theorem 2.6\,(2)]{Ueda:IUMJ1x}. Let $h=h^* \in \mathcal{C}_R(\mathbf{x})\otimes_\mathrm{min}\mathcal{C}_R(\mathbf{x})$ be arbitrarily given. For a given $\varepsilon>0$ one can choose an element 
$p=p^*\in\mathbb{C}\langle\mathbf{x}\rangle\otimes\mathbb{C}\langle\mathbf{x}\rangle$
(a non-commutative polynomial in `double' $\mathbf{x}$)
such that $\Vert h-p\Vert_R < \varepsilon$. Hence 
$$
\big|\pi_{\mathrm{orb},R}^{(2)}(h:(\tau_i)_{i=1}^n\,N,m,\delta) - \pi_{\mathrm{orb},R}^{(2)}(p:(\tau_i)_{i=1}^n\,N,m,\delta)\big| 
< N^2\varepsilon
$$
for every $N \in \mathbb{N}$, $m \in \mathbb{N}$ and $\delta>0$. (This is confirmed in the exactly same way as the proof of Proposition \ref{P2.2}\,(2).) Looking at $p$ one can choose $m_0 \in \mathbb{N}$ and $\delta_0 > 0$ in such a way that for every $N \in \mathbb{N}$  
$$
\big|(\mathrm{tr}_N\otimes\mathrm{tr}_N)(p((V_i\mathbf{A}_i V_i^*)_{i=1}^n)) - (\tau_\mathbf{X}^{(R)}\otimes\tau_\mathbf{X}^{(R)})(p)\big| < \varepsilon
$$
holds whenever $(V_i)_{i=1}^n \in \Gamma_\mathrm{orb}(\mathbf{X}_1,\dots,\mathbf{X}_n:(\mathbf{A}_i)_{i=1}^n\,;N,m,\delta)$ with $m \geq m_0$ and $0 < \delta \leq \delta_0$. Consequently,     
\begin{align*}
&\pi_{\mathrm{orb},R}^{(2)}(h:(\tau_i)_{i=1}^n\,;N,m,\delta) + N^2\varepsilon %\\&\quad
> \pi_{\mathrm{orb},R}^{(2)}(p:(\tau_i)_{i=1}^n\,;N,m,\delta) \\
&\quad\geq
\sup_{\mathbf{A}_i \in \Gamma_R(\tau_i\,;N,m,\delta) \atop 1 \leq i \leq n} 
\log \Big[\int_{\Gamma_\mathrm{orb}(\mathbf{X}_1,\dots,\mathbf{X}_n:(\mathbf{A}_i)_{i=1}^n\,;N,m,\delta)} d\gamma_{\mathrm{U}(N)}^{\otimes n}(V_i) \\
&\qquad\qquad\qquad\qquad\qquad\qquad\times
\exp\bigl(-N^2(\mathrm{tr}_N\otimes\mathrm{tr}_N)(p((V_i\mathbf{A}_i V_i^*)_{i=1}^n))\bigr)\Big] \\
&\quad\geq 
-N^2\big((\tau_\mathbf{X}^{(R)}\otimes\tau_\mathbf{X}^{(R)})(p) + \varepsilon\big) + 
\bar{\chi}_{\mathrm{orb},R}(\mathbf{X}_1,\dots,\mathbf{X}_n\,;N,m,\delta). 
\end{align*}
(See \cite[Equation (2.3)]{Ueda:IUMJ1x} for the definition of $\bar{\chi}_{\mathrm{orb},R}(\mathbf{X}_1,\dots,\mathbf{X}_n\,;N,m,\delta)$.) This implies by \cite[Proposition 2.4 and Corollary 2.7]{Ueda:IUMJ1x} that
\begin{align*}
\chi_\mathrm{orb}(\mathbf{X}_1,\dots,\mathbf{X}_n)
&\le(\tau_\mathbf{X}^{(R)}\otimes\tau_\mathbf{X}^{(R)})(p)
+ \pi_{\mathrm{orb},R}^{(2)}(h:(\tau_i)_{i=1}^n\,;m,\delta) + 2\varepsilon \\
&\le(\tau_\mathbf{X}^{(R)}\otimes\tau_\mathbf{X}^{(R)})(h)
+ \pi_{\mathrm{orb},R}^{(2)}(h:(\tau_i)_{i=1}^n\,;m,\delta) + 3\varepsilon.
\end{align*}
Letting $m\to\infty$ and $\delta\searrow0$ yields that $\chi_\mathrm{orb}(\mathbf{X}_1,\dots,\mathbf{X}_n)$ is not greater than the first infimum in the asserted identities.

Next, choose an arbitrary $\alpha > \chi_\mathrm{orb}(\mathbf{X}_1,\dots,\mathbf{X}_n) = \chi_{\mathrm{orb},R}(\mathbf{X}_1,\dots,\mathbf{X}_n)$. Then there exist $m \in \mathbb{N}$ and $\delta>0$ so that $\limsup_{N\to\infty}\frac{1}{N^2}\,\bar{\chi}_{\mathrm{orb},R}(\mathbf{X}_1,\dots,\mathbf{X}_n\,;N,m,\delta) < \alpha$. For any $\beta>0$ define
$p_{m,\beta}= p_{m,\beta}^*\in \mathbb{C}\langle\mathbf{x}\rangle\otimes\mathbb{C}\langle\mathbf{x}\rangle$ to be 
$$
\frac{\beta}{\delta^2}\sum_{l=1}^m \sum_{(i_k,j_k) \atop 1 \leq k \leq l} (x_{i_1 j_1}\cdots x_{i_l j_l} - \tau_\mathbf{X}^{(R)}(x_{i_1 j_1}\cdots x_{i_l j_l}))\otimes (x_{i_1 j_1}\cdots x_{i_l j_l} - \tau_\mathbf{X}^{(R)}(x_{i_1 j_1}\cdots x_{i_l j_l}))^*. 
$$
In what follows, we write $\Gamma((\mathbf{A}_i)_i,N,m,\delta) := \Gamma_\mathrm{orb}(\mathbf{X}_1,\dots,\mathbf{X}_n:(\mathbf{A}_i)_{i=1}^n\,;N,m,\delta)$ for short. Since
$(V_i)_{i=1}^n \not\in \Gamma((\mathbf{A}_i)_i\,;N,m,\delta)$ forces 
$$
(\mathrm{tr}_N\otimes\mathrm{tr}_N)(p_{m,\beta}((V_i\mathbf{A}_i V_i^*)_{i=1}^n) \geq \beta,
$$ 
we have
\begin{align*}
&\int_{\mathrm{U}(N)^n} d\gamma_{\mathrm{U}(N)}^{\otimes n}(V_i) \exp\bigl(-N^2(\mathrm{tr}_N\otimes\mathrm{tr}_N)(p_{m,\beta}((V_i\mathbf{A}_i V_i^*)_{i=1}^n)\bigr) \\
&\quad= 
\left(\int_{\Gamma((\mathbf{A}_i)_i,N,m,\delta)} + \int_{\Gamma((\mathbf{A}_i)_i,N,m,\delta)^c}\right)d\gamma_{\mathrm{U}(N)}^{\otimes n}(V_i) \\
&\qquad\qquad\qquad\qquad\times
\exp\bigl(-N^2(\mathrm{tr}_N\otimes\mathrm{tr}_N)(p_{m,\beta}((V_i\mathbf{A}_i V_i^*)_{i=1}^n)\bigr) \\
&\quad\leq \gamma_{\mathrm{U}(N)}^{\otimes n}(\Gamma((\mathbf{A}_i)_i,N,m,\delta)) + \exp(-N^2\beta).
\end{align*}
Since $(\tau_\mathbf{X}^{(R)}\otimes\tau_\mathbf{X}^{(R)})(p_{m,\beta}) = 0$, we therefore
obtain
\begin{align*}
&\inf\big\{(\tau_\mathbf{X}^{(R)}\otimes\tau_\mathbf{X}^{(R)})(p) + \pi_{\mathrm{orb},R}^{(2)}(p:(\tau_i)_{i=1}^n)\,\big|\, p=p^* \in \mathbb{C}(\mathbf{x})\otimes\mathbb{C}(\mathbf{x}) \big\} \\
%&\qquad\leq
%(\tau_\mathbf{X}^{(R)}\otimes\tau_\mathbf{X}^{(R)})(p_{m,\beta}) + 
%\frac{1}{N^2}\pi_{\mathrm{orb},R}^{(2)}(p_{m,\beta}:(\tau_{\mathbf{X}_i}^{(R)})) \\
&\quad\leq \limsup_{N\to\infty}\frac{1}{N^2}\pi_{\mathrm{orb},R}^{(2)}(p_{m,\beta}:(\tau_i)_{i=1}^n\,;N,m,\delta) \\
&\quad\leq 
\limsup_{N\to\infty}\log\bigl(\exp\bar{\chi}_{\mathrm{orb},R}(\mathbf{X}_1,\dots,\mathbf{X}_n\,;N,m,\delta) + \exp(-N^2\beta)\bigr)^{1/N^2} \\
&\quad\leq 
\limsup_{N\to\infty}\log\biggl[\exp\biggl(\frac{1}{N^2}\,\bar{\chi}_{\mathrm{orb},R}(\mathbf{X}_1,\dots,\mathbf{X}_n\,;N,m,\delta)\biggr) + \exp(-\beta)\biggr] \\
&\quad=
\log\biggl[\exp\biggl(\limsup_{N\to\infty}\frac{1}{N^2}\,\bar{\chi}_{\mathrm{orb},R}(\mathbf{X}_1,\dots,\mathbf{X}_n\,;N,m,\delta)\biggr) + \exp(-\beta)\biggr] \\
&\quad\leq 
\log\big(\exp\alpha + \exp(-\beta)\big) \searrow \alpha 
\end{align*}
as $\beta \nearrow \infty$. Thus the second infimum
is not greater than $\chi_\mathrm{orb}(\mathbf{X}_1,\dots,\mathbf{X}_n)$ in the desired identities.  
\end{proof}

\section{Orbital equilibrium tracial states  arising from Random matrix models}  
Let $\mathbf{u} = (u_i,u_i^*)_{i=1}^n$ (or $(u_i)_{i=1}^n$ for short) and $\mathbf{z}_i = (z_{ij})_{j=1}^{r(i)}$ be families of indeterminates and set $\mathbf{z} := \mathbf{z}_1\sqcup\cdots\sqcup\mathbf{z}_n$. Let $\mathbb{C}\langle\mathbf{u},\mathbf{z}\rangle$ be the universal $*$-algebra generated by those indeterminates subject to the relations $u_i u_i^* = u_i^* u_i = 1$, $1 \leq i \leq n$, and $z_{ij} = z_{ij}^*$, $1 \leq i \leq n$, $1 \leq j \leq r(i)$. For a given $R>0$ we also define $\mathcal{C}_R(\mathbf{u},\mathbf{z})$ to be the universal $C^*$-free product
$$
C(\mathbb{T})^{\star n} \star \big(\bigstar_{i=1}^n C[-R,R]^{\star r(i)}\big)
$$
with identifications $u_i(\zeta) = \zeta$ in the $i$th copy of $C(\mathbb{T})$ and $z_{ij}(t) = t$ in the $(i,j)$th copy of $C[-R,R]$. Note that $\mathbb{C}\langle\mathbf{u},\mathbf{z}\rangle$ canonically sits inside $\mathcal{C}_R(\mathbf{u},\mathbf{z})$ with keeping the same symbols of generators. There is a unique derivations $\partial_i : \mathbb{C}\langle\mathbf{u},\mathbf{z}\rangle \rightarrow \mathbb{C}\langle\mathbf{u},\mathbf{z}\rangle\otimes_\mathrm{alg}\mathbb{C}\langle\mathbf{u},\mathbf{z}\rangle$, $1\le i \le n$, determined by 
\begin{equation*}
\partial_i u_j := \delta_{ij} u_i\otimes1, \quad 
\partial_i u_j^* := -\delta_{ij} 1\otimes u_i^*, \quad 
\partial_i\!\upharpoonright_{\mathbb{C}\langle\mathbf{z}\rangle} := 0,
\qquad1\le j \le n,  
\end{equation*}
where $\mathbb{C}\langle\mathbf{z}\rangle \subset \mathbb{C}\langle\mathbf{u},\mathbf{z}\rangle$ is the unital $*$-subalgebra generated by $\mathbf{z}$. Let $\theta : \mathbb{C}\langle\mathbf{u},\mathbf{z}\rangle\otimes_\mathrm{alg}\mathbb{C}\langle\mathbf{u},\mathbf{z}\rangle \rightarrow \mathbb{C}\langle\mathbf{u},\mathbf{z}\rangle$ be defined by $\theta(a\otimes b):= ba$, and set $D_i :=\theta\circ\partial_i : \mathbb{C}\langle\mathbf{u},\mathbf{z}\rangle \rightarrow \mathbb{C}\langle\mathbf{u},\mathbf{z}\rangle$, $1\le i \le n$. 

Write $x_{ij} := u_i z_{ij} u_i^*$, $1 \leq i \leq n$, $1 \leq j \leq r(i)$, and set $\mathbf{x}_i := (x_{ij})_{j=1}^{r(i)}$, $1 \leq i \leq n$, and $\mathbf{x} := \mathbf{x}_1\sqcup\cdots\sqcup\mathbf{x}_n$. Fix a non-commutative polynomial $h=h^*$ in $\mathbf{x}$, which is understood as a self-adjoint element in $\mathbb{C}\langle\mathbf{u},\mathbf{z}\rangle$. Let a tracial state $\tau_0 \in TS(\mathcal{C}_R(\mathbf{z}))$ be given, and consider a tracial state $\tau_h \in TS(\mathcal{C}_R(\mathbf{u},\mathbf{z}))$ obtained as a solution to the so-called Schwinger-Dyson equation{\rm:} 
\begin{equation}\label{F8}
\tau_h\!\upharpoonright_{\mathcal{C}_R(\mathbf{Z})} = \tau_0, \quad 
(\tau_h\otimes\tau_h)\circ\partial_i(p) = \tau_h((D_i h) p), \quad 1 \leq i\leq n, 
\end{equation}
for every $p \in \mathbb{C}\langle\mathbf{u},\mathbf{z}\rangle$ {\rm(}$\subset \mathcal{C}_R(\mathbf{u},\mathbf{z})${\rm)}. The limit distributions of random matrix models studied in \cite{CollinsGuionnetSegala:AdvMath09} become such tracial states which also produce examples of orbital equilibrium tracial states in a suitable manner as follows.

\begin{example}\label{Ex7.1}\rm Assume that the above $h$ is of the form $h=\sum_{k=1}^l t_k q_k$ with monomials $q_k$ in $\mathbf{x}$ and sufficiently small coefficients $t_k$ and further that the above $\tau_0$ is given as the limit distribution of given deterministic matrices $\Xi(N) = \Xi_1(N)\sqcup\cdots\sqcup\Xi_n(N)$ as in \S\S4.2. Then \cite[Corollary 3.1]{CollinsGuionnetSegala:AdvMath09} shows that the `empirical tracial state' $f \in \mathcal{C}_R(\mathbf{u},\mathbf{z}) \mapsto \mathrm{tr}_N(f(\mathbf{V},\Xi(N)))$ converges to the unique tracial state $\tau_h$ determined by \eqref{F8} in the weak* topology as $N \to \infty$, almost surely when $\mathbf{V} = (V_i)_{i=1}^n$ is distributed under the Gibbs micro-emsemble $\mu_N^{(h,\Xi(N))}$ on $\mathrm{U}(N)^n$ defined in \S\S4.2. In particular, the empirical orbital tracial state $g \in \mathcal{C}_R(\mathbf{x}) \mapsto \mathrm{tr}_N(g((V_i\Xi_i(N)V_i^*)_{i=1}^n))$ converges to $\tau^h \in TS(\mathcal{C}_R(\mathbf{x}))$ induced from $\tau_h$ by the $*$-homomorphism $x_{ij} \in \mathcal{C}_R(\mathbf{x}) \mapsto u_i z_{ij} u_i^*\in \mathcal{C}_R(\mathbf{u},\mathbf{z})$ in the weak* topology as $N\to\infty$, almost surely when $(V_i)_{i=1}^n \in \mathrm{U}(N)^n$ is distributed under $\mu_N^{(h,\Xi(N))}$. Hence, by Proposition \ref{P4.2} with Remark \ref{R4.1} we see that $\tau^h$ is orbital equilibrium associated with $h$ under the hyperfiniteness assumption of every tracial state $\tau^h_i := \tau^h\!\upharpoonright_{\mathcal{C}_R(\mathbf{x}_i)}$  (this is the case when every $\Xi_i(N)$ is a singleton). This and \cite[Theorem 2.6]{HiaiMiyamotoUeda:IJM09} explain grounds for the final formula in the proof of \cite[Theorem 8.1]{CollinsGuionnetSegala:AdvMath09}. Moreover, it was shown in \cite[Theorem 7.1]{CollinsGuionnetSegala:AdvMath09} that $\lim_{N\to\infty}\frac{1}{N^2}\log Z_N^{(h,\Xi(N))}$ exists under the hypothesis that the $t_k$ are sufficiently small. This is nothing but 
\begin{align*}
%&
\pi_{\mathrm{orb},R}(h:(\tau^h_i)_{i=1}^n) %\\&\quad
= 
\lim_{N\to\infty}\frac{1}{N^2}\log
\int_{\mathrm{U}(N)^n} d\gamma_{\mathrm{U}(N)}^{\otimes n}(V_i)
\exp\bigl(-N^2 \mathrm{tr}_N(h((V_i \Xi_i(N) V_i^*)_{i=1}^n))\bigr),
\end{align*} 
and implies, by the proof of Proposition \ref{P4.2}, the following: 
$$
\chi_\mathrm{orb}(\tau^h) = \lim_{m\rightarrow\infty \atop \delta\searrow0}\liminf_{N\rightarrow\infty}\frac{1}{N^2}\log\gamma_{\mathrm{U}(N)}^{\otimes n}\big(\Gamma_\mathrm{orb}(\tau^h:\Xi(N)\,;N,m,\delta)\big).
$$ 
Furthermore, \cite[Theorem 7.1]{CollinsGuionnetSegala:AdvMath09} provides an expression of $\lim_{N\to\infty}\frac{1}{N^2}\log Z_N^{(h,\Xi(N))}$ combinatorially, and thus the orbital free entropy $\chi_\mathrm{orb}(\tau^h)$ admits a combinatorial expression.
\end{example}

Now, let $\tau_h\in TS(\mathcal{C}_R(\mathbf{u},\mathbf{z}))$ be as given in
\eqref{F8}. Via the GNS representation associated with $\tau_h$ we obtain a tracial $W^*$-probability space $(\mathcal{M},\tau)$, and the indeterminates $\mathbf{u}, \mathbf{z}$ give unitary random variables $\mathbf{U} = (U_i)_{i=1}^n$ and self-adjoint random multi-variables $\mathbf{Z} = \mathbf{Z}_1\sqcup\cdots\sqcup\mathbf{Z}_n$ with $\mathbf{Z} = (Z_{ij})_{j=1}^{r(i)}$, $1 \leq i \leq n$, in $(\mathcal{M},\tau)$. Write $X_{ij} := U_i Z_{ij} U_i^*$, $1 \leq i \leq n$, $1 \leq j \leq r(i)$, and set $\mathbf{X}_i := (X_{ij})_{j=1}^{r(i)}$, a self-adjoint multi-variable, for every $1 \leq i \leq n$ and $\mathbf{X} := \mathbf{X}_1\sqcup\cdots\sqcup\mathbf{X}_n$. Note here that $\tau^h$ in the above example is $\tau_\mathbf{X}^{(R)}$. We will compute the  liberation gradient $j(W^*(\mathbf{X}_i):W^*(\mathbf{X}_1,\dots,\hat{\mathbf{X}}_i,\dots,\mathbf{X}_n))$, $1 \leq i \leq n$, see \cite[\S5.4]{Voiculescu:AdvMath99}. In what follows, $\delta_i$ denotes the derivation of $W^*(\mathbf{X}_i)$ relative to $W^*(\mathbf{X}_1,\dots,\hat{\mathbf{X}}_i,\dots,\mathbf{X}_n)$ in the liberation theory (see \cite[\S5.3]{Voiculescu:AdvMath99}), and we define $\bar{\theta}(\sum_i a_i\otimes b_i) := \sum_i b_i a_i$ for every $\sum_i a_i\otimes b_i \in \mathcal{M}\otimes_\mathrm{alg}\mathcal{M}$.  

\begin{proposition}\label{P7.2} The liberation gradient $j(W^*(\mathbf{X}_i):W^*(\mathbf{X}_1,\dots,\hat{\mathbf{X}}_i,\dots,\mathbf{X}_n))$ becomes
$$
-U_i(D_i h)(\mathbf{X})U_i^* =  \bar{\theta}\circ\delta_i(h(\mathbf{X})) 
= \sum_{j=1}^{r(i)}\,[\bar{\theta}\circ\partial_{X_{ij}}(h(\mathbf{X})),X_i],
$$
where $\partial_{X_{ij}} = \partial_{X_{ij}:\mathbb{C}\langle X_{11},\dots,\hat{X}_{ij},\dots,X_{n r(n)}\rangle}$ is the free difference quotient associated with $X_{ij}$, $1 \leq j \leq r(i)$, $1 \leq i \leq n$, {\rm\cite[\S\S3.2]{Voiculescu:Survey}}.  
\end{proposition}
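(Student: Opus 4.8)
The plan is to recognize $-U_i(D_ih)(\mathbf X)U_i^*$ as the liberation gradient through its defining conjugate relation, and then to deduce the remaining two equalities from elementary identities for derivations. Recall (see \cite[\S\S5.3--5.4]{Voiculescu:AdvMath99}) that the liberation derivation $\delta_i$ is the densely defined derivation of $\mathbb{C}\langle\mathbf{X}\rangle$ (the $*$-algebra generated by $X_{11},\dots,X_{n r(n)}$) into $\mathcal{M}\otimes\mathcal{M}$ determined by $\delta_i(X_{ij})=X_{ij}\otimes1-1\otimes X_{ij}$ and $\delta_i(X_{kj})=0$ for $k\neq i$, and that $j(W^*(\mathbf{X}_i):W^*(\mathbf{X}_1,\dots,\hat{\mathbf{X}}_i,\dots,\mathbf{X}_n))$ is the unique $\xi\in L^2(W^*(\mathbf{X}),\tau)$ with $\tau(\xi m)=(\tau\otimes\tau)(\delta_i(m))$ for all $m\in\mathbb{C}\langle\mathbf{X}\rangle$. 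Since $-U_i(D_ih)(\mathbf{X})U_i^*\in\mathcal{M}$ is bounded, hence in $L^2$, it is enough to verify this relation for it; by linearity we may take $m=p(\mathbf{X})$ for a non-commutative polynomial $p$, and I write $\tilde{p}\in\mathbb{C}\langle\mathbf{u},\mathbf{z}\rangle$ for the element obtained from $p$ by the substitution $x_{ij}=u_iz_{ij}u_i^*$, so that $p(\mathbf{X})=\tilde{p}(\mathbf{U},\mathbf{Z})$.

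The computational heart is a change-of-variables lemma relating $\delta_i$ in the $\mathbf{X}$-picture to $\partial_i$ in the $(\mathbf{u},\mathbf{z})$-picture. Multiplying the first tensor leg on the right by $u_i^*$ and the second leg on the left by $u_i$ defines a $\mathbb{C}\langle\mathbf{u},\mathbf{z}\rangle$-bimodule endomorphism $\Phi_i$ of $\mathbb{C}\langle\mathbf{u},\mathbf{z}\rangle\otimes_{\mathrm{alg}}\mathbb{C}\langle\mathbf{u},\mathbf{z}\rangle$ (it multiplies the left leg on the right and the right leg on the left, so it commutes with the bimodule action), so $\Phi_i\circ\partial_i$ is again a derivation; its restriction to the copy of $\mathbb{C}\langle\mathbf{x}\rangle$ inside $\mathbb{C}\langle\mathbf{u},\mathbf{z}\rangle$ is determined by its values on the free generators, and from $\partial_i(u_iz_{ij}u_i^*)=u_i\otimes z_{ij}u_i^*-u_iz_{ij}\otimes u_i^*$ together with $u_iu_i^*=u_i^*u_i=1$ one gets $\Phi_i(\partial_i x_{ij})=1\otimes x_{ij}-x_{ij}\otimes1$ and $\Phi_i(\partial_i x_{kj})=0$ for $k\neq i$, i.e.\ $\delta_i(p(\mathbf{X}))=-(\Phi_i(\partial_i\tilde{p}))(\mathbf{U},\mathbf{Z})$. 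Applying $\tau\otimes\tau$, and writing out $\partial_i(u_i^*\tilde{p}u_i)=(u_i^*\tilde{p}u_i)\otimes1-1\otimes(u_i^*\tilde{p}u_i)+u_i^*\cdot(\partial_i\tilde{p})\cdot u_i$ term by term, the traciality of $\tau_h$ annihilates the first two summands and cyclically absorbs $u_i,u_i^*$ inside each copy of $\tau_h$ in the last one; this gives $(\tau\otimes\tau)(\delta_i(p(\mathbf{X})))=-(\tau_h\otimes\tau_h)(\partial_i(u_i^*\tilde{p}u_i))$.

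Now apply the Schwinger--Dyson equation \eqref{F8} with polynomial argument $u_i^*\tilde{p}u_i\in\mathbb{C}\langle\mathbf{u},\mathbf{z}\rangle$: $(\tau_h\otimes\tau_h)(\partial_i(u_i^*\tilde{p}u_i))=\tau_h\big((D_ih)\,u_i^*\tilde{p}u_i\big)=\tau_h\big(u_i(D_ih)u_i^*\,\tilde{p}\big)$ by traciality, and under the GNS identification this equals $\tau\big(U_i(D_ih)(\mathbf{X})U_i^*\,p(\mathbf{X})\big)$. Combining with the previous paragraph, $(\tau\otimes\tau)(\delta_i(p(\mathbf{X})))=\tau\big(-U_i(D_ih)(\mathbf{X})U_i^*\cdot p(\mathbf{X})\big)$ for every $p$, which is exactly the conjugate relation, whence $j(W^*(\mathbf{X}_i):W^*(\mathbf{X}_1,\dots,\hat{\mathbf{X}}_i,\dots,\mathbf{X}_n))=-U_i(D_ih)(\mathbf{X})U_i^*$. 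The equality with $\bar{\theta}\circ\delta_i(h(\mathbf{X}))$ follows from the lemma applied with $p=h$ and the identity $\bar{\theta}\big((aU_i^*)\otimes(U_ib)\big)=U_i(ba)U_i^*$ (recall $D_ih=\theta(\partial_ih)$). For the third expression one checks that $w\mapsto\sum_{j=1}^{r(i)}\big(\partial_{X_{ij}}(w)(X_{ij}\otimes1)-(1\otimes X_{ij})\partial_{X_{ij}}(w)\big)$ is a derivation of $\mathbb{C}\langle\mathbf{X}\rangle$ (each summand being $\partial_{X_{ij}}$ followed by left/right multiplication by a fixed tensor, both bimodule maps) which agrees with $\delta_i$ on generators, hence equals $\delta_i$; applying $\bar{\theta}$ and using $\bar{\theta}(\xi(X_{ij}\otimes1))=\bar{\theta}(\xi)X_{ij}$, $\bar{\theta}((1\otimes X_{ij})\xi)=X_{ij}\bar{\theta}(\xi)$ turns this into $\bar{\theta}\circ\delta_i(w)=\sum_{j=1}^{r(i)}[\bar{\theta}\circ\partial_{X_{ij}}(w),X_{ij}]$, which for $w=h(\mathbf{X})$ is the claimed identity.

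The step I expect to be the main obstacle is the change-of-variables lemma together with the traciality bookkeeping in the second paragraph: one has to match the bimodule structures on the $\mathbf{x}$-side and the $(\mathbf{u},\mathbf{z})$-side correctly and then massage $(\tau\otimes\tau)(\delta_i(p(\mathbf{X})))$ into exactly the shape $(\tau_h\otimes\tau_h)(\partial_i(u_i^*\tilde{p}u_i))$ to which \eqref{F8} applies verbatim; once this is arranged, the Schwinger--Dyson equation does the rest. A secondary point requiring care is pinning down the precise sign and ordering conventions in Voiculescu's definition of the liberation gradient, so that the conjugate relation used above is stated in its correct form.
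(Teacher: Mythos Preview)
Your argument is correct and follows essentially the same route as the paper: the change-of-variables identity (your $\Phi_i$-lemma) is exactly the paper's equation $\delta_i\circ\pi|_{\mathbb{C}\langle\mathbf{x}\rangle} = -\pi((1\otimes u_i)\partial_i(-)(u_i^*\otimes 1))$, and the traciality manipulation reducing to $(\tau_h\otimes\tau_h)(\partial_i(u_i^*\tilde p u_i))$ followed by Schwinger--Dyson is the paper's computation verbatim. The only substantive difference is the last equality: the paper merely cites Voiculescu's unpublished formula (via \cite[Equation (1)]{Dabrowski:JFA10}), whereas you supply a direct proof by checking that $w\mapsto\sum_j(\partial_{X_{ij}}(w)(X_{ij}\otimes1)-(1\otimes X_{ij})\partial_{X_{ij}}(w))$ is a derivation agreeing with $\delta_i$ on generators. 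One small point of presentation: the uniqueness characterization of $j_i$ lives in $L^2(W^*(\mathbf{X}),\tau)$, not in $L^2(\mathcal{M},\tau)$, so to conclude that your candidate \emph{is} $j_i$ (rather than merely projects to it) you need $-U_i(D_ih)(\mathbf{X})U_i^*\in W^*(\mathbf{X})$; this follows from your algebraic identity $-U_i(D_ih)(\mathbf{X})U_i^*=\bar\theta\circ\delta_i(h(\mathbf{X}))\in\mathbb{C}\langle\mathbf{X}\rangle$, so it is cleanest to record that identity before invoking the conjugate relation.
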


We should remark that the formula above is quite similar to \cite[Proposition 5.10, Corollary 8.3]{Voiculescu:AdvMath99}. This means that the work \cite{CollinsGuionnetSegala:AdvMath09} should have a deep connection to Voiculescu's liberation theory \cite{Voiculescu:AdvMath99}. The proof below is short enough, and hence we do give it for the reader's convenience.
 
\begin{proof} If $y \in \mathbb{C}\langle\mathbf{z}\rangle \subset \mathbb{C}\langle\mathbf{u},\mathbf{z}\rangle$, then $\partial_i (u_j y u_j^*) 
= \delta_{ij}(1\otimes u_i^*)(1\otimes u_i y u_i^* - u_i y u_i^* \otimes 1)(u_i\otimes1)$. Thus we have 
\begin{equation}\label{F9}
\delta_i\circ\pi\!\upharpoonright_{\mathbb{C}\langle\mathbf{x}\rangle} = - \pi((1\otimes u_i)\partial_i(\,-\,)(u_i^*\otimes1)),
\end{equation}
where we explicitly write the GNS representation $\pi : \mathcal{C}_R(\mathbf{u},\mathbf{z}) \to \mathcal{M}$ associated with $\tau_h$. For each $1\le i\le n$, the liberation gradient $j_i :=  j(W^*(\mathbf{X}_i):W^*(\mathbf{X}_1,\dots,\hat{\mathbf{X}}_i,\dots,\mathbf{X}_n))$ is determined as a unique element in $L^2(W^*(\mathbf{X}),\tau\!\upharpoonright_{W^*(\mathbf{X})})$ (which naturally sits inside the bigger $L^2$-space $L^2(\mathcal{M},\tau)$) with the equation:
\begin{equation}\label{F10}
\tau(j_i m) = (\tau\otimes\tau)\circ\delta_i(m), \qquad m \in \mathbb{C}\langle\mathbf{X}\rangle. 
\end{equation}
Hence, for each non-commutative polynomial $p$ in $\mathbf{x}$ we observe by \eqref{F9} that 
\begin{gather*}
(\tau\otimes\tau)\circ\delta_i(p(\mathbf{X})) 
= -(\tau_h\otimes\tau_h)((1\otimes u_i)\partial_i(p)(u_i^*\otimes 1)) = -(\tau_h\otimes\tau_h)((u_i^*\otimes 1)\partial_i(p)(1\otimes u_i)), \\
(u_i^*\otimes1)\partial_i(p)(1\otimes u_i) 
= 
\partial_i(u_i^* p u_i) + 1\otimes u_i^* p u_i - u_i^* p u_i\otimes1, 
\end{gather*}
and hence by the Schwinger-Dyson equation 
\begin{equation*}
(\tau_h\otimes\tau_h)((u_i^*\otimes 1)\partial_i(p)(1\otimes u_i)) = (\tau_h\otimes\tau_h)\circ\partial_i(u_i^* p u_i) 
=
\tau_h((u_i(D_i h)u_i^*)p). 
\end{equation*}
Therefore, it follows that identity \eqref{F10} is equivalent to $\tau(j_i p(\mathbf{X})) =-\tau_h((u_i(D_i h)u_i^*)p)$ for every non-commutative polynomial $p$ in $\mathbf{x}$, $1\leq i\leq n$. It is easy to confirm that the restriction of $\pi\circ\mathrm{Ad}\,u_i\circ D_i$ to the unital $*$-subalgebra generated by $\mathbf{x}$ is exactly $-\bar{\theta}\circ\delta_i\circ\pi$. Thus, we arrive at $j_i = -\pi(\mathrm{Ad}u_i(D_i h)) = \bar{\theta}\circ\delta_i(h(\mathbf{X})) \in W^*(\mathbf{X})$. The last equality in the desired formula immediately follows from Voiculescu's unpublished work (see \cite[Equation (1) in p.3665]{Dabrowski:JFA10}).
\end{proof} 

\section*{Acknowledgment} 

The second-named author visited at the Fields Institute in July 2013, where part of the present work was done. He would like to express his sincere thanks to the organizers of the free probability program for inviting him and to the Fields institute for providing comfortable atmosphere. One of the referees kindly adviced us to make the presentation more attractive, and the other gave many fruitful and helpful comments which enable us to improve, especially, Remarks \ref{R4.6}\,(2)--(4) and Proposition \ref{P7.2}. We would like to thank the referees for their contributions.

}

\end{document}